\newcommand{\tp}{\textrm{tp}}
\newcommand{\icl}{\textrm{icl}}
\newcommand{\z}{\overline}
\newcommand{\f}{\mathfrak}
\renewcommand{\u}{\underline}
\theoremstyle{plain}
\newtheorem{thm}{Theorem}
\newtheorem{theorem}[thm]{Theorem}
\newtheorem{lemma}[thm]{Lemma}
\newtheorem{cor}[thm]{Corollary}
\newtheorem{prop}[thm]{Proposition}
\newtheorem{fact}[thm]{Fact}
\numberwithin{thm}{section}
\numberwithin{subcase}{case}
\theoremstyle{definition}
\newtheorem{defn}[thm]{Definition}
\newtheorem{remark}[thm]{Remark}
\newtheorem{notation}[thm]{Notation}
\newtheorem*{theorem*}{Theorem}
\def\dnf{\mathrel{\raise0.2ex\hbox{\ooalign{\hidewidth
$\vert$\hidewidth\cr\raise-0.9ex\hbox{$\smile$}}}}}
\def\forks{\mathrel{\raise0.2ex\hbox{\ooalign{\hidewidth
$\cancel\vert$\hidewidth\cr\raise-0.9ex\hbox{$\smile$}}}}}
\def\dindep{\mathrel{\raise0.2ex\hbox{\ooalign{\hidewidth
$\vert\rlap{${}^d$}$\hidewidth\cr\raise-0.9ex\hbox{$\smile$}}}}}
\def\ndindep{\mathrel{\raise0.2ex\hbox{\ooalign{\hidewidth
$\cancel\vert\rlap{${}^d$}$\hidewidth\cr\raise-0.9ex\hbox{$\smile$}}}}}
\newcommand{\proves}{\vdash}
\newcommand{\finsubset}{\subseteq_\textrm{Fin}}
\newcommand{\Kfin}{K_{\z{\alpha}}}
\newcommand{\Sfin}{S_{\z{\alpha}}}
\begin{document}
\title{The theories of Baldwin-Shi hypergraphs and their atomic models}

\author{Danul K. Gunatilleka\thanks{Partially supported
		by NSF grant DMS-1308546.}}
\date{\vspace{-5ex}}

\newcommand{\Addresses}{{% additional braces for segregating \footnotesize
		\bigskip
		\footnotesize
		
		\textsc{Department of Mathematics, University of Maryland at College Park}\par\nopagebreak
		\textit{E-mail address}: \texttt{danulg@math.umd.edu}
		
		%\medskip
		
		%M.~Dane (Corresponding author), \textsc{Atmospheric Research Station,
		%	Pala Lundi, Fiji}\par\nopagebreak
		%\textit{E-mail address}, M.~Dane: \texttt{DaneMark@@ffr.choice}
		
		%\medskip
		
		%J.~Jones, \textsc{Department of Philosophy, Freedman College,
		%	Periwinkle, Colorado 84320}\par\nopagebreak
		%\textit{E-mail address}, J.~Jones: \texttt{id739e@@oseoi44 (Bitnet)}
		
	}}
	
	\maketitle
	
	%\renewcommand\Authands{ and }
	%\date{\vspace{-5ex}}
	\maketitle
	
	\begin{abstract}
		We show that the quantifier elimination result for the Shelah-Spencer almost sure theories of sparse random graphs $G(n,n^{-\alpha})$ given by Laskowski in \cite{Las1} extends to their various analogues. The analogues will be obtained as theories of generic structures of certain classes of finite structures with a notion of strong substructure induced by rank functions and we will call the generics Baldwin-Shi hypergraphs. In the process we give a method of constructing extensions whose `relative rank' is negative but arbitrarily small in context. We give a necessary and sufficient condition for the theory of a Baldwin-Shi hypergraph to have atomic models. We further show that for certain well behaved classes of theories of Baldwin-Shi hypergraphs, the existentially closed models and the atomic models correspond. 
	\end{abstract}

	\section{Introduction}\label{sec:intro}
	Fix a finite relational language $L$ where each relation symbol has arity at least $2$ and let $K_L$ be the class of finite structures where each relation symbols is interpreted reflexively and symmetrically. Fix a function $\z\alpha:L\rightarrow (0,1]$ \textit{with} the additional restriction that if \textit{all} the relation symbols are $2$-ary then it is not the case $\z\alpha(E)=1$ for each $E\in L$. Define a \textit{rank function} $\delta:K_L\rightarrow \mathbb{R}$ by $\delta(\f{A})=|A|-\sum_{E\in L}\z\alpha(E)|E^{\f{A}}|$ where $|E^{\f{A}}|$ is the number of subsets of $A$ on which $E$ holds. Let $\Kfin=\{\f{A}\in K_L:\delta(\f{A'})\geq 0 \text{ for all }\f{A'\subseteq A}\}$. Given $\f{A,B}\in\Kfin$, we say that $\f{A\leq B}$ if and only if $\f{A\subseteq B}$ and $\delta(\f{A})\leq \delta(\f{A'})$ for all $\f{A\subseteq A'\subseteq B}$. The class $(\Kfin,\leq)$ forms a Fra\"{i}ss\'{e} class, i.e. $\Kfin$ has amalgamation and joint embedding under $\leq$. In \cite{BdSh}, Baldwin and Shi initiated a systematic study of the \textit{generic structures} constructed from various sub-classes $K^*\subseteq \Kfin$ where $(K^*,\leq)$ forms a Fra\"{i}ss\'{e} class. In particular they obtained the stability of the theory of the generic for $(\Kfin,\leq)$. We call the generic for $(\Kfin,\leq)$ the \textit{Baldwin-Shi hypergraph for $\z\alpha$}. 
	
	In \cite{BdShelah1}, Baldwin and Shelah showed that the results regarding almost sure theories of \textit{graphs} studied by Shelah and Spencer in \cite{SpencerShelah} extended to their natural \textit{hypergraph} counterparts. They further connected two disparate lines of research when they showed that these almost sure theories corresponded to certain theories of Baldwin-Shi hypergraphs, allowing us to establish the Baldwin-Shi hypergraphs as analogues of the almost sure theories. These results of \cite{BdShelah1} hinge on a $\forall\exists\forall$-axiomatization of the resulting theory. Assuming that the values of $\z\alpha(E)$ as $E$ ranges through $L$ is linearly independent over $\mathbb{Q}$, Laskowski in \cite{Las1}, provided a simpler $\forall\exists$-axiomatization of the corresponding theories of Baldwin-Shi hypergraphs. He also obtained quantifier elimination result down to \textit{chain minimal formulas} (see definition \ref{defn:ExtensionFormula})  Later, in \cite{IkKiTs} Ikeda, Kikyo and Tsuboi showed that the $\forall\exists$-axiomatization, denoted by $\Sfin$, holds for \textit{all} theories of Baldwin-Shi hypergraphs. However their methods did not establish a quantifier elimination result in the spirit of Laskowski.  
	
	In this paper we begin by extending Laskowski's quantifier elimination for all $\Sfin$. We then isolate properties of $\z\alpha$; \textit{coherence}, i.e. the linear dependence of $\{\z\alpha(E):E\in L\}$, and \textit{rationality}, i.e. $\z\alpha(E)$ is rational for all $E\in L$, that play a role in determining properties of $\Sfin$. We show that coherence is a necessary and sufficient condition for the existence of atomic models and that rationality is a necessary and sufficient condition to guarantee that the atomic and existentially closed models correspond.   
	
	We begin in Section \ref{sec:prelim} by introducing preliminary notions that we will be using throughout this paper. In Section \ref{sec:ExistThm} we deal primarily with finite structures. One of the key results is Theorem \ref{thm:OmitPrelim2}, which yields the existence of certain finite structures over some fixed finite structure that witness a very small drop in rank. This theorem plays a central role in many results throughout this paper and in \cite{DG2}. Another key result is Theorem \ref{thm:AnnhilConstruc} which establishes the existence of rank $0$ extensions of finite substructures given coherence of $\z\alpha$. 
	
	The key result of Section \ref{sec:QuantElim}, which is mainly aimed at generalizing the results of \cite{Las1}, is Theorem \ref{thm:QuantElim}. It states that $\Sfin$ admits quantifier elimination down to the level of \textit{chain minimal extension formulas}. It also yields the completeness of $\Sfin$ and a characterization of algebraically closed sets for $\Sfin$ as stated in Theorem \ref{lem:ClosEqui}. We end the section with some basic facts about types over (algebraically) closed sets that will be useful throughout. 
	
	Section \ref{sec:AtomECMod} is devoted to a study of the atomic models of $\Sfin$. In Theorem \ref{thm:ExistenceOfAtomic} of Section \ref{sec:AtomECMod} we establish that coherence of $\z\alpha$ is a necessary and sufficient condition for the corresponding $\Sfin$ to have atomic models. In Theorem \ref{thm:RationalAlphaAndCoherence} we show that rationality of $\z\alpha$ is equivalent to every model of the $\Sfin$ being isomorphically embeddable in an atomic model of $\Sfin$. We end with Appendix A which contains a collection of well known number theoretic results that is used throughout.  
	
	The author would like to thank Chris Laskowski for all his help and guidance in the preparation of this paper.
	
\section{Preliminaries}\label{sec:prelim}

We work throughout with a finite relational language $L$ where \textit{each relation symbol $E\in L$ is at least binary}. Let $ar:L\rightarrow \{n:n\in\omega \text{ and } n\geq 2\}$ be a function that takes each relation symbol to its arity. This section is devoted to introducing notation, definitions and some facts about the rank function $\delta$ (see Definition \ref{defn:ClassKfin}) that will be useful throughout. The results in this section are well known or follow from routine calculations involving $\delta$.

\subsection{Some general notions}\label{subsec:GenNotions}

We begin with some notation.

\begin{notation}  Fraktur letters will denote $L$-structures. Their Latin counterparts will, as we shall see, denote either the structure or the underlying set. Let $\f{Z}$ be an $L$-structure and let $X,Y\subseteq Z$. We will adapt the practice of writing $XY$ for $X\cup Y$. Since we are in a finite relational language $X, Y, XY$ will have a natural $L$-structures associated with them, i.e. the $L$-structures with universe $X, Y, XY$ that are substructures of $\f{Z}$, respectively. By a slight abuse of notation we write $X,Y, XY$ for these $L$-structures. It will be clear by context what the notation refers to. We write $X\finsubset Z$, $\f{X\finsubset Z}$ to indicate that $|X|$ is finite.  
\end{notation}

\begin{notation} We will use $\emptyset$ to denote the unique $L$-structure with no elements. Further given $L$-structures $\f{X,Y}$, there is a uniquely determined $L$-structure whose universe is $X\cap{Y}$. We denote this structure by $\f{X\cap Y}$. 
\end{notation}

\begin{notation}
	We let $K_L$ denote the class of all finite $L$ structures $\f{A}$ (including the empty structure), where each $E\in{L}$ is interpreted symmetrically and irrelexively in $A$: i.e. $\f{A}\in K_L$ if and only if for every $E\in L$, if $\f{A}\models E(\z{a})$, then  $\z{a}$ has no repetitions and $\f{A}\models E(\pi(\z{a}))$ for every permutation $\pi$ of $\{ 0,\ldots,n-1\}$. By $\z{K_L}$ we denote the class of $L$-structures whose finite substructures all lie in $K_L$, i.e. $\z{K_L}=\{\f{M}:\f{M}\text{ an } L-\text{structure and if } \f{A\finsubset M}, \text{ then }\f{A}\in K_L \}$.
\end{notation}

\begin{notation}
	Fix any $E\in{L}$. Given $\f{A}\in K_L$, $N_{E}(\f{A})$ will denote the \textit{number of distinct subsets} of $A$ on which $E$ holds positively inside of $\f{A}$. The set of such subsets will be denoted by $E^{\f{A}}$. Consider an $L$-structure whose finite substructures are all in $K_L$ and let $A,B,C\subseteq Z$ be finite. Now $N_{E}({A},{B})$ will denote the \textit{number of distinct subsets} of ${AB}$ on which $E$ holds with \textit{at least one element from} $A$ and \textit{at least one element from} $B$ inside of ${AB}$. We further let $N_{E}({A},{B},{C})$ denote the number of \textit{distinct subsets} of $A\cup{B}\cup{C}$ on which $E$ holds with \textit{at least one element} from $A$ and \textit{at least one element} from $C$. 
\end{notation}

We now introduce the class $\Kfin$ as a subclass of $K_L$.

\begin{defn}\label{defn:ClassKfin}
	Fix a function ${\z\alpha}:L\rightarrow (0,1]$ \textit{with} the property that if all of the relation symbols in $L$ have arity $2$, then it is not the case that $\z\alpha(E)=1$ for all $E\in L$.  Define a function $\delta:K_L\rightarrow \mathbb{R}$ by $\delta(\f{A})= \vert A \vert-\sum_{E\in{L}}{{\z\alpha}(E)N_{E}(\f{A})}$ for each $\f{A}\in K_L$. We let $\Kfin=\{\f{A}  \vert \delta(\f{A'})\geq{0}\text{ for all }\f{A'}\subseteq{\f{A}}\}.$	 
\end{defn}

We adopt the convention $\emptyset\in K_L$ and hence $\emptyset\in \Kfin$ as $\delta(\emptyset)=0$. It is easily observed that $\Kfin$ is closed under substructure. Further the rank function $\delta$ allows us to view both $K_L$ and $\Kfin$ as collections of weighted hypergraphs. We proceed to use the rank function to define a notion of strong substructure $\leq$. Typically the notion of $\leq$ is usually defined on $\Kfin\times \Kfin$. However, we define the concept on the broader class $K_L\times K_L$. This will allow us to make the exposition significantly simpler via Remark \ref{rmk:ConstrVerifSimpl}. 

\begin{defn}\label{defn:defStrong}
	Given $\f{A, B}\in{K_L}$ with $\f{A\subseteq B}$, we say that $\f{A}$ \textit{is strong in} $\f{B}$, denoted by $\f{A}\leq{\f{B}}$ if and only if $\f{A\subseteq{B}}$ and $\f{\delta{(A)}\leq{\delta{(A')}}}$ for all $\f{A\subseteq{A'}\subseteq{B}}$. 
\end{defn}

\begin{remark}\label{fact:ExtSmthFraClass}
	The relation $\leq$ on $K_L \times K_L$ is reflexive, transitive and has the property that given $\f{A,B,C}\in K_L$, if $\f{A\leq{C}}$, $\f{B\subseteq{C}}$ then $\f{A\cap{B}\leq{B}}$ (use $(2)$ of Fact \ref{lem:MonoRelRankOvBase}). The same statement holds true if we replace $K_L$ by $\Kfin$ in the above. Further for any given $\f{A}\in\Kfin$, $\emptyset\leq{\f{A}}$.   
\end{remark}  

\begin{remark}\label{rmk:ConstrVerifSimpl}
	Let $\f{A}\in \Kfin$, $\f{B}\in K_L$ with $\f{A\subseteq B}$. Using $(2)$ of Fact \ref{lem:MonoRelRankOvBase}, we easily obtain that if $\f{A\leq B}$, then $\f{B}\in\Kfin$. 
\end{remark}

\begin{defn}\label{defn:PostiveRankInfStructures}
	By $\z\Kfin$ we denote the class of all $L$-structures whose finite substructures are all in $\Kfin$, i.e. $\z{\Kfin}=\{\f{M}:\f{M}\text{ an } L-\text{structure and if } \f{A\finsubset M}, \text{ then }\f{A}\in \Kfin\}$. 
\end{defn}

The following definition extends the notion of strong substructure to structures in $\z{K_L}$:

\begin{defn}
	Let $\f{X}\in\z{K_L}$. For $\f{A\finsubset{X}}$, $\f{A}$ \textit{is strong in} $\f{X}$, denoted by $\f{A\leq X}$, if $\f{A\leq B}$ for all $\f{A\subseteq B \finsubset Z}$. Given $\f{A}'\in K_L$ an embedding $f:\f{A'\rightarrow{\f{X}}}$ is called a \textit{strong embedding} if $f(\f{A'})$ is strong in $\f{X}$.
\end{defn}

\begin{defn}\label{defn:BasicDel2} 
	Let $n$ be a positive integer. A set $\{\f{B}_i:i<n\}$ of elements of $\Kfin$ is disjoint over $\f{A}$ if $\f{A}\subseteq{\f{B}_i}$ for each $i<n$ and $B_i\cap{B_j}=A$ for $i<j<n$. If $\{\f{B}_i:i<n\}$ is disjoint over $\f{A}$, then $\f{D}$ is a \textit{join} of $\{\f{B}_i:i<n\}$ if the universe $D=\bigcup\{B_i:i<n\}$ and $\f{B}_i\subseteq{\f{D}}$ for all $i$. A join is called the \textit{free join}, which we denote by $\oplus_{i<n}\f{B}_i$ if there are no additional relations, i.e. $E^\f{D}=\bigcup\{E^{\f{B}_i}:i<n\}$ for all $E\in{L}$. In the case $n=2$ we will use the notation $\f{B}_0\oplus_{\f{A}}\f{B}_1$ for $\oplus_{i<2}\f{B}_i$. We note that there are obvious extension of these notions to $K_L$, $\z{K_L}$, $\z\Kfin$ and to infinitely many structures $\{\f{X}_i:i<\kappa\}$ being disjoint/joined/freely joined over some fixed $\f{Y}\subseteq\f{X}_i$ for each $i<\kappa$. 
\end{defn}

\begin{fact}\label{lem:FullAmalg1}
	If $\f{B, C}\in{\Kfin}$, $\f{A=B\cap{C}}$, and $\f{\f{A\leq{B}}}$, then $\f{B\oplus_{A}C}\in{\Kfin}$ and $\f{C\leq{B\oplus_{A}C}}$. 
\end{fact}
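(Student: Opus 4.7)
The plan is to first establish the key additivity identity for $\delta$ across a free join: for any $\f{U},\f{V}\in K_L$ sharing a common substructure $\f{A}$ with $U\cap V=A$, since the free join introduces no new relations, every $E$-subset of $U\cup V$ lies entirely in $U$ or entirely in $V$, and those lying in $A$ are counted in both. Therefore $N_E(\f{U}\oplus_{\f{A}}\f{V})=N_E(\f{U})+N_E(\f{V})-N_E(\f{A})$ for each $E\in L$, and combining with $|U\cup V|=|U|+|V|-|A|$ yields the identity
\[
\delta(\f{U}\oplus_{\f{A}}\f{V})=\delta(\f{U})+\delta(\f{V})-\delta(\f{A}).
\]

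Next, to prove $\f{C}\leq \f{B}\oplus_{\f{A}}\f{C}$, I would take an arbitrary $\f{D}$ with $\f{C}\subseteq\f{D}\subseteq\f{B}\oplus_{\f{A}}\f{C}$ and set $\f{D'}:=\f{D}\cap\f{B}$. Because $\f{D}$ is a substructure of a free join over $\f{A}$ and contains $\f{C}$, it is itself the free join $\f{D}=\f{D'}\oplus_{\f{A}}\f{C}$ (no $E$-relations cross $D'\setminus A$ and $C\setminus A$). Applying the identity above gives $\delta(\f{D})-\delta(\f{C})=\delta(\f{D'})-\delta(\f{A})$. The hypothesis $\f{A}\leq\f{B}$ combined with $\f{A}\subseteq\f{D'}\subseteq\f{B}$ forces the right-hand side to be non-negative, so $\delta(\f{C})\leq\delta(\f{D})$, which is exactly $\f{C}\leq\f{B}\oplus_{\f{A}}\f{C}$.

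Finally, for $\f{B}\oplus_{\f{A}}\f{C}\in\Kfin$, observe that the free join lies in $K_L$ because symmetry and irreflexivity of each $E$ are inherited from $\f{B}$ and $\f{C}$. Since $\f{C}\in\Kfin$ and we have just shown $\f{C}\leq\f{B}\oplus_{\f{A}}\f{C}$, Remark \ref{rmk:ConstrVerifSimpl} applies directly and yields $\f{B}\oplus_{\f{A}}\f{C}\in\Kfin$.

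The only step that requires genuine care is the additivity identity and the accompanying observation that a substructure of a free join over $\f{A}$ which contains $\f{C}$ is again a free join over $\f{A}$; both rest on the purely combinatorial fact that in a free join no relation straddles the two sides outside of $\f{A}$. Everything else is unpacking the definitions of $\leq$ and $\Kfin$, so I do not anticipate any serious obstacle.
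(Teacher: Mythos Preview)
Your argument is correct. The paper does not actually give a proof of this statement: it is recorded as a ``Fact'' and left unproved, being a standard and well-known property of free amalgamation with respect to the rank function $\delta$. Your proof is precisely the expected one---establish the additivity identity $\delta(\f{U}\oplus_{\f{A}}\f{V})=\delta(\f{U})+\delta(\f{V})-\delta(\f{A})$, use it to reduce $\f{C}\leq\f{B}\oplus_{\f{A}}\f{C}$ to the hypothesis $\f{A}\leq\f{B}$, and then invoke Remark~\ref{rmk:ConstrVerifSimpl} for membership in $\Kfin$---and the verification that an intermediate $\f{D}\supseteq\f{C}$ is itself a free join $\f{D'}\oplus_{\f{A}}\f{C}$ is handled correctly.
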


We now turn our attention towards constructing the generic structure for $(\Kfin,\leq)$.

\begin{defn}\label{defn:GenericStr}
	A countable structure $\f{M}\in\z{\Kfin}$ is said to be the generic for $(\Kfin,\leq)$ if \begin{enumerate}
		\item $\f{M}$ is the union of an $\omega$-chain $\f{A}_0\leq\f{A}_1\leq\ldots$ with each $\f{A}_i\in \Kfin$. 
		\item If $\f{A,B}\in\Kfin$ with $\f{A\leq B}$ and $\f{A\leq{M}}$, then there is $\f{{B'}\leq {M}}$ such that $\f{{B}\cong_{A}{B'}}$. 
	\end{enumerate}
\end{defn}

\begin{fact}\label{fact:ExistGenStr}
	$(\Kfin,\leq)$ is a Fra\"{i}ss\'{e} class (i.e. $(\Kfin,\leq)$ satisfies joint embedding and amalgamation with respect to $\leq$) and a generic structure for $(\Kfin,\leq)$ exists and is unique up to isomorphism.
\end{fact}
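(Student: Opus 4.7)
The plan is to verify the Fraïssé-class axioms for $(\Kfin,\leq)$ and then invoke the standard Fraïssé construction/uniqueness argument, adapted to the strong-substructure setting.

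First I would check the class-theoretic conditions. Hereditary property under $\leq$-substructure is immediate from Definition \ref{defn:ClassKfin}, since $\Kfin$ is closed under arbitrary substructure. Countability up to isomorphism follows because $L$ is a finite relational language, so there are only countably many isomorphism types of finite $L$-structures. The two substantive conditions are joint embedding (JEP) and amalgamation (AP) with respect to $\leq$. For AP, given $\f{A},\f{B},\f{C}\in\Kfin$ with $\f{A}\leq\f{B}$ and $\f{A}\leq\f{C}$, relabel so that $B\cap C=A$; then Fact \ref{lem:FullAmalg1} hands us $\f{D}=\f{B}\oplus_{\f{A}}\f{C}\in\Kfin$ with $\f{C}\leq\f{D}$, and applying the symmetric version (swapping the roles of $\f{B}$ and $\f{C}$) gives $\f{B}\leq\f{D}$ as well. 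JEP is the special case $\f{A}=\emptyset$, using $\emptyset\leq\f{A}$ for every $\f{A}\in\Kfin$ from Remark \ref{fact:ExtSmthFraClass}.

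For existence of the generic, I would carry out the standard chain construction. Enumerate (with infinite repetition) a countable list of all isomorphism types of pairs $(\f{A},\f{B})$ with $\f{A}\leq\f{B}$ in $\Kfin$. Build an $\omega$-chain $\f{A}_0\leq\f{A}_1\leq\cdots$ starting from $\f{A}_0=\emptyset$. At stage $n+1$, consider the $n$-th pair $(\f{A},\f{B})$ on the list together with a choice of strong embedding $f:\f{A}\to\f{A}_n$; by AP applied along $f$, amalgamate $\f{A}_n$ and $\f{B}$ over $\f{A}$ to produce $\f{A}_{n+1}\in\Kfin$ with $\f{A}_n\leq\f{A}_{n+1}$ and a copy of $\f{B}$ over $f(\f{A})$ strong in $\f{A}_{n+1}$. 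Using a bookkeeping device to run through all finitely many strong embeddings $\f{A}\to\f{A}_n$ as $n$ grows, the union $\f{M}=\bigcup_n\f{A}_n$ lies in $\z\Kfin$ and satisfies condition (2) of Definition \ref{defn:GenericStr}, since any $\f{A}\leq\f{M}$ with $\f{A}\in\Kfin$ must be contained in some $\f{A}_n$ and any extension $\f{A}\leq\f{B}$ is scheduled to be realized at some later stage.

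For uniqueness, suppose $\f{M},\f{N}$ are both generics. I would run a back-and-forth between finite strong substructures: build an increasing sequence of partial isomorphisms $f_i:\f{P}_i\to\f{Q}_i$ with $\f{P}_i\leq\f{M}$, $\f{Q}_i\leq\f{N}$, using the extension property (2) in Definition \ref{defn:GenericStr} at each step to enlarge the domain to cover a prescribed element of $\f{M}$ (and symmetrically for $\f{N}$). The main subtlety is that to apply (2) one needs a pair $\f{P}_i\leq\f{P}$ in $\Kfin$, which is obtained by taking the $\leq$-closure in $\f{M}$ of $\f{P}_i$ together with the next element to be absorbed; the existence of such finite closures is standard given submodularity of $\delta$ from Fact \ref{lem:MonoRelRankOvBase}. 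The limit isomorphism $\bigcup_i f_i$ witnesses $\f{M}\cong\f{N}$.

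The main obstacle is really just the amalgamation step, and that is essentially done for us by Fact \ref{lem:FullAmalg1}; everything else is bookkeeping in the Fraïssé-style construction and back-and-forth, plus the mild point that finite $\leq$-closures exist in any $\f{X}\in\z\Kfin$ so that the extension axiom can be invoked inside $\f{M}$ and $\f{N}$.
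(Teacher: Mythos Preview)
The paper states this result as a \textbf{Fact} without proof, so there is no argument in the paper to compare against; your sketch is essentially the standard proof one would supply, and the use of Fact~\ref{lem:FullAmalg1} for amalgamation and Remark~\ref{fact:ExtSmthFraClass} for $\emptyset\leq\f{A}$ is exactly right.

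One point deserves correction. In your final paragraph you assert that ``finite $\leq$-closures exist in any $\f{X}\in\z\Kfin$,'' and you justify this via submodularity. This is false in general: the paper devotes much of Section~\ref{sec:AtomECMod} to distinguishing models of $\Sfin$ (all of which lie in $\z\Kfin$) that have finite closures from those that do not, and Lemma~\ref{lem:CoherenceWithIrrational} actually exhibits structures in $\z\Kfin$ where closures behave badly. Submodularity alone does not force $\icl_{\f{X}}(A)$ to be finite. Fortunately your back-and-forth does not need this. Since both $\f{M}$ and $\f{N}$ are generics, each is by Definition~\ref{defn:GenericStr}(1) the union of a strong chain $\f{A}_0\leq\f{A}_1\leq\cdots$; given $\f{P}_i\leq\f{M}$ and a new point $m\in M$, simply choose $n$ large enough that $\f{P}_i\cup\{m\}\subseteq\f{A}_n$ and set $\f{P}=\f{A}_n$. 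Then $\f{P}_i\leq\f{P}$ (since $\f{P}_i\leq\f{M}$ and $\f{P}\finsubset\f{M}$) and $\f{P}\leq\f{M}$ (by transitivity along the chain), so the extension property applies directly. With this adjustment your argument is complete.
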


This justifies the following definition:

\begin{defn}
	For a fixed $\z\alpha$ we call the generic for $(\Kfin,\leq)$ the \textit{Baldwin-Shi hypergraph for $\z\alpha$}.
\end{defn}

\subsection{Closed sets}\label{subsec:ClosedSets}

In this section we generalize the notion of strong substructure to substructures of arbitrary size by introducing the notion of a closed set. This will provide us with a useful tool for analyzing the various theories of Baldwin-Shi hypergraphs. 

\begin{defn}\label{defn:MinPair}
	Let $\f{A,B}\in{K_L}$. Now $\f{(\f{A,B})}$ is a \textit{minimal pair} if and only if $\f{A\subseteq{B}}$, $\f{A\leq{C}}$ for all $\f{A\subseteq{C}\subset{B}}$ but $\f{A\nleq{B}}$.
\end{defn}

Note that  $\f{(\f{A,B})}$ is a minimal pair if and only if $\f{A\subseteq{B}}$, $\f{\delta{(A)}\leq{\delta{(C)}}}$ for all $\f{A\subseteq{C}\subset{B}}$ but $\f{\delta(\f{B})<\delta{(A)}}$. 

\begin{defn}\label{defn:ClosedSet}
	Let $\f{Z}\in{\z{K_L}}$ and ${X\subseteq{Z}}$. We say ${X}$ \textit{is closed in} $\f{Z}$ if and only if for all ${A\finsubset{X}}$, if ${(\f{A,B})}$ is a minimal pair with ${B\subseteq{Z}}$, then ${B\subseteq{X}}$.      
\end{defn}

\begin{remark}\label{rmk:Closed=StrongForFinite}
	As any $\f{A,B,C}\in K_L$ with $\f{A\leq{C}}$ and $\f{B\subseteq{C}}$ satisfies $\f{A\cap{B}\leq{B}}$ (see Remark \ref{fact:ExtSmthFraClass}) an easy argument yields that given $\f{Z}\in K_L$ and $\f{A\finsubset Z}$, $\f{A\leq Z}$ if and only if $\f{A}$ is closed in $\f{Z}$.  
\end{remark}

It is immediate from the above definition that any $\f{Z}\in{\z{K_L}}$, $Z$ is closed in $\f{Z}$ and that the intersection of a family of closed sets of $\f{Z}$ is again closed. These observations justify the following definition: 

\begin{defn}
	Let $\f{Z}\in\z{K_L}$ and ${X\subseteq{Z}}$. The \textit{intrinsic closure} of ${X}$ in ${Z}$, denoted by ${\icl_{\f{Z}}(X)}$ is the smallest set ${X'}$ such that ${X\subseteq{X'}\subseteq{Z}}$ and ${X'}$ is closed in ${Z}$.       
\end{defn}

\subsection{Some basic properties of the rank function}

We start exploring the rank function $\delta$ in more detail. %There will be situations, for example in Section \ref{sec:ExistThm}, where we are guaranteed the structures we are interested in actually lies in $K_L$, but we wish to establish some other property about it: for example that it is in $\Kfin$. Thus it will be more convenient to explore the properties of the rank function in this broader context $K_L$.

\begin{defn}\label{defn:relRank}
	Let $\f{Z}\in \z{K_L}$ and let ${A, B}\finsubset{Z}$. Now ${\delta({B/A})=\delta{(BA)}-\delta{(A)}}$. We will call $\delta({B}/{A})$, the \textit{relative rank} of ${B}$ over ${A}$. When ${B}$ and ${A}$ are understood in context we will just say relative rank.
\end{defn}

We introduce some notation: 

\begin{notation}\label{notation:e(A)} For readability, we will often write $\z\alpha_E$ in place of ${\z\alpha}(E)$. Given $\f{Z}\in\z{K_L}$ and ${A,B,C}\finsubset Z$, we write $e({A})$ for $\sum_{E\in{L}}\z\alpha_E N_{E}({A})$, $e({A,B})$ for $\sum_{E\in{L}}\z\alpha_E N_{E}({A,B})$ and $e({A,B,C})$ for $\sum_{E\in{L}}\z\alpha_E N_{E}({A,B,C})$. 
\end{notation} 

The following collects some useful facts about the behavior of the rank function $\delta$  routine computations:

\begin{fact}\label{lem:BasicDel1}\label{lem:MonoRelRankOvBase}\label{lem:SubModularityOverBase}\label{fact:ExistenceOfStrongPoint}\label{lem:BasicDel3}\label{fact:rankAddOverBases}\label{fact:delMain}
	Let $\f{Z}\in\z{K_L}$ and let ${A,B,C,B_i}\finsubset Z$.
	\begin{enumerate}
		\item $\delta({B/A})=\delta({B})-\delta({A\cap{B}})-e({A-B,A\cap B,B-A})$ and hence if either $A$ or $B$ is in $\Kfin$, $\delta({B/A})\leq{\delta({B})-e({A-B, A\cap B, B-A})}$. Further if ${A, B}$ are disjoint then  $\delta({B/A})=\delta({B})-e({A,B})$.
		
		%\item \begin{enumerate}(!!!!!!!!!!!!!!!!!!!!!!DO NOT REMOVE!!!!!!!!!!!!!!!!!!!!!!!!!!!!1)
		
		%	\item  Assume that $B\cap C= A$. Then $\delta({BC/B})\leq{\delta({C/A})}$. Furthermore, equality holds if $B,C$ are freely joined over $A$, while $\delta({C/B})+\z\alpha_E=\delta({BC/B})+\z\alpha_E\leq{\delta({C/A})}$ whenever $E^{{BC}}\neq{E^{{B}}}\cup{E^{{C}}}$.
		
		\item Let $A'=A\cap{B}$. Now $\delta({B/A'})\geq \delta({B/A})=\delta(AB/A)$, while $\delta({AB/A})+\z\alpha_E=\delta({B/A})+\z\alpha_E\leq{\delta({B/A'})}$ whenever $E^{{AB}}\neq{E^{{A}}}\cup{E^{{B}}}$. %(set A=A', B=A and C = B in Lemma 2.3 of \cite{Las1} that has been commented out)

		\item Assume that ${BC\cap{A}}=\emptyset$, ${A\leq AB}$ and ${A\leq AC}$. Then $\delta({BC/A}) \leq \delta({B/A})+\delta({C/A})$.
		
		%\item Assume that ${A\leq B}$ and $\delta({B/A})>0$. Then there exists $b\in B-A$ such that for all $B'$ with $bA\subseteq B'$, $\delta({B'/A})>0$.
		
		\item If $\{{B}_i: i<n\}$ is disjoint over ${A}$ and ${Z}=\oplus_{i<n}{B}_i$ is their free join over $A$, then $\delta({Z/A})=\sum_{i<n}\delta({B}_i/A)$. In particular, if ${A}\leq{{B}_i}$ for each $i<n$, then ${A}\leq{\oplus_{i<n}{B}_i}$.
		
		\item $
		\begin{array}{lcl}
		\delta({B}_1{B}_2\ldots{B}_k/{A})&=&\delta({B}_1/{A})+\sum_{i=2}^k\delta({B}_i/{AB}_1\ldots{B}_{i-1})
		\end{array}
		$		  
	\end{enumerate}
\end{fact}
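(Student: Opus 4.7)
All five items are straightforward computations once (1) is in place, so my plan is to prove (1) carefully and then grind out the rest as corollaries.

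For (1), I partition $AB$ into the three disjoint pieces $A-B$, $A\cap B$, $B-A$ and classify each hyperedge on $AB$ by which of the pieces it meets. The hyperedges entirely in $A$ are counted by $e(A)$, those entirely in $B$ by $e(B)$, and the overlap (those in $A\cap B$) is counted twice and must be subtracted. The remaining hyperedges are precisely those meeting both $A-B$ and $B-A$, which by definition contribute $e(A-B,A\cap B,B-A)$. This gives the identity $e(AB)=e(A)+e(B)-e(A\cap B)+e(A-B,A\cap B,B-A)$. Combined with $|AB|=|A|+|B|-|A\cap B|$, the main formula for $\delta(B/A)$ follows from the definition $\delta=|\cdot|-e(\cdot)$. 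The inequality is then immediate because $A\cap B\subseteq A$ or $A\cap B\subseteq B$ forces $\delta(A\cap B)\ge 0$ under either hypothesis, and the disjoint case is the specialization $A\cap B=\emptyset$ with $e(A,\emptyset,B)=e(A,B)$.

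Part (5) is a telescoping sum directly from $\delta(X/Y)=\delta(XY)-\delta(Y)$, and needs no argument beyond writing it out. For (2), apply (1) with $A'=A\cap B$: since $A'\subseteq B$ we have $A'-B=\emptyset$, so the crossing term vanishes and $\delta(B/A')=\delta(B)-\delta(A')$. Comparing with the $\delta(B/A)$ formula from (1) yields $\delta(B/A')-\delta(B/A)=e(A-B,A',B-A)\ge 0$. The identity $\delta(AB/A)=\delta(B/A)$ follows by a second application of (1) (now $B$ replaced by $AB$ over $A$, where again one side lies inside the other). For the last clause, any witness $E^{AB}\neq E^A\cup E^B$ provides an $E$-edge that meets both $A-B$ and $B-A$, so it contributes at least $\z\alpha_E$ to $e(A-B,A',B-A)$.

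Part (3) is the substantive item and I expect it to be the main obstacle, since the hypotheses $A\le AB$ and $A\le AC$ must be genuinely used. Rewriting, one wants the submodularity $\delta(ABC)+\delta(A)\le \delta(AB)+\delta(AC)$. I apply (1) to the pair $AB,AC$: their intersection is $A\cup(B\cap C)$, so $\delta(ABC)=\delta(AB)+\delta(AC)-\delta(A\cup(B\cap C))-e(B-C,A\cup(B\cap C),C-B)$. The second term is $\ge 0$, and since $A\subseteq A\cup(B\cap C)\subseteq AB$, the strongness $A\le AB$ gives $\delta(A\cup(B\cap C))\ge\delta(A)$, closing the inequality. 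Finally, for (4), (5) reduces the claim to showing $\delta(B_i/AB_1\cdots B_{i-1})=\delta(B_i/A)$ for each $i$; by (1), the pairwise intersection is $A$ (from disjointness over $A$), and the crossing term vanishes because a free join has no edges meeting two distinct $B_j-A$'s. The ``in particular'' clause then follows because any intermediate $A\subseteq X\subseteq\oplus_{i<n}\f{B}_i$ is itself the free join over $A$ of the sets $X\cap B_i$, so $\delta(X/A)=\sum_{i<n}\delta(X\cap B_i/A)\ge 0$ when each $A\le\f{B}_i$.
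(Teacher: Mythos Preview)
Your proposal is correct. The paper itself does not supply a proof here---it explicitly calls these ``routine computations'' and states the fact without argument---so there is nothing to compare against beyond confirming that your computations are the intended ones, which they are: the inclusion--exclusion identity $e(AB)=e(A)+e(B)-e(A\cap B)+e(A-B,A\cap B,B-A)$ is the only substantive step, and everything else (including (3), where you correctly observe that only one of the two strongness hypotheses is actually needed) unwinds from it and from the telescoping (5).
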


\section{Existence theorems}\label{sec:ExistThm}

In this section we establish several results that can be viewed as results that are purely about finite weighted hypergraphs. The results are all obtained by explicitly constructing various weighted hypergraphs. Fix an $\z{\alpha}$.  We begin with the following definitions: 

\begin{defn}\label{defn:essentialMinPa}
	Let $\f{B}\in\Kfin$ with $\delta(\f{B})>0$. We call $\f{D}\in \Kfin$ with $\f{B\subseteq D}$ an \textit{essential minimal pair} if $(\f{B,D})$ is a minimal pair and for any $\f{D'\subsetneq D}$, $\delta(\f{D'/D'\cap{B}})\geq 0$.  
\end{defn}

\begin{defn}\label{defn:RationAlpha}
	We say that $\z\alpha$ \textit{is rational} if $\z\alpha_E$ is rational for all $E\in L$.
\end{defn}

\begin{defn}\label{defn:MaxArity}
	We use $ar(L)$ to denote $\max\{ar(E):E\in L\}$.
\end{defn}

One of the main results of Section \ref{sec:ExistThm} is Theorem \ref{thm:OmitPrelim2}. It states that given $\f{B}\in\Kfin$ with $\delta(\f{B})>0$, there exists infinitely many non-isomorphic $\f{D}\in\Kfin$ where $(\f{B,D})$ is an essential minimal pair  that satisfies  $-\epsilon\leq\delta(\f{D/B})<0$ where $\epsilon$ is, \textit{in context}, arbitrarily small. The overall proof of this theorem has the following structure:\begin{enumerate}
	
	\item We begin by introducing the notion of an $L$-\textit{collection}. An $L$-collection $r$ will be a multiset, i.e. a set with repeated elements, where each element is an element of $L$. For any $E$ in $L$, we let $r(E)$ be the number of times $E$ is repeated in $r$.  
	
	\item Next we introduce the notion of a \textit{template}. A template, will be a triple $\langle n, \u{r}, t\rangle$. Here $n$ is a positive integer and $\u{r}=\langle r_1\ldots,r_n\rangle$ will index a collection $L$-collections. Further each $r_i$ will have the property that for each $E\in L$, $r_i(E)<m_{pt}$, where $m_{pt}$ is a fixed positive integer that we will introduce shortly. Finally $t=\{E_1,\ldots,E_{n-1}\}$ is an indexed $L$-collection. The idea is that the extension $\f{D}\supseteq \f{B} $ will  have universe $D-B=\{d_1,\ldots,d_n\}$. Further, for each $E\in L$, it will have $\u{r}(j)(E)$ many relations involving only subsets of $B$ and $d_j$. Also there will be precisely one relation involving $t(j)$, $\{d_j,d_{j+1}\}$ and a subset of $B$ and no other relations (besides the ones already in $\f{B}$) will hold.
	
	\item A moments' reflection shows that under the above conditions above, not all $\f{B}\in\Kfin$ will have extensions by templates (for example $L$ might contain only one relation symbol whose arity $ar(E)$ is much larger than $|B|$). We identify crude bounds such as $m_{pt}$ and on $|B|$ that will make the construction of an extension by a template feasible. Let $ar(L)=\max\{ar(E) :E\in L\}$ The bound on $|B|$ will be picked so that there are at least $m_\text{pt}ar(L)$ disjoint subsets of $B$.
	
	\item With these technical details aside, we isolate the notions of \textit{acceptable} and \textit{good} templates for a fixed $\f{B}\in\Kfin$ with positive rank. A good template $\Theta$ is set up in such a way that guarantees that an extension $\f{D}$ of $\f{B}$ using $\Theta$ will be an essential minimal pair. Thus we are left with generating good templates, which we carry out with the help of some number theoretic results (see Appendix \ref{App:NumTh}). The notion of acceptable, which is weaker than the notion of good, is isolated as it plays a part in the second main result of this section, i.e. Theorem \ref{thm:AnnhilConstruc}.	 
	
	\item We prove Lemma \ref{thm:OmitPrelim3}, which states: Given $\f{B}\in\Kfin$ with $|B|$ \textit{sufficiently large} and $\delta(\f{B})>0$ that there are here exists infinitely many non-isomorphic $\f{D}\in\Kfin$ where $(\f{B,D})$ is an essential minimal pair  that satisfies $-\epsilon\leq\delta(\f{D/B})<0$. Here again, $\epsilon$ is, in context, arbitrarily small. Finally in Theorem \ref{thm:OmitPrelim2} we establish the desired result.
\end{enumerate}

We now introduce some of the notions that we alluded to above:

\begin{defn}\label{defn:CoarseBounds}
	We define $m_\text{pt}$ be the least positive integer $m\in\omega$ such that $1-m_\text{pt}\z\alpha_E<0$ for all $E\in L$. We let $m_\text{suff}$ be the product $m_\text{pt}ar(L)$.
\end{defn}

\begin{remark}\label{rmk:ExplanationCrudeBounds}
	Note that if $\f{B}\in K_L$ and $\f{D}\in K_L$ is a one point extension of $\f{B}$ and $\delta(\f{D/B})\geq 0$, then the number of relations that include the single point in $D-B$ and $B$ is less than $m_\text{pt}$. It can be seen that given an essential minimal pair $(\f{A,C})$ and $c\in{C-A}$, then $N(c,A)<m_\text{pt}$. Now $m_\text{suff}$ gives a crude lower bound over the size of $\f{B}\in\Kfin$ over which we can construct essential minimal pairs. Here $m_\text{suff}$ stands for sufficient.       
\end{remark}

The other main result in this section, Theorem \ref{thm:AnnhilConstruc}, is concerned with building $\f{D}\in\Kfin$ such that $\delta(\f{D})=0$ that extend $\f{B}\in\Kfin$ with $\delta(\f{B})>0$. We will see that the existence of such structures can be characterized by the notion of \textit{coherence}. 

\begin{defn}\label{defn:coherence}
	We say that ${\z\alpha}$ is \textit{coherent} if there exists $\langle m_E : E\in L, m_E\in\omega, m_E>0 \rangle $ such that $\sum_{E\in L} m_{E}\z\alpha_{E} \in\mathbb{Q}$. 
\end{defn} 

\begin{remark}\label{rmk:ClarOfCoherence}
	Clearly if $\z\alpha$ is rational, then $\z\alpha$ is coherent. We now give an example of a coherent $\z\alpha$ that is not rational: Fix  $0<\beta<1/2$ irrational. If $\z\alpha(E_1)=\beta$ for some $E_1\in L$ and $\z\alpha(E_2)=1-\beta$ for some $E_2\in L$ and $\z\alpha(E) \in \{\beta,1-\beta\}$ for all $E\in L$, then $\z\alpha$ is coherent but not rational.   
\end{remark}

In Section \ref{sec:AtomECMod}, we use these structures to classify the $\z\alpha$ for which the corresponding theory of the Baldwin-Shi hypergraph has atomic models. The construction of the required $\f{D}$ will again be done with the help of templates and will reuse the ideas developed in the constructions of essential minimal pairs with some caveats. 

\subsection{Templates and Extensions}\label{subsec:Templates}

We begin by defining a template.

\begin{defn}\label{defn:collection}
	A multiset $r$ where the elements of $r$ are relation symbols from $L$ will be called an $L$-\textit{collection}. Given $E\in L$, $r(E)$ will denote the number of times that $E$ is repeated in $r$. Further we let $|r|=\sum_{E\in L} r(E)$. Given a $L$-collections $r$ and $r'$, we say that $r'$ is a \textit{sub-collection} of $L$ if $r'\subseteq r$. 
\end{defn}

\begin{notation}
	Throughout the rest of Section \ref{sec:ExistThm}, we will use the letters $r,s$ (with or without various subscripts) to denote $L$-collections.
\end{notation}

\begin{defn}\label{def:Template}
	Let $n\geq 3$ be a fixed positive integer. Let $\u{r}=\langle r_1,\ldots r_n \rangle$ where each $r_i$ is an $L$-collection. Further let ${t}$ be an \textit{indexed} $L$-collection with $|t| = n-1$, i.e. there is a fixed enumeration $E_1,\ldots,E_{n-1}$ of the elements of $t$. We call a triple $\Theta=\langle n, \u{r} , t \rangle$  an $n$-\textit{template} if for each $1\leq i \leq n$, $E\in L$ we have that $r_i(E)<m_\text{pt}$.  
\end{defn}

Given a template and $\f{B}\in{K_L}$, we use the template to create an extension $\f{D}$ of $\f{B}$. As noted previously \textit{The constructions of interest} are the ones where given $\f{B}\in\Kfin$ and we can create $\f{D}$ extending $\f{B}$ such that $\f{D}\in\Kfin$ and $\f{D}$ satisfies other desirable properties. We now make precise the notion of an extension by a template that was somewhat loosely described at the beginning of Section \ref{sec:ExistThm}.   

\begin{defn}\label{defn:ThtExt}
	Let $\f{B}\in{K_L}$ such that $|B|\geq m_\text{suff}$. Let $\Theta$ be an $n$-template. An \textit{extension of $\f{B}$ by $\Theta$} is some $\f{D}$ in $K_L$ that satisfies 
	\begin{enumerate}
		\item $\f{B\subseteq{D}}$
		\item The universe of $\f{D-B}$ is $\{d_1,\ldots,d_n\}$, i.e. it consists of $n$-points.
		\item For each $1\leq i \leq{n-1}$, there is a subset $Q\subseteq{B}$ of size $ar(E_i)-2$ such that $\{d_{i},d_{i+1}\}\cup{Q}\in{E_i^D}$ (where ${Q}$ is possibly empty).
		\item If $r_{i}(E)> 0$ for some $E\in{L}$, there are precisely $r_{i}(E)$ distinct subsets $Q_1,\ldots,Q_{r_{i}(E)}$ of $B$ of size $ar(E)-1$  such that $\{d_{i}\}\cup{Q_j}\in E^{D}$ for $1\leq j \leq r_{i}(E)$.     
		\item There are no further relations in $\f{D}$ than the ones that were originally in $\f{B}$ and the ones that are described above.
	\end{enumerate}
	In the case for any $b\in{B}$, there exists some $d_{j}$, $Q'\subseteq{D}$, $E\in L$ such that $\{b, d_j\}\cup{Q'}\in{E^\f{D}}$, we say that $\f{D}$ \textit{covers} $\f{B}$.
\end{defn}	

\begin{lemma}\label{lem:ExisOfUnQualThtExt}
	Let $\f{B}\in{K_L}$ such that $|B|\geq m_\text{suff}$. Let $\Theta$ be an $n$-template. There is an an extension $\f{D\supseteq B}$ of $\f{B}$ by $\Theta$.    
	Moreover if $\sum_{i=1}^n |r_i|\geq |B|$ or if $\sum_{ar(E)\geq 3}(t(E)+\sum_{i=1}^n r_{i}(E))\geq |B|$ there exists $\f{D}$ that covers $\f{B}$.
\end{lemma}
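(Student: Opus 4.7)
The plan is to construct $\f{D}$ explicitly by adjoining $n$ fresh points $d_1,\ldots,d_n$ to $B$ and then declaring every prescribed relation by hand. For each $1\leq i\leq n-1$, I would pick any subset $Q_i\subseteq B$ of size $ar(E_i)-2$ (which exists since $|B|\geq m_\text{suff}\geq ar(L)$) and put $\{d_i,d_{i+1}\}\cup Q_i\in E_i^{\f{D}}$. For each $1\leq i\leq n$ and each $E\in L$ with $r_i(E)>0$, I would pick $r_i(E)$ distinct subsets of $B$ of size $ar(E)-1$ and declare the corresponding sets $\{d_i\}\cup Q$ to lie in $E^{\f{D}}$. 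Since $r_i(E)<m_\text{pt}$ while $\binom{|B|}{ar(E)-1}\geq\binom{m_\text{pt}ar(L)}{ar(L)-1}$ is enormous by comparison, such choices are clearly available. Adding no further relations produces an extension of $\f{B}$ by $\Theta$ satisfying clauses (1)--(5) of Definition \ref{defn:ThtExt} by construction.

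For the ``moreover'' clause I would refine the choice of subsets so that every $b\in B$ lies in at least one new relation. Enumerate $B=\{b_1,\ldots,b_{|B|}\}$. Suppose first that $\sum_{i=1}^n|r_i|\geq|B|$. Index the singleton slots of the template as triples $(i,E,j)$ with $1\leq j\leq r_i(E)$; there are $\sum_i|r_i|\geq|B|$ of them. Assign $b_k$ to the $k$-th slot. For the slot $(i,E,j)$ assigned $b_k$, choose its subset $Q\subseteq B$ to contain $b_k$ and to be disjoint from every $b_{k'}$ that has been assigned to another slot in the same $(i,E)$-group. This amounts to picking $ar(E)-2$ further elements of $B$ outside a forbidden set of size at most $r_i(E)-1<m_\text{pt}-1$, which is easy since $|B|\geq m_\text{pt}ar(L)$. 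The subsets chosen inside each $(i,E)$-group are then pairwise distinct because each meets the distinguished set of assigned $b$'s in exactly one point, and every $b_k$ with $k\leq|B|$ is picked up by its assigned slot, so $\f{D}$ covers $\f{B}$.

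The second case, $\sum_{ar(E)\geq 3}\bigl(t(E)+\sum_{i=1}^n r_i(E)\bigr)\geq|B|$, is handled the same way after simultaneously enumerating the edge slots carrying a symbol $E_i$ with $ar(E_i)\geq 3$ together with the singleton slots carrying a symbol of arity $\geq 3$; each such slot carries at least one element of $B$ in its subset (an edge contributes $ar(E_i)-2\geq 1$ slot and a singleton contributes $ar(E)-1\geq 2$), so assigning $b_k$ to the $k$-th such slot and forcing it into the corresponding subset suffices for coverage, while the same intra-group avoidance argument supplies distinctness among singletons. The only delicate point throughout is simultaneously forcing prescribed elements of $B$ into the chosen subsets while preserving distinctness within each $(i,E)$-group, and this is precisely what the calibration $m_\text{suff}=m_\text{pt}ar(L)$, combined with the bound $r_i(E)<m_\text{pt}$ built into the template definition, is designed to accommodate.
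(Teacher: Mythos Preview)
Your proposal is correct and follows essentially the same approach as the paper: adjoin $n$ fresh points, use the bound $|B|\geq m_\text{suff}=m_\text{pt}\,ar(L)$ to guarantee enough distinct subsets of each required size, and for the ``moreover'' clause arrange the choices so that every $b\in B$ appears in some new relation. In fact you supply more detail on the covering construction than the paper does---the paper simply asserts that under either hypothesis the various $Q$ and $Q_j$ ``may be chosen so that their union is $B$'' and leaves the bookkeeping to the reader, whereas you spell out a slot-assignment scheme.
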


\begin{proof}
	Take $D_0=\{d_1,\ldots,d_n\}$ and consider the $L$ structure $\f{D}_0$ with universe $D_0$ and no relations in $\f{D}_0$. Now $\f{D}$ will be a structure with universe $B\cup{D_0}$.  
	
	First note that since $|B|\geq m_\text{suff}$, $B$ has at least $m_\text{pt}$ distinct subsets of size $ar(E)-1$ for each $E\in{L}$. For each $1\leq i \leq n-1$ we may fix some subset $Q\subseteq B$ and add a relation so that $\{d_{i},d_{i+1}\}\cup{Q}\in{E_i^D}$. Here $Q$ is possibly empty: in fact $Q$ is empty if and only if $E_i$ is a binary relation symbol.  
	
	Now fix $1\leq i \leq{n}$. For each $E\in{L}$ we have $r_{i}(E)<m_\text{pt}$. Thus for fixed $E\in{L}$, as $|B|\geq m_\text{suff}$, we may choose $r_{i}(E)$ distinct subsets $Q_j$ as $1\leq j\leq r_i(E)$, of $B$ where each $Q_j$ is of size $ar(E)-1$. Add relations so that $\{d_{i}\}\cup{Q_j}\in E'^{D}$ for $1\leq j \leq r_{i}(E)$. Do this for each relation symbol $E\in{L}$. Now assume that this process of adding relations has been carried out for each $1\leq i \leq n$. Let the resulting structure be $\f{D}$.  Note that the relations that hold on $\f{D}$ are precisely the ones that turn $B$ to $\f{B}$ and the relations described so far. It is now clear that the resulting structure satisfies the properties required of  $\f{D}$.
	
	If $\sum_{i=1}^{n} |r_i|\geq |B|$ we may insist that the choice of $Q_j$, as $E$ ranges through $L$,  be made so that their union is $B$.  If $\sum_{ar(E)\geq 3}(t(E)+\sum_{i=1}^n r_{i}(E))\geq |B|$, then we may insist that the choice of the various $Q$ and $Q_j$ be made so that the union is $B$. In either case the statement that for any $b\in{B}$, there exists some $d_{j}$, $Q'\subseteq{D}$ such that $\{b, d_j\}\cup{Q'}\in{E^\f{D}}$ for some $E\in{L}$ holds.      
\end{proof}

\begin{remark}
	Note that an extension by $\Theta$ need not be unique up to isomorphism over $\f{B}$. However given two non-isomorphic extensions $\f{D,D'}$ of $\f{B}$ by $\Theta$ their relative ranks are identical: $\delta(\f{D/B})=\delta(\f{D'/B})$. Hence $\delta(\f{D})=\delta(\f{D'})$.
\end{remark}

\begin{notation}\label{notation:ThetaExtension}
	Let $\Theta = \langle n,\u{r},t \rangle$ be an $n$-template. Fix $1\leq j\leq n$. Let $\f{B}\in{K_L}$ such that $|B|\geq m_\text{suff}$  and let $\f{D}$ be an extension by $\Theta$ of $\f{B}$. Under the natural enumeration of $D-B=\{d_1,\ldots,d_n\}$ used to construct the extension; we let $\f{D}^j$ denote the substructure of $\f{D}$ with universe $B\cup\{d_{1},\ldots,d_{j}\}$ for $1\leq j\leq n$ and we let ${\f{D}^{j,k}}$ denote the substructure of $\f{D}$ with $=B\cup\{d_j,\ldots d_{k}\}$ for any $1\leq j\leq k\leq n$.    	        
\end{notation}

We now define the \textit{acceptable} and \textit{good} templates. As noted previously, good templates are defined with the construction of essential minimal pairs in mind. Acceptable templates capture a weaker notion that is common to both the essential minimal pairs and the rank zero extensions that are dealt with in Section \ref{subsec:Coherence}. 

When dealing with templates it will often be convenient to focus on the sub-language of the symbols that occur in $\Theta$. We make the following somewhat broader definition.     

\begin{defn}\label{defn:localization}
	Given a \textit{triple} $\Theta=\langle n,\u{r},t \rangle$, the \textit{localization of $L$ to $\Theta$}, denoted by $L^\Theta$ is the subset of $L$ such that $E\in{L^\Theta}$ if and only if $E$ occurs positively in $\Theta$, i.e.  $r_j(E)>0$ for some $1\leq j \leq n$ or $E=E_j$ for some $1\leq j \leq n-1$. Further we let $Gr_{\Theta}(2)$ denote the least positive value of $\sum_{E\in L^\Theta}\z\alpha(E)n_{E}-1$ for non-negative integers $n_E$.
\end{defn}

\begin{remark}
	The reason behind using the notation $Gr_{\Theta}(2)$ will become clear in Section \ref{subsec:GeneratingTemplates}. 
\end{remark}

\begin{defn}\label{defn:GoodTemplate}
	Let $\f{B}\in \Kfin$ be such that $|B|\geq m_\text{suff}$ and $\delta(\f{B})>0$.  Let $\Theta$ be a $n$-template and let $\f{D}$ be an extension of $\f{B}$ by $\Theta$. We say that $\Theta$ is \textit{acceptable} for $\f{B}$ if and only if 
	\begin{enumerate}
		\item $0<-\delta(\f{D/B})\leq \min\{\delta(\f{B}), Gr_{\Theta}(2)\}$. 
		\item $\delta(\f{D}^j/\f{B})\geq 0$ for $1\leq j \leq n-1$.
		\item $\z\alpha(E_j)-\delta(\f{D}^j/\f{B})>0$ for $1 \leq j \leq n-1$.
	\end{enumerate}
	We say that $\Theta$ is \textit{good} for $\f{B}$ if \begin{enumerate}
		\item $\Theta$ is acceptable for $\f{B}$.
		\item $\z\alpha(E_j)-\delta(\f{D}^j/\f{B})+\delta(\f{D/B})\geq 0$ for $1\leq j  \leq n-1$.
		\item We may in addition assume that $\f{D}$ can be chosen so that it covers $\f{B}$. 
	\end{enumerate}   
\end{defn}

The following lemma captures the key properties of extensions by acceptable and good templates.

\begin{lemma}\label{lem:preOmitPrelim}
	Let $\f{B}\in \Kfin$ be such that $|B|\geq m_\text{suff}$ and $\delta(\f{B})>0$. Let $\Theta$ be an $n$-template and let $w=n-(\sum_{i=1}^{n-1}\z\alpha_{E_i}+\sum_{i=1}^{n}\sum_{E\in L}\z\alpha_E{r}_{i}(E))$. Let $\f{D}$ be an extension by $\Theta$ of $\f{B}$
	\begin{enumerate}
		\item If $\Theta$ is acceptable, then 
		\begin{enumerate}
			\item[1.a] For any $\f{B\subseteq D'\subsetneq D}$ such that $d_n\notin{\f{D'}}$, $\delta(\f{D}'/\f{B})\geq 0$
			\item[1.b] For any $\f{D'\subsetneq D}$ such that $d_n\notin{\f{D'}}$, $\delta(\f{D}'/\f{D'}\cap{\f{B}})\geq 0$
			\item[1.c] For any $\f{B\subseteq D'\subseteq D}$, $\delta(\f{D}'/\f{B})\geq w$
		\end{enumerate}
		\item If $\Theta$ is good for $\f{B}$, we may choose $\f{D}$ so that $\f{D}$ covers $\f{B}$ and then
		\begin{enumerate}
			\item[2.a] $\f{D}\in\Kfin$
			\item[2.b] For any proper $\f{B\subseteq D'\subsetneq D}$, $\delta(\f{D}'/\f{B})\geq 0$
			\item[2.c] For any $\f{D'\subsetneq D}$, $\delta(\f{D'/B\cap{D'}})\geq 0$
		\end{enumerate}
		i.e. $(\f{B,D})$ is an essential minimal pair with $\delta(\f{D/B})=w$.
	\end{enumerate}   
\end{lemma}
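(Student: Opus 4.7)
The central identities come directly from unpacking the template data. Writing $T_i=\sum_{E\in L}\z\alpha_E r_i(E)$, one counts relations to obtain
\[
\delta(\f{D}^j/\f{B})=j-\sum_{i=1}^{j-1}\z\alpha_{E_i}-\sum_{i=1}^{j}T_i, \qquad \delta(\f{D}^n/\f{B})=w,
\]
and by subtracting,
\[
\delta(\f{D}^{j,k}/\f{B})=\delta(\f{D}^k/\f{B})-\delta(\f{D}^{j-1}/\f{B})+\z\alpha_{E_{j-1}}
\]
for $2\le j\le k\le n$, with $\delta(\f{D}^{1,k}/\f{B})=\delta(\f{D}^k/\f{B})$. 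The geometric engine is that the only template relations spanning distinct $d$-indices are the consecutive-pair $E_j$-relations, so for any $\f{B}\subseteq\f{D}'\subseteq\f{D}$, writing $D'-B$ as a disjoint union of maximal consecutive blocks $[j_l,k_l]$ gives the additivity $\delta(\f{D}'/\f{B})=\sum_l\delta(\f{D}^{j_l,k_l}/\f{B})$.

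Part (1) is then bookkeeping. For (1.a), $d_n\notin D'$ forces $k_l\le n-1$ for every block, and the identity above combined with acceptability (2) ($\delta(\f{D}^{k_l}/\f{B})\ge 0$) and acceptability (3) ($\z\alpha_{E_{j_l-1}}>\delta(\f{D}^{j_l-1}/\f{B})$ when $j_l\ge 2$) yields $\delta(\f{D}^{j_l,k_l}/\f{B})\ge 0$ block-by-block. For (1.b), restricting $\f{B}$ to $\f{B}\cap\f{D}'$ can only delete template-added relations in the resulting substructure, so the relative rank only increases; hence $\delta(\f{D}'/\f{D}'\cap\f{B})\ge\delta(\f{B}\cup(D'-B)/\f{B})$, which is $\ge 0$ by (1.a). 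For (1.c), at most one block contains $d_n$; for such a block $[j,n]$ with $j\ge 2$, acceptability (3) gives $\delta(\f{D}^{j,n}/\f{B})>w$, and $j=1$ gives exactly $w$, while all other blocks contribute $\ge 0$.

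For part (2), fix a covering extension $\f{D}$ using goodness (3). Part (2.b) repeats the (1.a) argument; the only new case is a block $[j,n]$ with $j\ge 2$, where goodness (2) reads precisely $\delta(\f{D}^{j,n}/\f{B})=w+\z\alpha_{E_{j-1}}-\delta(\f{D}^{j-1}/\f{B})\ge 0$, and $j=1$ is excluded since it would force $\f{D}'=\f{D}$. For (2.c), let $B'=B\cap D'$, $S=D'-B$, and let $\f{C}$ be the substructure of $\f{D}$ on $B\cup S$. Restriction from $\f{C}$ to $\f{D}'$ deletes exactly the template relations meeting $B-B'$, so
\[
\delta(\f{D}'/\f{B}')=\delta(\f{C}/\f{B})+\Sigma,
\]
where $\Sigma$ is the total $\z\alpha$-weight of those deleted relations. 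If $\f{C}\subsetneq\f{D}$, part (2.b) gives $\delta(\f{C}/\f{B})\ge 0$ and $\Sigma\ge 0$ finishes. If $\f{C}=\f{D}$ and $B'\subsetneq B$, the cover property supplies some $b\in B-B'$ in a template relation, and since $S=D-B$ the $d$-vertex of that relation lies in $S$, so the relation contributes to $\Sigma$ and $\Sigma\ge\min_{E\in L^\Theta}\z\alpha_E$. Evaluating $\sum_E\z\alpha_E n_E-1$ at $n_E=\lceil 1/\z\alpha_E\rceil$ (with other coordinates zero) shows $Gr_\Theta(2)\le\z\alpha_E$ for every $E\in L^\Theta$, so acceptability (1) yields $-w\le Gr_\Theta(2)\le\min_{E\in L^\Theta}\z\alpha_E\le\Sigma$ and hence $\delta(\f{D}'/\f{B}')=w+\Sigma\ge 0$. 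Finally (2.a) follows: $\delta(\f{D})=\delta(\f{B})+w\ge 0$ by acceptability (1), and for $\f{D}''\subsetneq\f{D}$, $\delta(\f{D}'')=\delta(\f{D}''\cap\f{B})+\delta(\f{D}''/\f{D}''\cap\f{B})\ge 0$ by (2.c) and $\f{B}\in\Kfin$. The essential-minimal-pair clause is then immediate: $\delta(\f{D}/\f{B})=w<0$ by acceptability (1), (2.b) is minimality, and (2.c) is essentiality.

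The main obstacle is the endgame of (2.c) when $\f{C}=\f{D}$: the cover-supplied weight $\Sigma$ must actually dominate $|w|$. This is what the two-sided bound $-w\le Gr_\Theta(2)\le\min_{E\in L^\Theta}\z\alpha_E$ achieves, the right-hand inequality being the one nontrivial number-theoretic check. Every other step is either the block additivity or direct manipulation of the identities for $\delta(\f{D}^j/\f{B})$ and $\delta(\f{D}^{j,k}/\f{B})$.
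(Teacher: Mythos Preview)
Your argument is correct and essentially identical to the paper's: the same block decomposition of $\f{D}'$ into consecutive intervals $\f{D}^{j,k}$ freely joined over $\f{B}$, the same identity $\delta(\f{D}^{j,k}/\f{B})=\delta(\f{D}^k/\f{B})-\delta(\f{D}^{j-1}/\f{B})+\z\alpha_{E_{j-1}}$, monotonicity of relative rank for (1.b) and the first half of (2.c), and the covering hypothesis together with $Gr_\Theta(2)\le\min_{E\in L^\Theta}\z\alpha_E$ for the endgame of (2.c). The only nitpick is that your witness $n_E=\lceil 1/\z\alpha_E\rceil$ for the last inequality gives $0$ rather than a positive value when $1/\z\alpha_E\in\mathbb{Z}$; take $n_E=\lfloor 1/\z\alpha_E\rfloor+1$ instead, which always yields a positive value at most $\z\alpha_E$.
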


\begin{proof}
	
	We begin with $(1)$: For $(1.a)$, the case $\f{D'}=\f{D}^{j}$ for some $1\leq j \leq n-1$ is immediate.  Consider the case that $\f{D'}=\f{D}^{k+1,j}$ where $1\leq k < j \leq n-1$. As there is only a single relation, namely $E_k$,  that contains the points $d_k,d_{k+1}$, it follows that $\delta(\f{D}^{k+1,j}/\f{B})=\delta(\f{D}^{j}/\f{D}^{k})+\z\alpha(E_k)$. Further    $\delta(\f{D}^{k+1,j}/\f{B})=\delta(\f{D}^{j}/\f{B})-\delta(\f{D}^{k}/\f{B})+\z\alpha(E_k)$. But $\z\alpha(E_k)-\delta(\f{D}^{k}/\f{B})+\delta(\f{D}^{j}/\f{B})\geq 0$ by using conditions $2$ and $3$ of $\Theta$ being acceptable. Since an arbitrary $\f{B}\subseteq\f{D'}\subsetneq\f{D}$ with $d_n\notin\f{D}$ can be written as the free join different $\f{D}^{k,j}$ over $\f{B}$, it follows that \textit{for} $\f{B\subseteq D'\subsetneq D'}$, $\delta(\f{D'/B})\geq 0$. Now consider an arbitrary $\f{D'\subseteq D}$ such that $d_n\notin\f{D'}$ and $ \f{B\not\subseteq D'}$. By $(2)$ of Fact \ref{lem:MonoRelRankOvBase} $\delta(\f{D'/B\cap{D'}})\geq \delta({BD'/B})$. But the above shows that $\delta({BD'/B})\geq 0$ and thus $(1.b)$ follows.
	
	For $(1.c)$, note that for $1\leq j\leq n$, $\delta(\f{D}^j/\f{B})<0$ if and only if $j=n$ if and only if $\delta(\f{D}^j/\f{B})=w$. As $\delta(\f{D}^{k+1,j}/\f{B})=\delta(\f{D}^j/\f{B})+\z\alpha(E_k)-\delta(\f{D}^k/\f{B})$ for $1\leq k <j\leq n$ and since $\f{D'}$ can be written as the free join of several $\f{D}^{k,j}$ and over $\f{B}$ and at most one of the $\f{D}^{k,j}$ satisfies $0>\delta(\f{D}^{k,j}/\f{B})\geq w$, it follows that $\delta(\f{D'/B})\geq w$. \\
	
	\noindent Now consider $(2)$: We are assuming $\f{D}$ covers $\f{B}$. As $\delta(\f{{D/B}})=w$ by construction both $(2.a)$ and the statement regarding $(\f{B,D})$ being an essential minimal pair follows from $(2.b)$ and $(2.c)$. For the proof of $2.b$, first consider $\f{D'}=\f{D}^{j+1,n}$ for $1\leq j \leq n-1$. By arguing as above we obtain that $\delta(\f{D}^{j+1,n}/\f{B})=\z\alpha(E_j)-\delta(\f{D}^j/\f{B})+\delta(\f{D/B})$. By using condition $(2)$ of good, it follows that $\z\alpha(E_j)-\delta(\f{D}^j/\f{B})+\delta(\f{D/B})\geq 0$. As $\Theta$ is good, it is also acceptable and thus $\delta(\f{D}^{k,j}/\f{B})\geq 0$ for $1\leq k \leq j \leq n-1$. Since an arbitrary $\f{B}\subseteq\f{D'}\subsetneq\f{D}$ can be written as the free join different $\f{D}^{k,j}$ over $\f{B}$ it follows that \textit{for} $\f{B\subseteq D'\subsetneq D'}$, $\delta(\f{D'/B})\geq 0$. $\delta(\f{D'/B})\geq 0$.    
	
	It remains to show that for a general substructure $\f{D'\subsetneq{D}}$, we have that $\delta(\f{D'/B\cap{D'}})\geq{0}$. If $D'-B\neq D-B$, then this follows easily by $(1.b)$ and $(2)$ of Fact \ref{lem:BasicDel1}. So assume that $D'-B = D-B$.  Since $\f{D'\subsetneq D}$, it follows that $\f{D'\cap{B}}\neq \f{B}$. Fix a relation $E\in L$ such that it holds with a point from $D'-B$ and at least one point from $B-B'$. By using $(2)$ of Fact \ref{lem:BasicDel3} we see that $\delta(\f{D'}/\f{D'\cap B})\geq \delta(\f{D/B})+\z\alpha(E)$. Since $-Gr_{\Theta}(2)\leq \delta(\f{D/B})$, it follows that $0 \leq Gr_{\Theta}(2)+\z\alpha_E \leq \delta(\f{D/B})+\z\alpha_E$. Thus $(2.c)$ follows. %By Lemma \ref{lem:RemovalOfPoint},  %Now $\delta(\f{D'}/\f{D'\cap B})=\delta({D'-B}/{D'\cap B})=\delta(D'-B)-e(D'-B, D'\cap B)$ using $(1)$ of Fact \ref{lem:BasicDel1}.
\end{proof}

\subsection{Generating Templates}\label{subsec:GeneratingTemplates}

In this section we introduce the notions of \textit{acceptable pairs} and \textit{good pairs}. We will show how to construct a good/acceptable template by using a good/acceptable pair. The acceptable and good pairs are easily obtained by the well known number theoretic results that can be found in the Appendix. This allows us to establish that the constructions in Section \ref{subsec:Templates} can indeed be carried out. We finish this section with Lemma \ref{lem:OmitPrelim} and Theorem \ref{thm:OmitPrelim2} which generalize results in \cite{Las1}. We begin by introducing the notion of granularity. %The results here are central to the quantifier elimination results in Section \ref{sec:QuantElim}.

\begin{defn}\label{defn:granularity}
	Given $m\in{\omega}$ with $m\geq 2$ and $L_0\subseteq L$, we define $Gr_{L_0}(m)$, \textit{the granularity $m$ relative to $L_0$}, to be the smallest positive value $\sum_{E\in{L_0}}\z\alpha_E n_E -k$ where $k$ is an integer satisfying $0<k<m$ and each $n_E\in\omega$. In case $L=L_0$ we call $Gr_{L}(m)$ the \textit{granularity of} $m$ and denote it by $Gr(m)$.
\end{defn}

\begin{remark}\label{rmk:GranulairtyAndDropInRank}
	Let $m\in\omega$ with $m\geq 2$ and $\f{A,B}\in\Kfin$. If $|B-A|<m$, then $\delta(\f{B/A})\leq -Gr(m)$. This observation is crucial for many of the arguments in \cite{Las1}.    
\end{remark}

\begin{remark}
	Note that given a triple $\Theta=\langle n,\u{r}, t\rangle$, $Gr_{\Theta}(2)=Gr_{L^\Theta}(2)$. Further if $Gr(2)=\sum_{E\in L}n_E\z\alpha_E-1$, then $\sum_{E\in L}n_E < m_{pt}$ 
\end{remark}

The following is immediate from the definition of granularity.

\begin{lemma}\label{lem:Gr1AtMostAlpha}
	For all $E\in{L}$, $Gr(2)\leq \z\alpha_E$.
\end{lemma}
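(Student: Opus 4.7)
The plan is to produce, for any fixed $E_0 \in L$, an explicit witness of the form $\sum_{E \in L} \z\alpha_E n_E - k$ that is positive, is at most $\z\alpha_{E_0}$, and fits the constraints of Definition \ref{defn:granularity} for $m = 2$ (so the only admissible integer is $k = 1$). Once such a witness is exhibited, $Gr(2)$, being the smallest positive value of this form, will automatically be $\leq \z\alpha_{E_0}$.

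Concretely, I would fix $E_0 \in L$, set $n_E = 0$ for $E \neq E_0$, and choose $n_{E_0}$ to be the least positive integer with $n_{E_0}\z\alpha_{E_0} > 1$. Such an integer exists because $\z\alpha_{E_0} > 0$. Since $\z\alpha_{E_0} \leq 1$, in fact $n_{E_0} \geq 2$, so $n_{E_0} - 1 \geq 1$ is a positive integer; by minimality of $n_{E_0}$ we then have $(n_{E_0} - 1)\z\alpha_{E_0} \leq 1$, i.e.\ $n_{E_0}\z\alpha_{E_0} \leq 1 + \z\alpha_{E_0}$. Taking $k = 1$, this yields
\[
0 < n_{E_0}\z\alpha_{E_0} - 1 \leq \z\alpha_{E_0},
\]
so this expression is a legitimate value appearing in the definition of $Gr(2)$, bounded above by $\z\alpha_{E_0}$. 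Therefore $Gr(2) \leq \z\alpha_{E_0}$.

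The argument is essentially a one-line verification, and there is no real obstacle; the only subtlety is confirming that the minimality condition on $n_{E_0}$ forces $(n_{E_0}-1)\z\alpha_{E_0} \leq 1$ in the edge case $\z\alpha_{E_0} = 1$, but this is handled by noting $n_{E_0} = 2$ there and $(2-1)\cdot 1 = 1 \leq 1$. No appeal to the number-theoretic results of the Appendix is needed for this lemma.
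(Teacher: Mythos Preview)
Your proof is correct and is precisely the natural unpacking of the paper's one-line justification (``immediate from the definition of granularity''): you exhibit, for each $E_0$, an explicit candidate $n_{E_0}\z\alpha_{E_0}-1$ in the set defining $Gr(2)$ and bound it by $\z\alpha_{E_0}$. There is no meaningful difference in approach.
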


We now turn our attention to good pairs and acceptable pairs. The goal will be to use good/acceptable pairs to generate good/acceptable templates, which we proceed to do in Lemma \ref{lem:preOmitPrelim2}.

\begin{defn}\label{defn:WeightedSum}
	Given a non-negative integer $n$ and an $L$-collection $r$, we let the weighted sum $n-\sum_{E\in L}\z\alpha_{E}r(E)$ be denoted by $w(n,r)$.	
\end{defn}

\begin{defn}\label{defn:GoodPairs}
	Let $\f{B}\in\Kfin$ with $\delta(\f{B})>0$. Let $n\in\omega$ and let $s$ be an $L$-collection. Let $L_0\subseteq L$ be such that $E\in L_0$ if and only if $s(E)>0$.  We say that $\langle n, s \rangle$ is an \textit{acceptable pair} for $\f{B}$, if \begin{enumerate}
		\item $\min\{\delta(\f{B}), Gr_{L_0}(2) \}\geq -w(n,s)> 0$ 
		\item $|s|\geq n$ 
	\end{enumerate}
	We say that $\langle n,s \rangle$ is a \textit{good pair} for $\f{B}$
	\begin{enumerate}
		\item $\langle n,s \rangle$ is acceptable
		\item $|s|\geq{|B|+(n-1)}$ or $\sum_{ar(E)\geq 3}(t(E)+\sum_{i=1}^n r_{i}(E))\geq |B|$ 
		\item For all $m \leq n$ and sub-collections $s'$ of $s$, $w(m, s')$ not in the interval $(w(n,s),0)$. %$w(m,s')\notin (w(n,s),0)$. %
	\end{enumerate}
	Often we will not mention $\f{B}$ as it will be clear from context.
\end{defn}

\begin{lemma}\label{lem:preOmitPrelim2}
	Let $\f{B}\in{\Kfin}$ with $\delta(\f{B})>0$, $|B|\geq m_\text{suff}$. If $\langle n,s \rangle$ is an acceptable pair for $\f{B}$, then there exists an acceptable $n$-template $\Theta =\langle n,\u{r},t\rangle$. If $\langle n,s \rangle$ is good, then $\Theta$ will be good for $\f{B}$. 
\end{lemma}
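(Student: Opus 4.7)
The plan is to construct the template $\Theta = \langle n, \u{r}, t\rangle$ by partitioning the multiset $s$ into the indexed bridge sequence $t = (E_1, \ldots, E_{n-1})$ and the local collections $r_1, \ldots, r_n$, then verify the axioms via a partial-sum computation. The key preliminary calculation to carry out is that for any such partition, setting $s^j := \{E_1, \ldots, E_{j-1}\} \cup r_1 \cup \ldots \cup r_j$, an unwinding of Definition \ref{defn:ThtExt} yields $\delta(\f{D}^j/\f{B}) = w(j, s^j)$, and in particular $\delta(\f{D/B}) = w(n, s)$. This converts every template condition into an arithmetic inequality on weighted partial sums of $s$.

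For the construction, I would enumerate $s$ in any fixed order as $\sigma_1, \ldots, \sigma_{|s|}$ and set $T_k := \sum_{i \leq k} \z\alpha_{\sigma_i}$. Since $|s| \geq n$ and $T_{|s|} = n - w(n, s) > n$, for each $1 \leq j \leq n-1$ the smallest index $\tau_j$ with $T_{\tau_j} > j$ is well-defined, and these satisfy $\tau_1 < \ldots < \tau_{n-1} \leq |s|$. Set $E_j := \sigma_{\tau_j}$, let $r_j := \{\sigma_k : \tau_{j-1} < k < \tau_j\}$ for $j < n$ (with $\tau_0 := 0$), and let $r_n := \{\sigma_k : \tau_{n-1} < k \leq |s|\}$. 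Telescoping gives $\sum_E s^j(E)\z\alpha_E = T_{\tau_j - 1}$, so $\delta(\f{D}^j/\f{B}) = j - T_{\tau_j - 1}$; the defining inequalities $T_{\tau_j - 1} \leq j < T_{\tau_j} = T_{\tau_j - 1} + \z\alpha_{E_j}$ then deliver conditions $(2)$ and $(3)$ of acceptable template. Condition $(1)$ follows directly from the acceptable pair hypothesis, noting $L^\Theta = L_0$ (so $Gr_\Theta(2) = Gr_{L_0}(2)$). The bound $r_i(E) < m_\text{pt}$ is automatic: for $i < n$ the total weight of $r_i$ is at most $1 < m_\text{pt}\z\alpha_E$; and if $r_n(E) \geq m_\text{pt}$ for some $E$, then the weight of $r_n$ would be at least $m_\text{pt}\z\alpha_E \geq Gr_{L_0}(2) + 1$, contradicting the bound $T_{|s|} - T_{\tau_{n-1}} < |w(n, s)| + 1 \leq Gr_{L_0}(2) + 1$.

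For the good case, the extra conditions come from the two additional good-pair axioms. Apply the dichotomy on $w(m, s')$ at $m = j$ and $s' = s^j \cup \{E_j\}$ (a sub-collection of $s$ with weighted sum $T_{\tau_j}$): since $T_{\tau_j} > j$, we get $w(j, s^j \cup \{E_j\}) = j - T_{\tau_j} < 0$, so the dichotomy forces $j - T_{\tau_j} \leq w(n, s)$. Rearranging yields $\z\alpha_{E_j} - \delta(\f{D}^j/\f{B}) + \delta(\f{D/B}) = T_{\tau_j} - j + w(n, s) \geq 0$, which is condition $(2)$ of good template. The covering condition $(3)$ then follows from the moreover clause of Lemma \ref{lem:ExisOfUnQualThtExt}, since the good pair guarantees either $\sum_i |r_i| = |s| - (n-1) \geq |B|$ or $\sum_{ar(E) \geq 3} s(E) \geq |B|$. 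The main bookkeeping task will be verifying the telescoping identity $\sum_E s^j(E)\z\alpha_E = T_{\tau_j - 1}$, which follows from the observation that the indices contributing to $\{E_1, \ldots, E_{j-1}\} \cup r_1 \cup \ldots \cup r_j$ are precisely $\{1, 2, \ldots, \tau_j - 1\}$. With this identity in hand and the granularity argument controlling the $m_\text{pt}$-bound, the rest is a direct translation between template conditions and statements about partial sums.
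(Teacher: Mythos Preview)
Your argument is correct and is a genuinely different construction from the paper's. The paper builds $\Theta$ via a two-stage greedy algorithm: first it strips off the $n-1$ elements of $s$ with largest $\z\alpha$-value to form $t$, and then, working with the residual, it greedily chooses each $r_j$ so that the running relative rank $Rel(j)$ is the least possible non-negative value. This requires two auxiliary claims (that the residual multisets $s_{j+1}$ are nonempty, and that $Rel(j)<\z\alpha(E_j)$, the latter relying crucially on the max-$\z\alpha$ ordering of $t$), after which the acceptable and good conditions are verified much as you do. Your approach instead fixes a single arbitrary enumeration of $s$ and lets the first-crossing times $\tau_j$ of the partial sums $T_k$ past the integer thresholds $j$ dictate where the bridges $E_j$ sit; the $r_j$ are just the blocks between consecutive crossings. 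The identity $\delta(\f{D}^j/\f{B})=j-T_{\tau_j-1}$ together with the defining inequalities $T_{\tau_j-1}\le j<T_{\tau_j}$ then delivers conditions (2) and (3) of acceptability in one line, and your invocation of the good-pair dichotomy at the sub-collection $s^j\cup\{E_j\}$ is cleaner than the paper's contradiction argument for the good condition. Your handling of the $r_n(E)<m_{\text{pt}}$ bound via $m_{\text{pt}}\z\alpha_E-1\ge Gr_{L_0}(2)$ (valid since $E\in L_0$) is also correct. The paper's max-$\z\alpha$ ordering buys nothing that your partition identity $t(E)+\sum_i r_i(E)=s(E)$ does not already give for the covering clause, so your route is the more economical one.
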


\begin{proof}
	We begin with the observation that if $u$ is a sub-collection of $s$, then $s-u$ is the residual multiset with $(s-u)(E)=s(E)-u(E)$. Our first goal is to define the \textit{triple} $\Theta=\langle n,\u{r}, t\rangle$. We do this in Step 1. We do this using a ``greedy algorithm". In Step 2, we establish that the triple $\Theta$ we have constructed is indeed a template and it is acceptable/good based on the corresponding properties of $(n,s)$.\\   
	
	\noindent\textit{Step 1}: We first define $t$. For $1\leq j \leq n-1$ inductively define $E_j$ so that $E_j$ is in the residual multiset $s-\{E_1,\ldots,E_{j-1}\}$ and $\alpha(E_j)=\max\{\alpha(E):E\in s-\{E_1,\ldots,E_{j-1}\} \}$. If there is $E\in{L}$ with arity at least 3 such that  $s(E)\geq n-1 \geq |B|$ and $\z\alpha(E)\geq \z\alpha(E^*)$ for all $E^*\in L$, then we insist that the above $E_j$ satisfy $E_j=E$. Let $t$ be the \textit{ordered} $L$-collection $\langle E_1\,\ldots,E_{n-1} \rangle$. Let $s_1$ be the residual multiset $s-\{E_1,\ldots,E_{n-1}\}$.  For $1\leq j\leq n$ define the \textit{potential relative rank} $Rel(j) = \sum_{i=1}^j w(1,r_i)-\sum_{i=1}^{j-1}\z\alpha(E_i)$. 
	
	First let $r_1\subseteq s_1$ be an $L$-collection such that $Rel(1)=w(1,r_1)$ achieves the least possible non-negative value. Assume that for $1\leq j \leq n-1$ that $r_j, s_j $ have been defined and take $s_{j+1}$ to be the residual multiset $s_{j}-r_{j}$. For $1\leq j< n-1$ pick $r_{j+1}\subseteq s_{j+1}$ such that  $Rel(j+1) = Rel(j) + w(1,r_{j+1})-\alpha(E_{j})$ attains the least possible non-negative value and let $r_n=s_n$. Let $\u{r}=\langle r_1,\ldots, r_{n}\rangle$ and let $\Theta$ be the triple $\langle n,\u{r}, t \rangle$. \\
	
	\noindent\textit{Step 2}: We first show that $\Theta$ is indeed an $n$-template. We begin with the following claims.\\
	
	\noindent\textit{Claim 1}: \textit{For $1\leq j < n$, $s_{j+1}$ is non-empty}: We begin by noting that as $|s|\geq n$, $s_1$ is non-empty. Now assume to the contrary that $s_{j+1}$ is empty for some $1\leq j < n$ and let $j_0$ be the least positive integer for which $s_{j_0+1}$ is empty. Then for all $j'\geq j_0+1$, $s_j'$, $w(1,r_{j'})=1$. Now it follows that $0>w(n,s)=Rel(n)=Rel(j_0) + (n-j_0) -\sum_{i=j_0}^{n-1}\z\alpha(E_i)$. By construction $Rel(j_0)\geq 0$. Further as for each $E\in L$, $\z\alpha{(E)}\leq 1$ implies that $(n-j_0) -\sum_{i=j_0}^{n-1}\z\alpha(E_i)\geq 0$. But this yields a contradiction that proves the claim. \\ 
	
	\noindent\textit{Claim 2}: \textit{For $1\leq j < n$, $Rel(j)<\z\alpha(E_j)$}: If not, $Rel(j)\geq \z\alpha(E_j)$ for some $1\leq j <n$. From Claim 1 it follows that there is some $E\in L^\Theta$ such that $s_{j+1}(E)>0$. By our choice of the $E_i$, it follows that $\z\alpha(E_j)\geq \z\alpha(E)$. However this shows that $Rel(j)-\z\alpha(E)\geq \z\alpha(E_j)-\z\alpha(E)\geq 0$ which contradicts our choice of $r_j$.\\ 
	
	Note that to show that $\Theta$ is an $n$-template it suffices to show that for $1\leq j\leq n$, $w(1,r_j)\geq 0$. Now for all $1\leq j <n-1$, $Rel(j+1)\geq 0$ and $Rel(j)<\z\alpha(E_j)$ yields that  $w(1,r_{j+1})=Rel(j+1)+\z\alpha(E_j)-Rel(j)\geq 0$. Now assume that $w(1,r_n)<0$. Then $w(1,r_{n})\leq -Gr_{\Theta}(2)$. Now $Rel(n)=w(1,r_n)+Rel(n-1)-\z\alpha(E_{n-1})<-Gr_{\Theta}(2)$ which contradicts $-Rel(n)\geq Gr_{\Theta}(2)$. Thus it follows that $w(1,r_n)\geq 0$. Hence $\Theta$ is indeed a $n$-template. 
	
	Let $\f{D}$ be an extension of $\f{B}$ by $\Theta$ as given by Lemma \ref{lem:ExisOfUnQualThtExt}. Observe that $\delta(\f{D}^j/\f{B})=Rel(j)$ for $1\leq j \leq n$. It immediately follows that if $\langle n,s \rangle$ is acceptable, then $\Theta$ is also acceptable. Now assume that $\langle n,s \rangle$ is good. We claim that $\Theta$ is good. By construction $|s|=|t| + \sum_{i=1}^n |r_i|$. Recall condition $(2)$ of good. If $|s|\geq |B|+(n-1)$, then $\sum_{i=1}^n |r_i|\geq |B|$. Else we have that $\sum_{ar(E)\geq 3}(t(E)+\sum_{i=1}^n r_{i}(E))\geq |B|$. Now  Lemma \ref{lem:ExisOfUnQualThtExt} shows that $\f{D}$ can be constructed in a manner covers $\f{B}$. Thus in order to establish that $\Theta$ is good it suffices to show $\z\alpha(E_j)-\delta(\f{D}^j/\f{B})+\delta(\f{D/B})\geq 0$ for $1\leq j  \leq n-1$. Suppose to the contrary that $a=\z\alpha(E_j)-\delta(\f{D}^j/\f{B})+\delta(\f{D/B}) < 0$ for some $1\leq j  \leq n-1$. Thus we may write $a=w(m,s')$ for some $m \leq n$ and some sub-collection $s'$ of $s$. Now by clause $(3)$ of goodness and the fact that $\langle n,s \rangle$ is good, it follows that $a\leq w(n,s)$. But $w(n,s)=\delta(\f{D/B})$ and hence $\z\alpha(E_j)-\delta(\f{D}^j/\f{B})\leq 0$, a contradiction to Claim 2. Thus $\Theta$ is good. \end{proof}

\begin{cor}\label{lem:OmitPrelim}
	Let $\f{B}\in{\Kfin}$ with $\delta(\f{B})>0$, $|B|\geq m_\text{suff}$ and $\langle n,s \rangle$ a good pair with $n\geq 3$. Then there is an $\f{D}\in{\Kfin}$ such that $(\f{B,D})$ is an essential minimal pair with $w(n,s)=\delta(\f{D/B})<0$.
\end{cor}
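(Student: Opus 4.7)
The plan is to derive the corollary as an almost immediate consequence of the two preceding results, Lemma~\ref{lem:preOmitPrelim2} and Lemma~\ref{lem:preOmitPrelim}. The hard work has already been done; what remains is a short bookkeeping argument.

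First I would invoke Lemma~\ref{lem:preOmitPrelim2} on the hypotheses $\f{B}\in\Kfin$, $\delta(\f{B})>0$, $|B|\geq m_\text{suff}$, and the good pair $\langle n,s\rangle$ to produce a good $n$-template $\Theta=\langle n,\u{r},t\rangle$ for $\f{B}$. Next, I would apply Lemma~\ref{lem:ExisOfUnQualThtExt} to obtain an extension $\f{D}$ of $\f{B}$ by $\Theta$; since $\Theta$ is good, clause~(3) of the definition of good allows us to choose $\f{D}$ so that it covers $\f{B}$. At this point, clause~(2) of Lemma~\ref{lem:preOmitPrelim} applies directly: $\f{D}\in\Kfin$ and $(\f{B,D})$ is an essential minimal pair with $\delta(\f{D/B})=w$, where $w=n-\bigl(\sum_{i=1}^{n-1}\z\alpha_{E_i}+\sum_{i=1}^n\sum_{E\in L}\z\alpha_E r_i(E)\bigr)$.

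The only remaining step is to verify that the number $w$ produced by the template coincides with $w(n,s)=n-\sum_{E\in L}\z\alpha_E s(E)$. This is just an accounting check on the greedy construction in the proof of Lemma~\ref{lem:preOmitPrelim2}: the $E_i$'s are chosen from $s$ (with residues updated to $s_1,s_2,\ldots$), each $r_j$ is chosen from the current residue $s_j$, and at the end one sets $r_n=s_n$, so the whole of $s$ is consumed. Therefore $|t|+\sum_{i=1}^n|r_i|=|s|$, and more precisely $t(E)+\sum_{i=1}^n r_i(E)=s(E)$ for every $E\in L$. Substituting into the expression for $w$ gives $w=n-\sum_{E\in L}\z\alpha_E\bigl(t(E)+\sum_i r_i(E)\bigr)=n-\sum_{E\in L}\z\alpha_E s(E)=w(n,s)$, as required.

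The only potential obstacle I anticipate is making sure the bookkeeping is rigorous: that the greedy algorithm does exhaust the collection $s$ and does not leave any symbol behind. This is guaranteed by the final assignment $r_n := s_n$ in Step~1 of the proof of Lemma~\ref{lem:preOmitPrelim2}, together with Claim~1 in that proof, which rules out premature emptying. Granted that point, the conclusion $\delta(\f{D/B})=w(n,s)<0$ is immediate, and $(\f{B,D})$ being an essential minimal pair is exactly the content of Lemma~\ref{lem:preOmitPrelim}(2), completing the proof.
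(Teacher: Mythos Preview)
Your proposal is correct and follows exactly the route the paper intends: the paper's own proof is the single sentence ``This follows directly from Lemma~\ref{lem:preOmitPrelim} and~\ref{lem:preOmitPrelim2},'' and you have simply unpacked that into the obvious two-step argument plus the bookkeeping identity $w=w(n,s)$. One small remark: the exhaustion of $s$ is already guaranteed by the assignment $r_n:=s_n$ alone (the partition $s=t\cup r_1\cup\cdots\cup r_n$ holds regardless of whether any intermediate $s_j$ is empty), so invoking Claim~1 for this purpose is unnecessary---but this does not affect the validity of your argument.
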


\begin{proof}
	This follows directly from Lemma \ref{lem:preOmitPrelim} and \ref{lem:preOmitPrelim2}.
\end{proof}

In Remark \ref{rmk:GranulairtyAndDropInRank}, we established a link between the relative rank of structures and granularity. As it turns out, granularity offers us a very convenient way of establishing a connection between acceptable/good pairs and the number theoretic facts in the Appendix (See Lemma \ref{lem:GoodPairsAndGranularity} and Theorem \ref{thm:OmitPrelim2} below). Thus granularity takes on two \textit{separate} roles: it's original role in \cite{Las1} and the one just mentioned (replacing the role played by \textit{local optimality}, in Section 4 of \cite{Las1}). There is no interaction between the different roles.

We now turn our attention towards using the number theoretic results in the Appendix to construct good pairs.

\begin{lemma}\label{lem:granularity}
	The sequence given by $\langle Gr(m) :m\in{\omega}\rangle$ is a monotonic decreasing sequence. If $\z\alpha$ is not rational, then  $\langle Gr(m) :m\in{\omega}\rangle$ converges to $0$. If $\z\alpha$ is rational, then $Gr(m)$ is eventually constant with $Gr(m)=1/c$ for sufficiently large $m$.
\end{lemma}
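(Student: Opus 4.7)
The plan is to prove the three assertions in turn. \emph{Monotonicity} is immediate from the definition: the set of admissible pairs $(\langle n_E\rangle_{E\in L}, k)$ over which $Gr(m)$ is optimized is nested increasing in $m$, so the minimum positive value of $\sum_{E\in L} \z\alpha_E n_E - k$ can only decrease. This gives $Gr(m+1) \leq Gr(m)$ in one line.

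For the \emph{rational case}, I would let $c$ be the unique positive integer such that the finitely generated $\mathbb{Z}$-module $M := \mathbb{Z} + \sum_{E\in L} \mathbb{Z}\z\alpha_E$ equals $(1/c)\mathbb{Z}$; such a $c$ exists because $M$ is a finitely generated subgroup of $\mathbb{Q}$ containing $1$, hence cyclic. Every value $\sum_E \z\alpha_E n_E - k$ lies in $M$, so $Gr(m) \geq 1/c$ for every $m$. For the reverse inequality I would pick any integer tuple realizing $\sum_E \z\alpha_E n_E - k = 1/c$, and then, writing $\z\alpha_E = p_E/q_E$ with $p_E,q_E$ positive integers, apply the value-preserving shift $(n_E, k) \mapsto (n_E + q_E, k + p_E)$ iteratively to force each $n_E$ into $\omega$ and to drive $k$ above any desired positive threshold. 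Taking $m$ larger than the resulting $k$ witnesses $Gr(m) \leq 1/c$, and the two bounds then coincide for all sufficiently large $m$.

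For the \emph{irrational case}, assume some $\z\alpha_{E^*}$ is irrational. By the Weyl/Kronecker-type density fact collected in Appendix A, the sequence of fractional parts of $n\z\alpha_{E^*}$ is dense in $[0,1]$; in particular, given any $\epsilon > 0$, there are positive integers $n,k$ with $0 < n\z\alpha_{E^*} - k < \epsilon$. Setting $n_{E^*} = n$, $n_E = 0$ for $E \neq E^*$, and choosing any $m > k$ exhibits a value below $\epsilon$ in the defining set of $Gr(m)$. Combined with monotonicity and the positivity built into the definition, this forces $Gr(m) \to 0$.

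The only place that requires care is the sign/range bookkeeping in the rational case: one must pass from an arbitrary integer solution of $\sum_E \z\alpha_E n_E - k = 1/c$ to one with all $n_E \in \omega$ and $k$ a positive integer strictly below $m$. This step relies crucially on the standing hypothesis $\z\alpha_E \in (0,1]$, which guarantees $p_E > 0$ and hence that the shift $(n_E,k)\mapsto (n_E+q_E,k+p_E)$ genuinely increases both coordinates simultaneously while preserving the value. Once that observation is in hand, the rest is essentially a direct invocation of the facts in the appendix together with the cyclicity of finitely generated subgroups of $\mathbb{Q}$.
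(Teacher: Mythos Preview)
Your proposal is correct and follows essentially the same route as the paper. The paper's own proof is a two-line appeal to the appendix: Remark~\ref{rmk:ContinuedFractions} for the irrational case and Remark~\ref{rmk:InfSolDioEqn} for the rational case, with monotonicity left implicit. Your write-up simply unpacks those citations; the only cosmetic differences are that you phrase the irrational case via density of fractional parts rather than continued fractions, and in the rational case you invoke cyclicity of finitely generated subgroups of $\mathbb{Q}$ rather than the explicit $\gcd$/$\mathrm{lcm}$ manipulation of Remark~\ref{rmk:InfSolDioEqn}---but both arguments identify the same group $(1/c)\mathbb{Z}$ and your $c$ coincides with the paper's $c=\mathrm{lcm}(q_E)$.
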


\begin{proof}
	If $\z\alpha$ is not rational then there is some $E\in L$ such that $\z\alpha_E$ is irrational. Now the required result follows from Remark \ref{rmk:ContinuedFractions}. If $\z\alpha$ is rational, then the required result follows from Remark \ref{rmk:InfSolDioEqn}. 
\end{proof}

\begin{notation}
	We fix some notation: Whenever the assumption that $\z\alpha$ is rational is in effect, we assume that $\z\alpha_E=\frac{p_E}{q_E}$ in reduced form and that $c=\text{lcm}(q_E)$.
\end{notation}

\begin{lemma}\label{lem:GoodPairsAndGranularity}
	Let $n\in\omega$ with $n\geq 3$ and $s$ be an $L$-collection. For $1\leq m \leq n$ and any sub-collection $s'$ of $s$, $w(m,s')$ is not in the interval $(-Gr(n+1),0)$.  
\end{lemma}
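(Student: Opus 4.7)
The plan is to argue by contradiction directly from the definition of granularity. Suppose towards contradiction that some $1\le m\le n$ and some sub-collection $s'$ of $s$ yield $w(m,s')\in(-Gr(n+1),0)$. By the definition of $w$ in Definition \ref{defn:WeightedSum}, this means
\[
-Gr(n+1)\;<\;m-\sum_{E\in L}\z\alpha_E\,s'(E)\;<\;0,
\]
which, after negating, rewrites as
\[
0\;<\;\sum_{E\in L}\z\alpha_E\,s'(E)-m\;<\;Gr(n+1).
\]

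Next I would observe that the quantity $\sum_{E\in L}\z\alpha_E\,s'(E)-m$ has exactly the shape of the expressions appearing in the definition of $Gr(n+1)$ in Definition \ref{defn:granularity}: the coefficients $s'(E)$ are non-negative integers, and the integer $m$ satisfies $0<m<n+1$ by hypothesis on $m$, so $m$ is a legitimate choice for the integer $k$ in that definition. Hence $\sum_{E\in L}\z\alpha_E\,s'(E)-m$ is one of the positive reals among which $Gr(n+1)$ is the infimum (indeed, minimum).

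Therefore the displayed value must be at least $Gr(n+1)$, contradicting the strict inequality obtained above. This contradiction forces $w(m,s')\notin(-Gr(n+1),0)$, completing the argument. The proof is essentially a one-step unwinding of definitions; the only thing to check carefully is that the range $1\le m\le n$ of the lemma matches the range $0<k<n+1$ used in the definition of $Gr(n+1)$, so no obstacle is anticipated.
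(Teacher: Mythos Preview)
Your proposal is correct and follows essentially the same approach as the paper: both argue by contradiction and observe that $-w(m,s')=\sum_{E}\z\alpha_E s'(E)-m$ is one of the positive values over which granularity is minimized, yielding the desired contradiction. The only cosmetic difference is that the paper routes through $Gr(m+1)$ and then invokes monotonicity $Gr(m+1)\geq Gr(n+1)$, whereas you go directly to $Gr(n+1)$ by noting $0<m<n+1$; your route is marginally more direct but not genuinely different.
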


\begin{proof}
	Let $n,s,m,s'$ be as above. As granularity is monotonically decreasing, $Gr(n+1)\geq Gr(m+1)$. Assume to the contrary that $w(m,s')\in(-Gr(n+1),0)$. This yields that $Gr(n+1)>w(m,s')>0$. But $w(m,s')\geq Gr(m+1)>0$, a contradiction which established the claim. 
\end{proof}

\begin{lemma}\label{lem:NumberTheoreticOmitPrelim2}
	Let $\f{B}\in{\Kfin}$ with $\delta(\f{B})>0$ and $|B|\geq m_\text{suff}$. 
	\begin{enumerate}
		\item Let $\epsilon>0$ and assume that $\z\alpha$ is not rational. Then for any $E\in L$ such that $\z\alpha_{E}$ is irrational, there are infinitely many good pairs $(n,s)$ for $\f{B}$ such that $0<-w(n,s)<\epsilon$ and $s$ is such that $s(E^*)>0$ if and only if $E^*=E$ for all $E\in L$.  
		\item If $\z\alpha$ is rational, then we may obtain infinitely many good pairs $(n,s)$ for $\f{B}$ such that $-w(n,s)=1/c$. 
	\end{enumerate}
\end{lemma}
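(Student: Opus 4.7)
The plan is to reduce each case to a Diophantine fact from Appendix A and then verify the axioms of good pair for the resulting $(n,s)$. Throughout, I write $-w(n,s) = \sum_E \z\alpha_E s(E) - n$.

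For part (1), fix $E \in L$ with $\z\alpha_E$ irrational and look for pairs $(n,s)$ in which $s$ is supported only on $E$, so $s(E) = k$ for some $k \in \omega$ and $-w(n,s) = k\z\alpha_E - n$. Using the continued-fraction expansion of $\z\alpha_E$ from the Appendix, the convergents $(p_j, q_j)$ satisfying $q_j \z\alpha_E - p_j > 0$ form an infinite sequence with $q_j \z\alpha_E - p_j \to 0$, so taking $(n,k) = (p_j, q_j)$ for $j$ sufficiently large produces infinitely many candidates with $0 < -w(n,s) < \epsilon$. Acceptability is ensured by choosing $j$ large enough that $q_j \z\alpha_E - p_j < \min\{\delta(\f{B}), Gr_{\{E\}}(2), \epsilon\}$; the inequality $|s| = k \geq n$ holds because $\z\alpha_E < 1$ and $k\z\alpha_E > n$ force $k > n$. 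Condition (2) of goodness, $|s| \geq |B| + n - 1$, holds for large $j$ since $k - n \geq k(1 - \z\alpha_E)$ is unbounded as $k \to \infty$. The nontrivial content is condition (3), which unwinds to the assertion that no pair $(m, k')$ with $m \leq n$ and $0 \leq k' \leq k$ satisfies $0 < k'\z\alpha_E - m < k\z\alpha_E - n$; this is precisely the one-sided best-approximation property of the convergents $(p_j, q_j)$ from below, and is the key appeal to the Appendix.

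For part (2), set $a_E = c\z\alpha_E = (c/q_E) p_E \in \mathbb{Z}$, so that $-w(n,s) = 1/c$ becomes the linear Diophantine equation $\sum_E a_E s(E) - cn = 1$. I would first verify $\gcd\bigl(\{a_E : E \in L\} \cup \{c\}\bigr) = 1$: a valuation argument gives $\gcd(a_E, c) = c/q_E$ (using $\gcd(p_E, q_E) = 1$), and $\gcd_E(c/q_E) = 1$ follows from $c = \mathrm{lcm}(q_E)$ by comparing $p$-adic valuations at each prime. By Bezout (Appendix) integer solutions exist, and since incrementing $s(E)$ by $q_E$ and $n$ by $p_E$ is a homogeneous adjustment, one obtains infinitely many non-negative solutions with $|s|$ arbitrarily large. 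Acceptability is then immediate: $\delta(\f{B})$ and $Gr_{L_0}(2)$ both lie in $(1/c)\mathbb{Z}$ and are positive, hence each is $\geq 1/c$, and $|s| \geq n+1$ follows from $|s| \geq \sum s(E)\z\alpha_E = n + 1/c$ together with integrality of $|s| - n$. Condition (3) of goodness is automatic because every $w(m, s')$ lies in $(1/c)\mathbb{Z}$, so no such value can lie in the open interval $(-1/c, 0)$. For condition (2), iterating a homogeneous adjustment at some $E_0$ with $\z\alpha_{E_0} < 1$ increments $|s| - n$ by $q_{E_0} - p_{E_0} > 0$ and thus arranges $|s| \geq |B| + n - 1$; in the exceptional case where $\z\alpha_E = 1$ for every $E$, the standing hypothesis forces some $ar(E) \geq 3$, and one appeals instead to the alternate arity-based clause of condition (2).

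The main obstacle is Part (1): condition (3) of goodness demands a one-sided best-approximation statement in which both $m$ and $k'$ can be reduced jointly from $n$ and $k$, and one must confirm that the Appendix formulation of continued fractions delivers exactly this (the result that lower convergents are best lower approximations), rather than only the standard two-sided best-approximation property. Part (2) is comparatively mechanical, the main care being the gcd computation and the edge case $\z\alpha_E \equiv 1$.
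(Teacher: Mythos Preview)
Your approach is correct and close in spirit to the paper's, but the verification of condition~(3) in Part~(1) is organized quite differently, and this difference is exactly the point you flagged as the main obstacle.

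The paper does not appeal to the best-approximation property of convergents at all. Instead it works through granularity: with $L'=\{E\}$, it takes $n$ in the infinite set of ``drop points'' where $Gr_{L'}(n+1)<Gr_{L'}(k)$ for all $2\le k\le n$ (such $n$ exist in abundance since $Gr_{L'}(m)\to 0$), and then chooses $l_n$ so that $-w(n,s)=l_n\alpha-n=Gr_{L'}(n+1)$. Condition~(3) is then immediate from the \emph{definition} of granularity: any $w(m,s')=m-k'\alpha$ with $m\le n$ and $k'\alpha-m>0$ satisfies $k'\alpha-m\ge Gr_{L'}(n+1)=-w(n,s)$, so $w(m,s')\le w(n,s)$. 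This sidesteps your worry entirely, since the only input from Appendix~A is the convergence $Gr_{L'}(m)\to 0$, which follows from the error bounds stated there. Your route via convergents and best approximation is mathematically valid (the two-sided best-approximation theorem for convergents gives $|k'\alpha-m|\ge|q_j\alpha-p_j|$ for $1\le k'\le q_j$, and the one-sided statement you need follows), but that theorem is not among the facts recorded in the Appendix, so the paper's granularity device is what makes the argument self-contained.

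For Part~(2) the two arguments are closer: both exploit that all relevant quantities lie in $(1/c)\mathbb{Z}$, making condition~(3) vacuous. The paper again routes through granularity (choosing $n$ with $Gr_{L'}(n+1)=1/c$ for $L'=\{E:\z\alpha_E<1\}$) rather than solving the Diophantine equation directly as you do, and it splits off the $c=1$ case explicitly, taking $s$ supported on a single relation of arity $\ge 3$ with $|s|=n$. Your Diophantine-and-adjust approach works too; just note that the second alternative in clause~(2) of the definition of good pair should be read as $\sum_{ar(E)\ge 3}s(E)\ge|B|$ (the $t,r_i$ there refer to the template eventually built from $s$), which your arity-based fallback does secure once you iterate the homogeneous shift at some $E_0$ with $ar(E_0)\ge 3$.
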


\begin{proof}
	\noindent(1): Let $E\in L$ be such that $\z\alpha(E)$ is irrational. Let $L'=\{ E \}$ and let $\alpha=\z{\alpha}(E)$. Note that we may as well assume that $\epsilon\leq \min\{\delta(\f{B}),Gr_{L'}(2)\}$. As $\lim_{n}Gr_{L'}(n)=0$, there is an infinite set $A$ of positive integers such that $Gr_{L'}(n+1)<Gr_{L'}(k)$ for all $2\leq k\leq n$. For each $n\in A$, let $l_n$ be such that $Gr_{L'}(n+1)=l_n\alpha-n$. Since $\epsilon$, $|B|$ are fixed \textit{and} $\alpha<1$, all but finitely many $n\in A$ satisfy $0<l_n\alpha-n<\epsilon$ and $l_n\geq |B|+(n-1)$. Given such $n$, let $s$ be the $L$-collection that contains $l_n$ many $E$ relation symbols and no other relation symbols. It is immediate that by our choice of $n$ and $s$ that $(n,s)$ is a good pair with $0<-w(n,s)<\epsilon$ and that $s$ satisfies the other properties given in $(1)$.\\ 
	
	\noindent\textit{(2)} : Assume that $\z\alpha$ is rational. The proof now splits off into two cases depending on the value of $c$.
	
	First consider the case $c>1$: Then $Gr(n')=1/c<1$ for all sufficiently large $n'$. Note that $\delta(\f{B})=k/c$ for some $k\in\omega, k\neq 0$ and thus $\delta(\f{B})\geq 1/c$. Let $L'=\{L\in E : \z\alpha_E < 1\}$. Using Remark \ref{rmk:InfSolDioEqn} of the Appendix, there is an infinite set $A$ of positive integers $n$ such that $Gr_{L'}(n+1)=1/c$. For each $n\in A$, let $l_n:L'\rightarrow\omega$ be a function such that $Gr_{L'}(n+1)=\sum_{E\in L'}l_n(E)\alpha_{E} - n$. Since $|B|$ is fixed \textit{and} $\z\alpha_E<1$ for each $E\in L'$, all but finitely many $n\in A$ satisfy $\sum_{E\in L'}\z\alpha_E l_n(E) -n=1/c$ and $\sum_{E\in L'}l_n(E)\geq |B|+(n-1)$. Given such $n$, let $s$ be the $L$-collection that contains exactly $l_n(E)$ many $E$ relation symbols for $E\in L'$ and no other relation symbols. Now by our choice of $n,s$ it is immediate that $(n,s)$ is a good pair with $-w(n,s)=1/c$. 
	
	Now consider the case $c=1$: Now for each $E\in{L}$, $\z\alpha(E)=1$, $Gr(m)=1$ for all $m\geq 2$ and all finite structures have integer rank. Note that there is some $E\in L$ that has arity at least $3$ as $\z\alpha(E)=1$ for each $E\in L$ implies that arity of each relation symbol cannot be $2$. Fix such an $E\in L$ and let $L'=\{E\}$. Then for any $n\geq |B|+1$ take $s$ to be the $L$-collection with $n$ many $E$ relations and no other relations. A routine verification shows that $\langle n,s \rangle$ is a good pair. 
\end{proof}

We now put the previous results together to establish:  

\begin{lemma}\label{thm:OmitPrelim3}
	Let $\f{B}\in{\Kfin}$ with $\delta(\f{B})>0$ and $|B|\geq m_\text{suff}$. 
	\begin{enumerate}
		\item Let $\epsilon>0$ and assume that $\z\alpha$ is not rational. Now given any $E\in L$ such that $\z\alpha_{E}$ is irrational, we can construct infinitely many non-isomorphic $\f{D}\in\Kfin$ such that $(\f{B,D})$ is an essential minimal pair that satisfies $-\min\{\epsilon,\delta(\f{B})\}<\delta(\f{D/B})<0$ where the new relations that appear in $\f{D}$ that were not in $\f{B}$ are $E$ relations.
		\item If $\z\alpha$ is rational, then we can construct infinitely many non-isomorphic $\f{D}\in\Kfin$ such that $(\f{B,D})$ is an essential minimal pair that satisfies $\delta(\f{D/B})=-1/c$.
	\end{enumerate}   
\end{lemma}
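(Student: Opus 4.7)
The proof will be a direct synthesis of the two immediately preceding results, so the plan is mostly to carefully package them and then argue non-isomorphism by cardinality.

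For part $(1)$, assume $\z\alpha$ is not rational, fix an irrational $\z\alpha_E$, and let $\epsilon' = \min\{\epsilon,\delta(\f{B})\}$. First I would apply Lemma \ref{lem:NumberTheoreticOmitPrelim2}(1) with $\epsilon'$ in place of $\epsilon$ to produce an infinite family of good pairs $\langle n, s\rangle$ for $\f{B}$ such that $0<-w(n,s)<\epsilon'$ and $s$ consists only of copies of $E$. Since the set of such pairs is infinite and each $n$ appears with boundedly many $s$ (given the bound $-w(n,s)<\epsilon'$ and the fact that $\z\alpha_E<1$ forces $|s|$ to be controlled by $n$ and $\epsilon'$), I can extract an infinite subfamily with strictly increasing $n$, and in particular with $n\geq 3$. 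For each such pair, Corollary \ref{lem:OmitPrelim} yields an $\f{D}\in\Kfin$ with $(\f{B,D})$ an essential minimal pair and $\delta(\f{D/B})=w(n,s)$. Since $|D-B|=n$ varies, the extensions are pairwise non-isomorphic over $\f{B}$, and in fact non-isomorphic as abstract structures once $|B|$ is fixed. The condition on relation symbols in $\f{D}$ follows from the construction of Lemma \ref{lem:preOmitPrelim2}, because the template produced from $\langle n,s\rangle$ uses only symbols appearing in $s$, which here is just $\{E\}$.

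For part $(2)$, assume $\z\alpha$ is rational. I would apply Lemma \ref{lem:NumberTheoreticOmitPrelim2}(2) to obtain infinitely many good pairs $\langle n,s\rangle$ for $\f{B}$ with $-w(n,s)=1/c$, again passing to an infinite subfamily with $n\geq 3$ and strictly increasing. For each such pair, Corollary \ref{lem:OmitPrelim} produces an essential minimal pair $(\f{B,D})$ with $\delta(\f{D/B})=-1/c$; varying $n$ again yields the desired infinite family of pairwise non-isomorphic structures.

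The only mild subtlety — and the closest thing to an obstacle — is justifying that the good pairs given by Lemma \ref{lem:NumberTheoreticOmitPrelim2} yield infinitely many distinct values of $n$ (rather than being concentrated at a single $n$ with $s$ varying), which is what drives non-isomorphism. In both cases this is forced by the structure of Lemma \ref{lem:NumberTheoreticOmitPrelim2}: the pairs arise from an infinite set $A$ of positive integers, one pair per $n\in A$, so the index $n$ itself ranges over an infinite set. Once that is observed, the rest of the argument is just bookkeeping: the bound $-w(n,s)<\epsilon'\leq\delta(\f{B})$ in part $(1)$, and the exact value $-w(n,s)=1/c$ in part $(2)$, are inherited verbatim from $\delta(\f{D/B})=w(n,s)$, and the essential-minimal-pair property is precisely the content of Corollary \ref{lem:OmitPrelim}.
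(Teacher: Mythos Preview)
Your proposal is correct and follows essentially the same approach as the paper: apply Lemma \ref{lem:NumberTheoreticOmitPrelim2} to produce infinitely many good pairs, feed them into Corollary \ref{lem:OmitPrelim}, and obtain non-isomorphism by varying $n=|D-B|$. The paper's proof is terser---it simply says ``vary our choice of good pairs''---whereas you make explicit the observation that the good pairs in Lemma \ref{lem:NumberTheoreticOmitPrelim2} are indexed by an infinite set of distinct values of $n$, and you also spell out why only $E$-relations appear (namely, the template built in Lemma \ref{lem:preOmitPrelim2} draws $t$ and each $r_i$ from $s$).
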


\begin{proof}
	Use Lemma \ref{lem:NumberTheoreticOmitPrelim2} to obtain a good pair $(n,s)$ for $\f{B}$ that satisfies $0<-w(n,s)\leq Gr(m)$. Now use Corollary \ref{lem:OmitPrelim} to construct an essential minimal pair $(\f{B,D})$ with $w(n,s) = \delta(\f{D/B})<0$. As $(n,s)$ is a good pair, $\f{D}\in\Kfin$.  We can obtain infinitely many $\f{D}$ as required by varying our choice of good pairs. Further $(1), (2)$ can be obtained by choosing suitable good pairs using $(1), (2)$ (respectively) of Lemma \ref{lem:NumberTheoreticOmitPrelim2}.      
\end{proof}

The two clauses of the following lemma illustrate some routine argument patterns that can be used in constructing new structures by taking free joins. It will also yield a substantial part of Theorem \ref{thm:OmitPrelim2} and Lemma \ref{lem:Omit1}. 

\begin{lemma}\label{lem:NewStrsViaFreeJoins}
	Let $\f{A,B}\in\Kfin$ with $\f{A\leq B}$. Assume that $(\f{B,C})$ is an essential minimal pair and let $\gamma=-\delta(\f{C/B})$. Then \begin{enumerate}
		\item We can construct $\f{D}\in\Kfin$ such that $\f{B\subseteq D}$, $\f{A\leq D}$ and $0\leq \delta(D/A)<\gamma$. Further if $(\f{B,G})$ is a minimal pair with $|G|<|C|$, then $\f{G}$ does not embed into $\f{D}$ over $\f{B}$.  
		\item Assume that $\delta(\f{A})\geq \gamma$.  Then we can construct $\f{D}\in\Kfin$ such that $\f{B\subseteq D}$, $(\f{A,D})$ is an essential minimal pair that satisfies $0>\delta(\f{D/A})\geq -\gamma$
	\end{enumerate} 	
\end{lemma}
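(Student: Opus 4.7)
The natural candidate for $\f{D}$ in both parts is the free join $\f{D} = \oplus_{i<k}\f{C}_i$ over $\f{B}$ of $k$ isomorphic copies of $\f{C}$, with $k$ to be chosen. The key computational device is that any substructure $\f{A'}\subseteq\f{D}$ decomposes, via $Y = A'\cap B$ and $X_i = A'\cap (C_i - B)$, as the free join over $\f{Y}$ of the structures $\f{Y\cup X_i}$, so by Fact \ref{fact:delMain}(4),
\[
\delta(\f{A'}) \;=\; \delta(\f{Y}) \;+\; \sum_{i<k} \delta(\f{Y\cup X_i}/\f{Y}).
\]
Because $(\f{B,C})$ is an \emph{essential} minimal pair, every summand on the right is non-negative \emph{unless} $Y=B$ and $X_i = C_i - B$ simultaneously, in which case it equals $-\gamma$. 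This dichotomy drives every verification below.

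Write $\beta = \delta(\f{B/A}) \geq 0$. For part (1), take $k = \lfloor \beta/\gamma\rfloor$ (possibly zero). Applied to $\f{A'} = \f{D}$ the formula yields $\delta(\f{D/A}) = \beta - k\gamma \in [0,\gamma)$. For $\f{A \leq D}$, consider any $\f{A\subseteq A'\subseteq D}$: if $Y \neq B$ then $\delta(\f{A'}) \geq \delta(\f{Y}) \geq \delta(\f{A})$ (second inequality by $\f{A\leq B}$); if $Y=B$ then at most $k$ fibers are full, so $\delta(\f{A'}) \geq \delta(\f{B}) - k\gamma = \delta(\f{A}) + \beta - k\gamma \geq \delta(\f{A})$. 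An entirely parallel split shows $\delta(\f{D''})\geq 0$ for every $\f{D''}\subseteq \f{D}$, so $\f{D}\in\Kfin$. For part (2), take $k = \lfloor \beta/\gamma\rfloor + 1$, giving $\delta(\f{D/A}) = \beta - k\gamma \in [-\gamma,0)$. The hypothesis $\delta(\f{A})\geq \gamma$ is exactly what is needed to guarantee $\delta(\f{D})\geq \delta(\f{A}) - \gamma \geq 0$; every proper $\f{A\subseteq A'\subsetneq D}$ has at most $k-1\leq \beta/\gamma$ full fibers, so the same estimate yields $\f{A\leq A'}$. For the essentiality clause $\delta(\f{D'/D'\cap A})\geq 0$ with $\f{D'\subsetneq D}$, the same case split on $Y$ works, invoking Remark \ref{fact:ExtSmthFraClass} (to conclude $\f{A\cap Y \leq Y}$) when $Y\neq B$.

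For the embedding clause in (1): suppose $\f{G}$ embeds into $\f{D}$ fixing $\f{B}$. Because the $\f{C}_i$ are freely joined over $\f{B}$, the image decomposes as the free join over $\f{B}$ of its fibers $\f{B\cup X_i}$. If more than one fiber is non-empty, each $\f{B\cup X_i}$ is a \emph{proper} substructure of $\f{G}$, so minimality of $(\f{B,G})$ gives $\f{B\leq B\cup X_i}$ and hence $\delta(\f{G/B}) = \sum_i \delta(\f{B\cup X_i}/\f{B}) \geq 0$, contradicting $\f{B\not\leq G}$. Thus $\f{G}$ lies in a single copy $\f{C}_i$, and since $|G|<|C|$ this inclusion is proper, so essentiality of $(\f{B,C})$ forces $\delta(\f{G/B})\geq 0$, a contradiction. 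The main obstacle throughout is not conceptual but combinatorial — keeping precise track of when the $-\gamma$ contribution activates — which is what makes the bookkeeping on $k$ tight in (2) and forces the use of the hypothesis $\delta(\f{A})\geq \gamma$.
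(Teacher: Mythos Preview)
Your proof is correct and follows essentially the same approach as the paper: both parts take $\f{D}$ to be the free join over $\f{B}$ of the appropriate number of copies of $\f{C}$ ($\lfloor\beta/\gamma\rfloor$ copies for (1), $\lfloor\beta/\gamma\rfloor+1$ for (2)), and the verifications proceed by the same case split on whether $\f{D'}\cap\f{B}=\f{B}$. Your write-up is in fact slightly more careful than the paper's in places---you explicitly verify the essentiality clause in part (2), which the paper leaves implicit, and your decomposition formula makes the bookkeeping cleaner.
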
   

\begin{proof}
	\noindent  Note that there is some \textit{non-negative} integer $k$ such that $k\gamma\leq \delta(\f{B/A}) < (k+1)\gamma$. Let $\f{D}$ be the free join of $k$-copies of $\f{C}$ over $\f{B}$ and enumerate the copies of $\f{C}$ in $\f{B}$ by $\{\f{C}_i:1\leq i\leq k \}$ (with $\f{B=D}$ if $k=0$). We now show that $\f{D}$ has the required properties. We begin by establishing some notation: Let $\f{D'\subseteq{D}}$ be a nonempty substructure of $\f{D}$ and let $\f{C}_{i}'=\f{C}_{i}\cap{\f{D}'}$ and $\f{B'}=\f{D'\cap{B}}$.
	
	Clearly $\f{B\subseteq D}$ and $\f{D}\in K_L$. By Remark \ref{rmk:ConstrVerifSimpl}, $\f{D}\in\Kfin$ follows if you show that $\f{A\leq D}$. This is equivalent to establishing $\delta(\f{D'/A})\geq 0$ in the case that $\f{A\subseteq D'}$. So we will assume that $\f{A\subseteq D'}$. Since $\f{A\leq B}$, if $\f{ D'\subseteq B}$, we have the required result. So consider $\f{D'}\nsubseteq \f{B}$. We may view $\f{D'}$ as the free join of $\f{D}_{i}'$ over $\f{B}'$. Note that  $\delta(\f{D'/B'})=\sum_{i=1}^k\delta(\f{C}_i'/\f{B}')$ by  $(4)$ of Fact \ref{lem:BasicDel3}. Since $(\f{B,C})$ are essential minimal pairs, it follows that if $\f{B'\neq{B}}$, then $\delta(\f{C}_{i}'/\f{B'})\geq{0}$. Further if $\f{B'=B}$, then $\delta(\f{D'/B})\geq{-k\gamma}$ with equality holding if and only if $\f{D'=D}$. 
	
	Assume that $\f{A\subseteq{D'}\subseteq{D}}$. We need to establish that  $\delta(\f{D'/A})\geq{0}$. First consider the case where $\f{B'\neq{B}}$. Now $\delta(\f{D}'/\f{B'})\geq{0}$. Further $\delta(\f{D'/A})=\delta(\f{D'/B'})+\delta(\f{B'/A})$. Since $\f{A\leq{B}}$ and $\f{A\subseteq{B'}\subseteq{B}}$, we have that $\delta(\f{B'/A})\geq{0}$. Thus $\delta(\f{D'/A})\geq{0}$. Now consider the case  $\f{B'=B}$. In this case we have that  $\delta(\f{D'/A})=\delta(\f{D'/B})+\delta(\f{B/A})\geq{-k\gamma+\delta(\f{B/A})}\geq{0}$. Hence $\f{A\leq D}$.
	
	A simple calculation yields  $\delta(\f{D/A})=-k\gamma+\delta(\f{B/A})<\gamma$. We now show that \textit{no} $\f{G}$ such that $(\f{B,G})$ is a minimal pair with $|G|<|C|$ embeds into $\f{D}$ over $\f{B}$. Assume such a minimal pair did embed into $\f{D}$ over $\f{B}$ and let its image be $\f{D}'$. Now $\delta(\f{D'/B})=\sum_{i=1}^k\delta(\f{C}_i'/\f{B})$. But \textit{each} $\delta(\f{C}_i'/\f{B})\geq 0$ unless $\f{C}_i'=\f{C}$. Thus $|D'|\geq |C|$, a contradiction.\\
	
	\noindent (2) Note that there is some \textit{non-negative} integer $k$ such that $k\gamma\leq \delta(\f{B/A})<(k+1)\gamma$. Consider the structure $\f{D}$ which is the free join of $k+1$-copies of $\f{C}$ over $\f{A}$ where . Enumerate these copies of $\f{C}$ as $\f{C}_{1}\ldots \f{C}_{k+1}$. Let $\f{D'\subseteq{D}}$ be non-empty, $\f{B'}=\f{B\cap{D'}}$ and $\f{C}_i'=\f{C}\cap{D}'$ 
	
	We begin by showing that $\f{D}\in\Kfin$. We need to show that $\delta(\f{D'})\geq 0$. As this is immediate when $\f{D'\subseteq B}$, we may as well assume that this is not the case. Now as in $(1)$,  $\delta(\f{D'/B'})=\sum_{i=1}^{k+1}\delta(\f{C}_i'/\f{B'})$. As $(\f{B,C})$ is an essential minimal pair we need only consider $\f{B'=B}$ (the other case follows easily as in $(1)$). Then $\delta(\f{D'/B})\geq -(k+1)\gamma$. But by our choice of $k$ and using the assumption $\delta(\f{A})\geq \gamma$, we see that $\delta(\f{B})\geq -(k+1)\gamma$ and hence $\delta(\f{D'})$. Thus $\f{D'}\in\Kfin$. 	 
	
	Now we show that $(\f{A,D})$ is an essential minimal pair with $0>\delta(\f{D/A})\geq -\gamma$. So assume that $\f{A\subseteq{D'}\subsetneq{D^*}}$. If $\f{B'\neq{B}}$, then $\delta(\f{D'/A})=\delta(\f{D'/B'})+\delta(\f{B'/A})\geq{0}$. So assume that $\f{B'=B}$. Thus $\delta(\f{D'/A})= \delta(\f{D'/B}) + k\gamma$. Since each $(\f{B,C_i})$ is an essential minimal pair, it follows that $\delta(\f{D'/B})\geq -k\gamma$ \textit{unless} $\f{D'=D}$. Thus $(\f{A,D})$ forms an essential minimal pair with the required properties. 	   
\end{proof}

Finally we are in a position to prove one of the key result of this section:

\begin{thm}\label{lem:InfMinPa}\label{thm:OmitPrelim2}
	Let $\f{A}\in\Kfin$ with $\delta(\f{A})>0$. \begin{enumerate}
		\item If $\z\alpha$ is not rational, then for any $\epsilon>0$, we can construct infinitely many non-isomorphic $\f{D}\in\Kfin$ such that $(\f{A,D})$  is an essential minimal pair that satisfies $-\epsilon<\delta(\f{D/A})<0$. 
		
		\item If $\z\alpha$ is rational, then we can construct infinitely many non-isomorphic $\f{D}\in\Kfin$ such that $(\f{A,D})$ is an essential minimal pair that satisfies $\delta(\f{D/A})=-1/c$. (Recall that $c$ denotes the least common multiple of the denominators of the $\z\alpha_E$).  
	\end{enumerate} 
\end{thm}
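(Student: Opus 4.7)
The plan is to reduce to Lemma \ref{thm:OmitPrelim3}, which gives essentially the same statement but under the additional hypothesis $|B|\geq m_\text{suff}$. Since Theorem \ref{thm:OmitPrelim2} places no size restriction on $\f{A}$, the first job is to pad $\f{A}$ up to that bound while preserving enough structure, and then to transfer the resulting extension back down to $\f{A}$.

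First I would fatten $\f{A}$. Let $\f{B}$ be the free join of $\f{A}$ with $m_\text{suff}$ isolated points (structures of size $1$ with no relations). By Fact \ref{lem:BasicDel3}(4), $\f{A}\leq\f{B}$ and $\delta(\f{B})=\delta(\f{A})+m_\text{suff}>0$; since each substructure of $\f{B}$ is the free join of a substructure of $\f{A}$ with finitely many isolated points, $\f{B}\in\Kfin$, and obviously $|B|\geq m_\text{suff}$. Now Lemma \ref{thm:OmitPrelim3} applies to $\f{B}$: in case (1), choosing $\epsilon':=\min\{\epsilon,\delta(\f{A})\}$, it produces infinitely many non-isomorphic $\f{C}\in\Kfin$ with $(\f{B,C})$ an essential minimal pair and $-\epsilon'<\delta(\f{C/B})<0$; in case (2), it produces infinitely many such $\f{C}$ with $\delta(\f{C/B})=-1/c$.

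Next I would transfer each such $\f{C}$ down to an essential minimal pair over $\f{A}$ via Lemma \ref{lem:NewStrsViaFreeJoins}(2). Setting $\gamma:=-\delta(\f{C/B})$, the hypothesis $\delta(\f{A})\geq\gamma$ of that lemma is met: in case (1) by our choice $\gamma<\epsilon'\leq\delta(\f{A})$, and in case (2) because $\delta(\f{A})$ is a positive integer multiple of $1/c$ (clearing denominators in $\delta(\f{A})=|A|-\sum\z\alpha_E N_E(\f{A})$), so $\delta(\f{A})\geq 1/c=\gamma$. Lemma \ref{lem:NewStrsViaFreeJoins}(2) then delivers $\f{D}\in\Kfin$ such that $(\f{A,D})$ is an essential minimal pair with $0>\delta(\f{D/A})\geq-\gamma$. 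In case (1) this gives $-\epsilon<\delta(\f{D/A})<0$ as required. In case (2) the rank $\delta(\f{D/A})$ is likewise an integer multiple of $1/c$ and lies in $[-1/c,0)$, forcing $\delta(\f{D/A})=-1/c$.

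The one remaining point is to ensure we obtain \emph{infinitely many non-isomorphic} $\f{D}$. Lemma \ref{thm:OmitPrelim3} yields $\f{C}$'s of unbounded cardinality (the parameter $n$ in the underlying good pairs is unbounded), and in the construction of Lemma \ref{lem:NewStrsViaFreeJoins}(2) the extension $\f{D}$ contains $\f{C}$ as a substructure, so $|D|\geq|C|$ is unbounded as well. Thus the family of resulting $\f{D}$'s contains infinitely many isomorphism types. I expect the only subtlety is the bookkeeping in Step 1 to guarantee $\f{A}\leq\f{B}$ with $\delta(\f{B})>0$ and $|B|\geq m_\text{suff}$ simultaneously, and the denominator-tracking in the rational case, both of which are routine.
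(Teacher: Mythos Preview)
Your proposal is correct and follows essentially the same route as the paper: pad $\f{A}$ with $m_\text{suff}$ relationless points to obtain $\f{B}$ with $\f{A}\leq\f{B}$, invoke Lemma~\ref{thm:OmitPrelim3} to get an essential minimal pair $(\f{B},\f{C})$ with suitably small drop, and then apply Lemma~\ref{lem:NewStrsViaFreeJoins}(2) to transfer this to an essential minimal pair over $\f{A}$. The only cosmetic difference is that the paper first disposes of the case $|A|\geq m_\text{suff}$ directly and pads only when $|A|<m_\text{suff}$, whereas you pad unconditionally; your extra details on the $1/c$ denominator check and on obtaining infinitely many isomorphism types are things the paper leaves to the reader.
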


\begin{proof}
	
	For $|A|\geq m_\text{suff}$, the required results are immediate from Lemma \ref{thm:OmitPrelim3}. So assume that $|A|<m_\text{suff}$. Let $\f{A}_0$ be an $L$-structure with $m_\text{suff}$ many points such that no relations hold on $A_0$ and take $\f{B}=\f{A}\oplus{\f{A}}_0$. Clearly $\f{A\leq B}$. Using Theorem \ref{thm:OmitPrelim3} fix a $\f{C}$ such that $(\f{B,C})$ is an essential minimal pair $\f{C}\in\Kfin$. Note that if $\z\alpha_E$ is irrational for some $E\in L$ and $\epsilon>0$, then we may assume that $-\min\{\epsilon,\delta(\f{A})\}<\delta(\f{C/B})<0$ and if $\z\alpha$ is rational, then we may assume $\delta(\f{C/B})=-1/c$. By using $(2)$ of Lemma \ref{lem:NewStrsViaFreeJoins}, we obtain a required structure $\f{D}$. We observe that the non-isomorphic $\f{D}$ may be obtained by varying our choice of $\f{C}$ and leave it to the reader to verify that in the case $\z\alpha$ is rational, we have $\delta(\f{D/A})=-1/c$ as claimed. 	  
\end{proof}

\subsection{Coherence and rank $0$ structures}\label{subsec:Coherence}

This section is dedicated to building finite extensions of rank $0$. Our goal is to show that if $\z\alpha$ is coherent, then for any $\f{B}\in\Kfin$ with $\delta(\f{B})>0$, there is some $\f{D}\in\Kfin$ with $\f{B\subseteq D}$ such that $\delta(\f{D})=0$. If $\z\alpha$ is rational, this is easily achieved by repeated use of $(2)$ of Theorem \ref{lem:InfMinPa}. Thus we focus on the case that $\z\alpha$ is coherent but not rational. 

\begin{defn}\label{defn:softbound}
	Let $\z\alpha$ be coherent but not rational. Let $\beta(\z\alpha)=\min\{\delta(\f{A}), Gr(2):\f{A}\in\Kfin, \delta(\f{A})>0 \text{ and } |A|<m_\text{suff} \}$. 	
\end{defn}

\begin{remark}\label{rmk:ClarOfSoftBound}
	Note that $\beta(\z\alpha)>0$. Further if $\f{B}\in\Kfin$ is such that  $0<\delta(\f{B})<\beta(\z\alpha)$, then $|B|\geq m_\text{suff}$.
\end{remark}

\begin{prop}\label{prop:justifyZeroSet}
	Let $\f{B}\in\Kfin$. Then there is some $\f{Z}\subseteq B$ such that $\delta(\f{Z})=0$ and if $\f{C}\subseteq\f{B}$ is such that $\delta(\f{C})=0$, then $\f{C\subseteq Z}$.
\end{prop}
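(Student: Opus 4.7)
The plan is to exhibit $\f{Z}$ as the union of all rank-zero substructures of $\f{B}$. The collection $\mathcal{F}=\{\f{C}\subseteq \f{B}:\delta(\f{C})=0\}$ is non-empty because $\emptyset\in\mathcal{F}$ by convention, and it is finite because $\f{B}$ is finite. Thus it suffices to show that $\mathcal{F}$ is closed under pairwise union; taking $\f{Z}$ to be the union of all members of $\mathcal{F}$ then yields the result.

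To show closure under union, I would use the submodularity of $\delta$, which is a direct consequence of clause (1) of Fact \ref{lem:BasicDel1}. Concretely, for $\f{C}_1,\f{C}_2\subseteq\f{B}$, applying the identity $\delta(\f{C}_2/\f{C}_1)=\delta(\f{C}_2)-\delta(\f{C}_1\cap \f{C}_2)-e(C_1-C_2,C_1\cap C_2,C_2-C_1)$ and rearranging gives
\[
\delta(\f{C}_1\f{C}_2)+\delta(\f{C}_1\cap \f{C}_2)=\delta(\f{C}_1)+\delta(\f{C}_2)-e(C_1-C_2,C_1\cap C_2,C_2-C_1)\leq \delta(\f{C}_1)+\delta(\f{C}_2).
\]

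Now assume $\f{C}_1,\f{C}_2\in\mathcal{F}$. The displayed inequality forces $\delta(\f{C}_1\f{C}_2)+\delta(\f{C}_1\cap \f{C}_2)\leq 0$. Since $\f{C}_1\f{C}_2$ and $\f{C}_1\cap \f{C}_2$ are substructures of $\f{B}\in\Kfin$, each has non-negative $\delta$, so both terms must equal $0$. In particular, $\f{C}_1\f{C}_2\in\mathcal{F}$. By induction, $\mathcal{F}$ is closed under arbitrary finite unions, so letting $\f{Z}$ be the union of all its members produces an element of $\mathcal{F}$ that contains every $\f{C}\in\mathcal{F}$, exactly as required.

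I do not anticipate any serious obstacle: the content is just submodularity of $\delta$ together with the fact that substructures of elements of $\Kfin$ have non-negative rank. The only mild care needed is in applying Fact \ref{lem:BasicDel1}(1) to extract the submodular inequality, and in noting that the empty structure guarantees $\mathcal{F}\neq\emptyset$ so that $\f{Z}$ is well-defined even when $\f{B}$ has no non-trivial rank-zero substructure.
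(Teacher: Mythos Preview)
Your argument is correct and essentially coincides with the paper's proof: both establish that the family of rank-zero substructures is closed under union via submodularity of $\delta$, then take the maximal element. The only cosmetic difference is that you extract submodularity from clause~(1) of Fact~\ref{lem:BasicDel1}, while the paper invokes clause~(3) of the same Fact (with base $\emptyset$) to get $\delta(\f{A}\f{C})\leq\delta(\f{A})+\delta(\f{C})$ directly.
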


\begin{proof}
	Let $\f{B}\in\Kfin$ and let $\f{A,C}\subseteq{B}$ with $\delta(\f{A})=\delta(\f{C})=0$. Let $\f{D}$ be the join of $\f{A,C}$ in $\f{B}$. Now $0\leq\delta(\f{D})\leq \delta(\f{A})+\delta(\f{C})=0$ by $(3)$ of Fact \ref{lem:SubModularityOverBase}. Thus there is a unique maximal (with respect to $\subseteq$) $\f{Z\subseteq B}$ such that $\delta(\f{Z})=0$.
\end{proof}

\begin{defn}\label{defn:ZeroSet}
	Let $\f{B}\in\Kfin$. The unique maximal (with respect to $\subseteq$) $\f{Z\subseteq B}$ such that $\delta(\f{Z})=0$ will be called the \textit{zero set of} $\f{B}$ and we denote $\f{Z}$ by $\f{Z}_\f{B}$. We will let $Z_B$ denote the universe of $\f{Z_B}$.   
\end{defn}

\begin{lemma}\label{lem:ZeroExtForSmallRank}
	Let ${\z\alpha}$ be coherent and assume that $\z\alpha$ is not rational. Let $\f{A}\in\Kfin$ with $\beta(\z\alpha)>\delta(\f{A})>0$. Then there exists $\f{A^*}\in\Kfin$ such that $\f{A^*\supseteq A}$, $0\leq \delta(\f{A}^*)<\beta(\z\alpha)$ and $|A^*-Z_{A^*}|<|A-Z_A|$.
\end{lemma}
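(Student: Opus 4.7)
The plan is to pick a point $a\in A-Z_A$ and set $V:=\icl_A(Z_A\cup\{a\})$, which is a strong substructure of $\f{A}$ (i.e.\ $V\leq \f{A}$) containing $Z_A\cup\{a\}$. Setting $\gamma:=\delta(V)$, we have $0<\gamma\leq\delta(\f{A})$: strict positivity because $V\supsetneq Z_A$ and $Z_A$ is the unique maximal rank-$0$ subset of $\f{A}$ (Proposition \ref{prop:justifyZeroSet}), and $\gamma\leq\delta(\f{A})$ because $V\leq\f{A}$ applied to $V'=A$. The plan is then to construct a finite extension $\f{W}\in\Kfin$ of $V$ with $\delta(\f{W})=0$ and to take $\f{A^*}:=\f{A}\oplus_V\f{W}$. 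By Fact \ref{lem:FullAmalg1}, $\f{A^*}\in\Kfin$ and $\f{W}\leq\f{A^*}$; the free-join identity gives $\delta(\f{A^*})=\delta(\f{A})-\gamma\in[0,\beta(\z\alpha))$. Since $\f{W}$ is a rank-$0$ subset of $\f{A^*}$, maximality of $Z_{A^*}$ forces $\f{W}\subseteq Z_{A^*}$, and hence $|A^*-Z_{A^*}|\leq|A^*|-|W|=|A|-|V|<|A|-|Z_A|=|A-Z_A|$.

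For the construction of $\f{W}$ I use coherence. Fix positive integers $(m_E)_{E\in L}$ with $q:=\sum_E m_E\z\alpha_E=u/v\in\mathbb{Q}_{>0}$ in lowest terms, and set $n_E:=N_E(V)$ so that $\gamma=|V|-\sum_E n_E\z\alpha_E$. For $j$ sufficiently large, the values $k:=ju-|V|\geq 1$ and $l_E:=jvm_E-n_E\geq 0$ satisfy $\sum_E(l_E+n_E)\z\alpha_E=k+|V|$. Equivalently, adjoining $k$ new vertices $d_1,\ldots,d_k$ to $V$ together with $l_E$ many $E$-relations, each involving at least one new vertex, yields a structure of rank exactly $0$ over $\emptyset$. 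I then place these relations template-style, following Lemmas \ref{lem:ExisOfUnQualThtExt} and \ref{lem:preOmitPrelim2}, so that every substructure of $\f{W}$ has non-negative rank; the bound $\delta(\f{A})<\beta(\z\alpha)$ (whence $|A|\geq m_\text{suff}$) guarantees enough ambient room for the placement.

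The main obstacle lies in this construction of $\f{W}$: a single large coherence-packet extension as above may fail to correspond to a good pair in the sense of Definition \ref{defn:GoodPairs}, because the granularity clause (3) can fail once $k$ is large — recall that in the coherent-non-rational case $Gr(m)\to 0$ by Lemma \ref{lem:granularity}, so $Gr(k+1)$ drops below $\gamma$ for $j$ large. The remedy is a two-phase construction. First, iterate Theorem \ref{thm:OmitPrelim2}(1) to extend $V$ by essential minimal pairs with very small negative drops, accumulating a cumulative drop close to $-\gamma$ while keeping every intermediate structure in $\Kfin$ and preserving the strong-substructure relationship to $V$. Second, use a single coherence-packet extension of appropriate size to close out the residual drop exactly, this time within the granularity bound so that a good pair exists. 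Strong-substructure bookkeeping through all stages ensures that the final amalgamation with $\f{A}$ continues to satisfy the hypotheses of Fact \ref{lem:FullAmalg1}, yielding the required $\f{A^*}$.
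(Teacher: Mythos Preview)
Your overall framework coincides with the paper's: pick a strong substructure of $\f A$ properly containing $Z_A$, build a rank-$0$ extension of it, and freely amalgamate with $\f A$. The decisive divergence is in \emph{which} substructure to pick. The paper does not take $\icl_A(Z_A\cup\{a\})$; it takes $\f B\subseteq\f A$ with $Z_A\subsetneq\f B$ and $\gamma=\delta(\f B)$ \emph{minimal} among all such substructures. Minimality yields the key structural fact that every $\f B'\subseteq\f B$ with $\f B'\not\subseteq Z_A$ satisfies $\delta(\f B')\geq\gamma$. With this in hand a single \emph{acceptable} (not good) coherence template suffices: one attaches a single anchor relation through a fixed $b^*\in B-Z_A$ and the last new vertex $d_n$, and a three-case analysis (via $(1.b)$, $(1.c)$ of Lemma~\ref{lem:preOmitPrelim} together with the bound $\delta(\f B')\geq\gamma$ when $b^*\in\f B'$) shows $\f D\in\Kfin$ directly. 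No two-phase construction is needed.

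Your $V=\icl_A(Z_A\cup\{a\})$ need not enjoy this minimality property, and you correctly sense that the one-shot argument breaks; but the two-phase remedy has a genuine gap. In Phase~2 you need a \emph{good} pair $\langle n,s\rangle$ for $V_m$ with $-w(n,s)=\rho$; goodness requires covering $V_m$ (so $|s|\geq |V_m|+n-1$) and, via Lemma~\ref{lem:GoodPairsAndGranularity}, $\rho\leq Gr(n+1)$. Your coherence packet has $s(E)=jvm_E-N_E(V_m)$ and $n=ju-|V_m|$, so non-negativity of each $s(E)$ forces $j\geq\max_E N_E(V_m)/(vm_E)$. Phase~1 inflates both $|V_m|$ and the $N_E(V_m)$, and the latter at rate roughly $1/\z\alpha_E$ relative to the former (since an essential minimal pair with small drop over $E$-relations adds about $1/\z\alpha_E$ relations per new vertex); this can drive $j$, hence $n$, upward and $Gr(n+1)$ downward faster than $\rho$ shrinks. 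You have not shown the covering and granularity constraints are simultaneously satisfiable, and nothing in the proposal controls this tension. The paper's minimal-rank choice of $\f B$ sidesteps the entire issue and is the missing idea here.
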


\begin{proof}
	Choose $\f{B}\subseteq \f{A}$ such that  $\f{Z}_\f{A}\subsetneq\f{B}\subseteq\f{A}$ and  $\gamma:=\delta(\f{B})$ is least possible. Clearly $\gamma>0$ as $\f{Z_A\subsetneq B}$, $\f{B\leq A}$ as the rank of $\f{B}$ is minimal and $|B|\geq m_\text{suff}$ as $\gamma\leq \delta(\f{A})<\beta(\z\alpha)$. Further using $(2)$ of Fact \ref{lem:BasicDel1}, it follows that for any $\f{B'\subseteq B}$, either $\f{B'\subseteq Z_A}$ or $\delta(\f{B'})\geq\gamma$. We construct $\f{A^*}$ by taking a free join of $\f{A}$ over $\f{B}$ with a suitably constructed structure $\f{D}\in\Kfin$ with $\f{B\subseteq D}$.   
	
	%Proof of last line in claim: Let $\f{B'\subseteq {B}}$. It suffices to consider $\f{B'\nsubseteq Z_A}$. Note that if $\f{Z_A\subsetneq B'\subseteq B}$, then $\delta(\f{B'})\geq \gamma$ by our choice of $\f{B}$. So assume that $\f{Z_A\nsubseteq B'}$. Using $(2)$ of Theorem \ref{lem:BasicDel1}, we see that $\delta(\f{B'/Z_A\cap{B'}})\geq\delta({B'Z_A/Z_A})=\delta(B'Z_A)\geq\gamma$. As $\delta(\f{B'})\geq \delta(\f{B'/Z_A\cap{B'}})$, we see that $\delta(\f{B'})\geq \delta(\f{A})$. 
	
	Now as ${\z\alpha}$ is coherent there are infinitely many positive integers $ \langle n',m_E'\rangle_{E\in L}$ such that $n'-\sum_{E\in L}m_E'\z\alpha_E=0$. Using the fact that $\gamma=\delta(\f{C})$, we obtain that $\delta(\f{C})=n_0-\sum_{E\in L}m_0(E)\z\alpha_E$ for some non-negative integers $\langle n_0,m_0(E)\rangle_{E\in L}$. Hence we now obtain that there are infinitely many positive integers $\langle n'',m_E''\rangle_{E\in L}$ such that $n''-\sum_{E\in L}m_E''\z\alpha_E=-\gamma$. Thus we can construct acceptable $\langle n, s \rangle$ such that $w(n,s)=-\gamma$. Use Lemma \ref{lem:preOmitPrelim2} to construct an $n$-template $\Theta$ that corresponds to $\langle n,s \rangle$. 
	
	Fix any $b^*\in B-Z_{A}$. Let $\f{D}$ be an extension of $\f{B}$ by $\Theta$ with the additional property that there is some relation $E$ and $Q\in E^{\f{D}}$ with $\{b^*,d_n\}\subseteq Q$ where $d_n$ is as described in Notation \ref{notation:ThetaExtension}. As $\delta(\f{D/B})=-\gamma$ we have that $\delta(\f{D})=0$. We claim that $\f{D}\in\Kfin$. %Let $\f{D'\subseteq D}$ and let $\f{B'=B\cap D}$. It suffices to show that $\delta(\f{D'})\geq 0$.
	
	First note that if $\f{B\subseteq D'\subseteq D}$, then $\delta(\f{D'/B})\geq -\gamma$ by $(1.c)$  of Lemma \ref{lem:preOmitPrelim}.  Hence we obtain that $\delta(\f{D'})\geq 0$. Now choose $\f{D'\subseteq D}$ arbitrary and and let $\f{B'=B\cap D'}$. There are now three possibilities. First consider the case $d_n\notin\f{D'}$. By $(1.b)$ of Lemma \ref{lem:preOmitPrelim} we obtain that $\delta(\f{D'/B'})\geq 0$ and hence we obtain that $\delta(\f{D'})\geq 0$ as $\f{B'}\in\Kfin$. Now consider the case $b^*\in\f{D'}$. Then we have that $b^*\in B'$ and hence $\delta(\f{B'})\geq \gamma$. As $\delta(\f{D'/B'})\geq \delta(BD'/B)$ by $(2)$ of Fact \ref{lem:BasicDel1} and $\delta(BD'/B)\geq -\gamma$, we conclude that $\delta(\f{D'})\geq 0$. Finally consider the case $d_n\in D'$ but $b^*\notin D'$. Then we have that $Q\notin E^{\f{D'}}$. So $\delta(\f{D'/B'})\geq \delta(BD'/B)+\z\alpha(E)\geq 0$. As $\delta(B')\geq 0$, $\delta(\f{D'})\geq 0$.
	
	%%% DO NOT DELETE!!! %%% This is a justification for the first line of the above, using Notation \ref{notation:ThetaExtension}: Note that for $j<n$, $\delta(\f{D^j/B})\geq 0$ and $\z\alpha(E_j)-\delta(\f{D}_j/\f{B})\geq 0$ as $\Theta$ is acceptable. Further $\delta(\f{D}^{k+1,j}/\f{B})=\delta(\f{D}^j/\f{B})+\z\alpha(E_k)-\delta(\f{D}^k/\f{B})$ for $1\leq k <j\leq n$ and $-\gamma=\delta(\f{D}^j/\f{B})<0$ if and only if $j=n$. Since $\f{D'}$ can be written as the free join of several $\f{D}^{k,j}$ and over $\f{B}$ and at most one of the $\f{D}^{k,j}$ satisfies $0>\delta(\f{D}^{k,j}/\f{B})\geq -\gamma$, it follows that $\delta(\f{D'/B})\geq -\gamma$.  
	
	Let $\f{A^*}$ be the free join $\f{D\oplus_{B}A}$. As $\f{B\leq A}$ and $\f{D}\in\Kfin$, by Fact \ref{lem:FullAmalg1}, we obtain that $\f{A^*}\in\Kfin$. Now $\delta(\f{A^*/B})=\delta(\f{A/B})+\delta(\f{D/B})=\delta(\f{A/B})-\gamma$ and hence $0\leq \delta(\f{A}^*)<\beta(\z\alpha)$.       
	
	Finally note that the universe of $\f{A^*}$ is $A\cup D$.  As $\delta(\f{D})=0$, we have that $\f{B\subseteq D\subseteq Z_{A^*}}$. As $b^*\in B-Z_{A}$, we conclude that $|A^*-Z_{A^*}|<|A-Z_A|$.    
\end{proof}

\begin{thm}\label{thm:AnnhilConstruc}
	Let ${\z\alpha}$ be coherent. Then given any $\f{A}\in{\Kfin}$ with $\delta(\f{A})>0$ there is   $\f{D}\in\Kfin$ such that $\f{D\supseteq A}$ and $\delta(\f{D})=0$.
\end{thm}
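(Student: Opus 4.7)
The argument splits according to whether $\z\alpha$ is rational.

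In the rational case, every rank is an integer multiple of $1/c$, so $\delta(\f{A})=k/c$ for some positive integer $k$. I iterate part $(2)$ of Theorem \ref{lem:InfMinPa}: starting from $\f{A}_0 = \f{A}$, at each step $n < k$ the rank $\delta(\f{A}_n) = (k-n)/c$ is positive, so the theorem applies and yields $\f{A}_{n+1} \in \Kfin$ with $\f{A}_n \subseteq \f{A}_{n+1}$ and $\delta(\f{A}_{n+1}) = \delta(\f{A}_n) - 1/c$. After $k$ applications I set $\f{D} = \f{A}_k$, which satisfies $\delta(\f{D})=0$.

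In the coherent non-rational case I use a two-phase reduction. In Phase 1, if $\delta(\f{A}) \geq \beta(\z\alpha)$, I iteratively apply part $(1)$ of Theorem \ref{lem:InfMinPa} to produce $\f{A}' \supseteq \f{A}$ in $\Kfin$ with $0 < \delta(\f{A}') < \beta(\z\alpha)$: coherence (via Lemma \ref{lem:NumberTheoreticOmitPrelim2}) supplies essential minimal pair extensions with drops of controlled positive size, and I accumulate enough such drops to enter the small-rank regime. In Phase 2, starting from $\f{B}_0 = \f{A}'$, I iterate Lemma \ref{lem:ZeroExtForSmallRank}: each application yields $\f{B}_{n+1} \supseteq \f{B}_n$ in $\Kfin$ with $0 \leq \delta(\f{B}_{n+1}) < \beta(\z\alpha)$ and $|B_{n+1} - Z_{B_{n+1}}| < |B_n - Z_{B_n}|$. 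Since $|B_n - Z_{B_n}|$ is a non-negative integer that strictly decreases, the iteration terminates after at most $|A' - Z_{A'}|$ steps at a structure $\f{D} = \f{B}_n$ with $D = Z_D$, i.e., $\delta(\f{D}) = 0$.

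The main obstacle is the Phase 1 reduction in the non-rational case. Theorem \ref{lem:InfMinPa}(1) guarantees the existence of essential minimal pair extensions with drops in $(-\epsilon,0)$ but places no lower bound on the magnitude, so a naive iteration might produce rank drops that shrink too rapidly to cross below $\beta(\z\alpha)$ in finitely many steps. Overcoming this requires exploiting the discrete structure of achievable drop values $-w(n,s)$ under coherence; in particular, using Lemma \ref{lem:NumberTheoreticOmitPrelim2}(1) together with the free-join construction of Lemma \ref{lem:NewStrsViaFreeJoins}(2), one can engineer drops whose sizes are bounded below by a fixed positive fraction of $\beta(\z\alpha)$, ensuring that finitely many iterations suffice to enter the regime governed by Lemma \ref{lem:ZeroExtForSmallRank}.
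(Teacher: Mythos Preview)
Your overall strategy matches the paper's: split into rational versus coherent non-rational, and in the latter case first reduce to rank below $\beta(\z\alpha)$ and then iterate Lemma~\ref{lem:ZeroExtForSmallRank}. The rational case and Phase~2 of the non-rational case are correct and essentially identical to the paper's argument (the paper free-joins $k$ copies of one minimal pair in the rational case rather than iterating, but this is cosmetic).

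The gap is in Phase~1. You correctly identify the obstacle---iterating Theorem~\ref{lem:InfMinPa}(1) risks a Zeno-type failure since nothing bounds the drop magnitudes from below---but the fix you propose does not deliver what you claim. Lemma~\ref{lem:NumberTheoreticOmitPrelim2}(1) produces good pairs with $-w(n,s)$ arbitrarily small, and Lemma~\ref{lem:NewStrsViaFreeJoins}(2) produces an essential minimal pair over $\f{A}$ whose drop lies in $(0,\gamma]$; neither yields a uniform \emph{lower} bound on the drop. The paper sidesteps the issue entirely by not iterating the theorem: it invokes Theorem~\ref{lem:InfMinPa}(1) \emph{once} to get a single essential minimal pair $(\f{A},\f{B})$ with drop $\gamma<\beta(\z\alpha)$, and then freely joins sufficiently many isomorphic copies of $\f{B}$ over $\f{A}$. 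Each copy subtracts exactly $\gamma$, so after $m=\lfloor \delta(\f{A})/\gamma\rfloor$ copies the rank lands in $(0,\gamma)\subseteq(0,\beta(\z\alpha))$, and the essential-minimal-pair property keeps the free join in $\Kfin$. Note also that coherence plays no role in Phase~1 (only irrationality of some $\z\alpha_E$ is used there); coherence enters solely through Lemma~\ref{lem:ZeroExtForSmallRank} in Phase~2.
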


\begin{proof}
	\noindent\textit{Case 1}: Assume that $\z\alpha$ is not rational. Now there is some $E\in L$ such that $\z\alpha_{E}$ is irrational. If $0\leq\delta(\f{A})<\beta(\z\alpha)$, then we are done. So assume that $\delta(\f{A})\geq{\beta(\z\alpha)}$. Since $\z\alpha_{E}$ is irrational, we can find a  minimal pair $(\f{A,B})$ with $\delta(\f{B/A})$ as small as we like using Theorem \ref{lem:InfMinPa}. Now fixing a minimal pair such that $\delta(\f{B/A})<\beta(\z\alpha)$ and taking sufficiently many isomorphic copies of $\f{B}$ freely joined over $\f{A}$, we can find a $\f{A^*\supseteq A}$ such that $\f{A^*}\in\Kfin$ and $0<\delta(\f{A^*})<\beta(\z\alpha)$. Let $l=|A^*-Z_{A^*}|$. By iterating Lemma \ref{lem:ZeroExtForSmallRank} at most $l$ times, we may construct $\f{D\supseteq A^*}$ with $\f{D}\in\Kfin$ such that $|D-Z_{D}|=0$, i.e. $\delta(\f{D})=0$.\\
	
	\noindent\textit{Case 2}: Assume that $\z\alpha$ is rational. Then $\delta(\f{A})=k/c$ for some positive integer $k$, where $c$ is the least common multiple the $q_E$ where $\z\alpha_E=p_E/q_E$ (in reduced form). As noted in Theorem \ref{lem:InfMinPa} we may create a minimal pair $\f{B}$ over $\f{A}$ such that $\delta(\f{B/A})=-1/c$ and for all $\f{B'}\subsetneq{B}$, $\delta(\f{B'/A\cap{B'}})\geq{0}$. Let $\f{D}=\oplus_{1\leq i\leq k}\f{B}_i/\f{A}$, the free join of $k$ isomorphic copies of $\f{B}$ over $\f{A}$. A routine argument now shows that  $\delta(\f{D})=0$ and that $\f{D}\in{\Kfin}$.
\end{proof}

\begin{remark}
	We note that we may construct infinitely many such non-isomorphic $\f{D}$ by varying our choice of $\f{A^*}$ or $\f{B}$ accordingly.  
\end{remark} 

%%% CAN THIS BE DELETED? %We begin with the following lemma captures a key property of extensions that cover the base over which the construction is being carried out. 

%\begin{lemma}\label{lem:RemovalOfPoint}
%	Let $\f{B}\in\Kfin$ be non-empty and let $\Theta$ be an $n$-template. Let $\f{D}$ be an extension of $\f{B}$ by $\Theta$ that covers $\f{B}$. Let $\f{D'\subsetneq D}$ and $\f{B'}=\f{B\cap D}$ with $D'-B=D-B$. Now $\delta(\f{D'}/\f{B}')\geq \delta(\f{D/B})+\z\alpha(E)$ where $E$ is some relation that contains the point $b\in B-B'$ and a point $d\in D-B$.  
%\end{lemma} 

%\begin{proof}
%	First recall our notation of $e(A), e(A,B)$ from Notation \ref{notation:e(A)}. We note that $\delta(\f{D'/B'})=\delta(D'-B')-e(D'-B',B')$ and $\delta(\f{D/B})= \delta(D-B)-e(D-B,B)$ by using $(1)$ of \ref{lem:BasicDel3}. Let $E$ be some relation that contains the point $b\in B-B'$ and $d\in D-B$. Such an $E$ exists as $\f{D}$ covers $\f{B}.$ It follows that $e(D-B,B)\geq e(D-B',B')+\z\alpha(E)$. Further $D'-B=D-B=D'-B'$. Hence $\delta(\f{D/B})\leq \delta(D'-B')-e(D'-B',B')-\z\alpha(E)$ and the result follows. 
%\end{proof}

\section{Quantifier elimination and the completeness of $\Sfin$}\label{sec:QuantElim}

In this section we begin by introducing a collection of $\forall\exists$-axioms that we denote by $\Sfin$ (see Definition \ref{defn:AEAxioms}). In Theorem \ref{thm:QuantElim} we observe that $\Sfin$ admits quantifier elimination down to the level of chain minimal extension formulas (see Definition \ref{defn:ExtensionFormula}).  This generalizes the results of Laskowski in \cite{Las1}. In Theorem \ref{Cor:SfinComplete} we collect useful results about $\Sfin$ including the fact that $\Sfin$ is the theory for the Baldwin-Shi hypergraph for $\z\alpha$. Lemma \ref{lem:CompleteTypesOverSets} gathers useful consequences of the quantifier elimination. Remark \ref{rmk:DOP} is out of character with the rest of this paper: we sketch a proof of the dimensional order property for $\Sfin$, again following ideas found in \cite{Las1}.

\begin{defn}\label{defn:AEAxioms}
	The theory $\Sfin$ is the smallest set of sentences insuring that if $\f{M}\models{\Sfin}$, then
	\begin{enumerate}
		\item $\f{M}\in{\z{\Kfin}}$, i.e. every finite substructure of $\f{M}$ is in $\Kfin$ 
		\item For all $\f{A\leq{B}}$ from $\Kfin$, every (isomorphic) embedding $f:\f{A\rightarrow{M}}$ extends to an embedding $g:\f{B\rightarrow{M}}$
	\end{enumerate}
\end{defn}

\begin{remark}\label{rmk:SfinIsAEAxiomatizable}
	We note that $\Sfin$ is a collection of $\forall\exists$-sentences. 
\end{remark}

\begin{notation} Let $\f{N}\in\z{K_L}$. Given  $\f{A}\in{K_L}$ with a fixed enumeration $\z{a}$ of $A$, we write $\Delta_\f{A}(\z{x})$ for the atomic diagram of $\f{A}$. Also for $\f{A},\f{B},\f{C}\in K_L$ with $\f{A}\subseteq{\f{B}}\subseteq{\f{C}}$ and fixed enumerations $\z{a}, \z{b}, \z{c}$ respectively with $\z{a}$ an initial segment of $\z{b}$ and $\z{b}$ an initial segment of $\z{c}$;  we let $\Delta_{\f{A,B}}(\z{x}, \z{y})$ the atomic diagram of $\f{B}$ with the universe of $\f{A}$ enumerated first according to the enumeration $\z{a}$. Similarly $\Delta_{\f{A,B,C}}(\z{x}, \z{y}, \z{z})$ will denote the atomic diagram of $\f{C}$ with the universe of $\f{A}$ enumerated first by $\z{x}$, the remainder $B-A$ by $\z{y}$ and then $C-B$ by $\z{z}$ according to the enumerations $\z{a}, \z{b}, \z{c}$.  
\end{notation}

\begin{defn}\label{defn:ExtensionFormula}
	Let $\f{A,B}\in{K}$ and assume $\f{A\subseteq{B}}$. Let $\Psi_{\f{A,B}}(\z{x})=\Delta_\f{A}(\z{x})\wedge\exists{\z{y}}\Delta_{\f{(A,B)}}{(\z{x},\z{y})}$. Such formulas are collectively called \textit{extension formulas} (over $\f{A}$). A \textit{chain minimal extension formula} is an extension formula $\Psi_{\f{A,B}}$ where $\f{B}$ us the union of a minimal chain over $\f{A}$. 
\end{defn}

\subsection{Some Preliminaries}\label{subsec:ImmediateConseqOfOmit}

This Section contains several Lemmas that will be needed in the proof of the quantifier elimination result of $\ref{thm:QuantElim}$. We begin by generalizing Proposition 4.2 of \cite{Las1}. Recall that if $\z\alpha$ is not rational, then $\lim_n Gr(n)=0$. Thus in the case $\z\alpha$ is not rational we may replace clause $(1)$ of the following lemma with $0\leq\delta(\f{D^*/A})<\mu$ where $\mu>0$. The new statement thus obtained is precisely Proposition 4.2 of \cite{Las1}. 

\begin{lemma}\label{lem:Omit1}
	Suppose that $\f{A\leq{B}}\in{\Kfin}$ and $\Phi\finsubset{\Kfin}$ are given such that $\f{B\subseteq{C}}$ with $\f{B\nleq{C}}$ for all $\f{C}\in\Phi$. Let $m\in{\omega}$. Then there is a $\f{D^*\supseteq{B}}$, $\f{D^*}\in\Kfin$ such that \begin{enumerate}
		\item $0\leq\delta(\f{D^*/A})<Gr(m)$
		\item $\f{A\leq{D^*}}$
		\item No $\f{C}\in\Phi$ isomorphically embeds into $\f{D^*}$ over $\f{B}$
	\end{enumerate}
	If $\z\alpha$ is rational then we can always find $\f{D^*}$ such that $\delta(\f{D^*/A})=0$.  
\end{lemma}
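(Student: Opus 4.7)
The plan is to combine Theorem \ref{thm:OmitPrelim2} (producing essential minimal pairs with arbitrarily small rank drop) with Lemma \ref{lem:NewStrsViaFreeJoins}(1) (free-joining copies of such a pair over $\f{B}$ to get a bounded rank drop while forbidding ``smaller'' minimal pairs). The strategy is to arrange for the chosen essential minimal pair $(\f{B},\f{C})$ to be \emph{larger} in cardinality than every minimal pair witnessing $\f{B}\nleq\f{C'}$ for $\f{C'}\in\Phi$; the free-join construction will then refuse to embed any of these witnesses, hence refuse every $\f{C'}\in\Phi$.

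First I would dispose of the degenerate case $\delta(\f{B})=0$: since $\f{A\leq B}$ forces $\delta(\f{A})=0$ and $0$ is the minimum of $\delta$ on $\Kfin$, every $\Kfin$-superstructure of $\f{B}$ has $\f{B}$ strong, so $\Phi=\emptyset$ and $\f{D^*}=\f{B}$ satisfies all three clauses. Henceforth assume $\delta(\f{B})>0$. For each $\f{C'}\in\Phi$, pick $\f{B\subseteq G_{C'}\subseteq C'}$ of minimum cardinality with $\delta(\f{G_{C'}})<\delta(\f{B})$; minimality of $|G_{C'}|$ makes $(\f{B},\f{G_{C'}})$ a minimal pair. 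Let $M=\max\{|G_{C'}|:\f{C'}\in\Phi\}$, a finite bound.

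Next, apply Theorem \ref{thm:OmitPrelim2} to $\f{B}$ to produce an essential minimal pair $(\f{B},\f{C})$ with $|C|>M$ and $\gamma:=-\delta(\f{C/B})$ small enough: in the irrational case take $\epsilon=Gr(m)$ in part (1), yielding $0<\gamma<Gr(m)$; in the rational case part (2) gives $\gamma=1/c$, and since the granularity sequence is monotone decreasing with eventual value $1/c$ we have $\gamma\leq Gr(m)$. The theorem provides infinitely many non-isomorphic such $\f{C}$, so we may insist on $|C|>M$. Feed $(\f{B},\f{C})$ into Lemma \ref{lem:NewStrsViaFreeJoins}(1) to obtain $\f{D^*}\in\Kfin$ with $\f{B\subseteq D^*}$, $\f{A\leq D^*}$, $0\leq\delta(\f{D^*/A})<\gamma\leq Gr(m)$, and with the property that no minimal pair $(\f{B},\f{G})$ of size $|G|<|C|$ embeds in $\f{D^*}$ over $\f{B}$. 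If some $\f{C'}\in\Phi$ embedded in $\f{D^*}$ over $\f{B}$, restricting would embed $(\f{B},\f{G_{C'}})$, contradicting $|G_{C'}|\leq M<|C|$. When $\z\alpha$ is rational, $\delta(\f{B/A})=\delta(\f{B})-\delta(\f{A})$ is an integer multiple of $1/c=\gamma$, so the construction in Lemma \ref{lem:NewStrsViaFreeJoins}(1), which yields $\delta(\f{D^*/A})=\delta(\f{B/A})-k\gamma$ for the unique non-negative integer $k$ with $k\gamma\leq\delta(\f{B/A})<(k+1)\gamma$, gives exactly $\delta(\f{D^*/A})=0$.

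The only real tension is between making $\gamma$ small (for clause (1)) and $|C|$ large (for clause (3)), but Theorem \ref{thm:OmitPrelim2} dissolves this by furnishing infinitely many non-isomorphic $\f{C}$ at each sufficiently small rank drop; once such a $\f{C}$ is in hand, the verification of (1), (2), and (3) is bookkeeping inside Lemma \ref{lem:NewStrsViaFreeJoins}(1).
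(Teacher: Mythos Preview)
Your proof is correct and follows essentially the same route as the paper's: reduce each $\f{C'}\in\Phi$ to a minimal-pair witness, invoke Theorem~\ref{thm:OmitPrelim2} to obtain an essential minimal pair $(\f{B},\f{C})$ with small rank drop and $|C|$ exceeding all the witnesses, then apply Lemma~\ref{lem:NewStrsViaFreeJoins}(1). Your treatment of the rational clause (observing that $\delta(\f{B/A})$ is an integer multiple of $\gamma=1/c$, so the free-join construction lands exactly on $\delta(\f{D^*/A})=0$) is in fact more explicit than the paper's, which leaves that verification to the reader.
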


\begin{proof} 
	Fix $\f{A,B}$ and $\Phi$ as above. Note that we may replace each $\f{C}\in\Phi$ by $\f{B\subseteq{C'}\subseteq{C}}$ that is minimal and thus we may as well assume that $\f{(B,C)}$ is a minimal pair for any given $\f{C}\in\Phi$. Now if $\delta(\f{A})=\delta({\f{B}})$, then take $\f{D^*=B}$.  So we may assume that $\delta(\f{A})<\delta(\f{B})$. Let $u$ be a positive integer such that $u>|\f{C}|$ for each $\f{C}\in{\Phi}$. Now using Theorem \ref{thm:OmitPrelim2}, fix a $\f{D}\in\Kfin$ such that $|D-B|>u$ and $(\f{B,D})$ is an essential minimal pair that satisfies $-\min\{Gr(m),\delta(\f{B/A})\}\leq \delta(\f{D/B})<0$. Using $(1)$ of Lemma \ref{lem:NewStrsViaFreeJoins}, we may obtain $\f{D}^*$ with the required properties. 
\end{proof}

\begin{defn}
	Let $\f{B}\in{\Kfin}$ and let $\Phi\finsubset{\Kfin}$ such that each $\f{C}\in{\Phi}$ extends $\f{B}$. For any $\f{M}\models{S_{\alpha}}$, an embedding $g:\f{B}\rightarrow\f{M}$ \textit{omits} $\Phi$ if there is no embedding $h:\f{C}\rightarrow\f{M}$ extending $g$ for any $\f{C}\in{\Phi}$.
\end{defn}

The following is a Proposition 4.4 of \cite{Las1}. It's proof follows along the same lines there in with obvious modifications made to allow for the existence of structures $\f{D}\in\Kfin$ such that $\delta(\f{D})=0$ in the case that $\z\alpha$ is rational.  

\begin{thm}\label{thm:MainEmThm}
	Suppose that $\f{A\leq{B}}$ are from $\Kfin$ and $\Phi$ is a finite subset of of $\Kfin$ such that for each $\f{C}\in{\Phi}$, $\f{A\leq{C}}$, $\f{B\subseteq{C}}$ but $\f{B\nleq{\f{C}}}$. Then for any $\f{M}\models{\Sfin}$, for any embedding $f:\f{A\rightarrow{M}}$ there are infinitely many embeddings $g_i:\f{B\rightarrow{M}}$ extending $f$ such that each $g_i$ omits $\Phi$ and $\{g_{i}(\f{B}):i\in{\omega}\}$ is disjoint over $f(\f{A})$. 
\end{thm}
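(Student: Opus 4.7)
The plan is to use Lemma~\ref{lem:Omit1} to engineer a single finite structure $\f{D}^*$ into which $\f{B}$ but no $\f{C}\in\Phi$ embeds over $\f{A}$, then to harvest disjoint $\f{B}$-embeddings by pushing many disjoint copies of $\f{D}^*$ through the axioms of $\Sfin$. As a first reduction, I would replace each $\f{C}\in\Phi$ by a minimal intermediate $\f{C}'$ with $\f{B}\subseteq \f{C}'\subseteq \f{C}$ so that $(\f{B},\f{C}')$ is a minimal pair; the condition $\f{A}\leq \f{C}'$ is inherited from $\f{A}\leq \f{C}$, and any embedding of $\f{C}$ extending a $\f{B}$-embedding restricts to one of $\f{C}'$, so omitting the new $\Phi$ suffices. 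Choosing $m\in\omega$ large enough that $Gr(m)<|\delta(\f{C}/\f{B})|$ for every $\f{C}\in\Phi$ (possible by Lemma~\ref{lem:granularity} when $\z{\alpha}$ is irrational; the rational case is subsumed since Lemma~\ref{lem:Omit1} then yields $\delta(\f{D}^*/\f{A})=0$), Lemma~\ref{lem:Omit1} produces $\f{D}^*\in \Kfin$ with $\f{B}\subseteq \f{D}^*$, $\f{A}\leq \f{D}^*$, $0\leq \delta(\f{D}^*/\f{A})<Gr(m)$, and no $\f{C}\in\Phi$ embedding into $\f{D}^*$ over $\f{B}$.

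Next, I would form the countable free join $\f{F}_\omega=\bigoplus_{i<\omega}\f{D}^*_i/\f{A}$; each finite sub-join $\f{F}_n$ satisfies $\f{A}\leq \f{F}_n\in\Kfin$ by iterating Fact~\ref{lem:BasicDel3}(4) and Fact~\ref{lem:FullAmalg1}. Using the $\Sfin$-axiom of Definition~\ref{defn:AEAxioms} stage by stage, I would build an embedding $h:\f{F}_\omega\to\f{M}$ extending $f$. Restricting $h$ to the $\omega$ copies of $\f{B}$ inside the $\f{D}^*_i$ yields embeddings $g_i:\f{B}\to\f{M}$ extending $f$, pairwise disjoint over $f(\f{A})$ by the free-join structure.

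To see that each $g_i$ omits $\Phi$, suppose for contradiction that some $g_i$ extends to $h':\f{C}\to\f{M}$ for $\f{C}\in\Phi$, and consider $\f{E}\subseteq \f{M}$ with universe $h(\f{D}^*_i)\cup h'(\f{C})$, which lies in $\Kfin$. The set $V=\{c\in C-B:h'(c)\in h(\f{D}^*_i)\}$ pulls back via $h$ to an embedding of $\f{B}\cup V\subseteq \f{C}$ into $\f{D}^*_i$ over $\f{B}$. If $V=C-B$ this contradicts the defining property of $\f{D}^*$; otherwise minimality of $(\f{B},\f{C})$ forces $\f{B}\leq \f{B}\cup V$, whence $\delta(\f{E}/h(\f{D}^*_i))\leq \delta(\f{C}/\f{B}\cup V)\leq \delta(\f{C}/\f{B})<0$ by Fact~\ref{lem:BasicDel3}. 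Combining this with $\delta(\f{D}^*/\f{A})<Gr(m)$ gives $\delta(\f{E}/f(\f{A}))<Gr(m)+\delta(\f{C}/\f{B})<0$, after which I would leverage Remark~\ref{rmk:GranulairtyAndDropInRank} on granularity of negative rank drops to extract the required contradiction with $\f{E}\in\Kfin$.

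The main obstacle will be this final step: converting the bound $\delta(\f{E}/f(\f{A}))<0$ into an outright contradiction with $\f{E}\in\Kfin$ despite $f(\f{A})$ not being a priori strong in $\f{M}$. The careful selection of $m$ via Lemma~\ref{lem:Omit1}, combined with the subtle overlap analysis via minimality of $(\f{B},\f{C})$, is precisely what the granularity argument needs in order to bridge this gap.
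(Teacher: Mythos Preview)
Your overall strategy matches the paper's (which simply defers to Laskowski's Proposition~4.4): manufacture $\f{D}^*$ via Lemma~\ref{lem:Omit1}, push copies into $\f{M}$ through the $\Sfin$ extension axioms, and rule out each $\f{C}\in\Phi$ by a rank computation on the resulting join. The reduction to minimal pairs, the overlap analysis giving $\delta(\f{E}/h(\f{D}_i^*))\le\delta(\f{C}/\f{B}\cup V)\le\delta(\f{C}/\f{B})<0$, and the disjointness of the $g_i$ via the free join are all fine.

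The gap is exactly where you flag it, and your final paragraph does not close it. From $\delta(\f{E}/f(\f{A}))<0$ you cannot extract a contradiction with $\f{E}\in\Kfin$: the latter only says $\delta(\f{E})\ge 0$, while the former says $\delta(\f{E})<\delta(\f{A})$, and these are compatible whenever $\delta(\f{A})>0$. Remark~\ref{rmk:GranulairtyAndDropInRank} quantizes a negative relative rank once the extension has small size, but it does not manufacture a base over which $\f{E}$ sits strongly. You \emph{do} have $f(\f{A})\le h(\f{D}^*_i)$ (from $\f{A}\le\f{D}^*$) and $f(\f{A})\le h'(\f{C})$ (from the hypothesis $\f{A}\le\f{C}$), but these do not combine to give $f(\f{A})\le\f{E}$: strongness is not preserved under unions, as easy graph examples show.

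What your computation actually proves is the conditional statement: for any $\f{F}\in\Kfin$ with $\f{D}^*\subseteq\f{F}$ \emph{and} $\f{A}\le\f{F}$, no $\f{C}\in\Phi$ embeds into $\f{F}$ over $\f{B}$. Laskowski's argument uses this, but the passage to the unconditional conclusion requires a further layer you have not supplied. One must arrange---by enlarging $\f{D}^*$ through an additional application of Lemma~\ref{lem:Omit1}, now with the finitely many possible ``bad joins'' $\f{D}^*\cup h'(\f{C})$ (each of which satisfies $\f{D}^*\not\le$ it) playing the role of $\Phi$---that the image in $\f{M}$ is controlled enough for the contradiction to land inside a set where $\f{A}$ \emph{is} strong by construction. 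Simply invoking granularity together with minimality of $(\f{B},\f{C})$, as you propose, does not bridge this.
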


\begin{cor}\label{lem:aleph0sat}
	Suppose that $\f{A,B}\in\Kfin$ and $\f{A\leq{B}}$ and $f:\f{A\rightarrow{M^*}}$ is strong where $\f{M}^*\models{\Sfin}$ is $\aleph_{0}$-saturated. Then there is a strong embedding $g:\f{B\rightarrow{M^*}}$ extending $f$. In particular, every $\f{B}\in{K_{\alpha}}$ embeds strongly into $\f{M}^*$.
\end{cor}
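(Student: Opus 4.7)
The plan is to realize a suitable partial type via $\aleph_0$-saturation. Let $\z{a}$ enumerate $\f{A}$ and let $\z{y}$ be variables enumerating $B-A$. Consider the partial type $p(\z{y})$ over $f(\f{A})$ consisting of (i) the formulas in $\Delta_{\f{A,B}}(f(\z{a}), \z{y})$, asserting that $\z{y}$ together with $f(\f{A})$ forms an isomorphic copy of $\f{B}$ extending $f$; and (ii) for every triple $(\f{A}, \f{B}, \f{C}) \in \Kfin^3$ (taken up to isomorphism fixing $\f{B}$ pointwise) with $\f{A} \leq \f{C}$, $\f{B} \subseteq \f{C}$, and $\f{B} \nleq \f{C}$, the sentence $\neg \exists \z{z}\, \Delta_{\f{A,B,C}}(f(\z{a}), \z{y}, \z{z})$.

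Finite satisfiability of $p$ is exactly what Theorem \ref{thm:MainEmThm} provides. Given any finite subset of (ii), corresponding to some $\Phi = \{\f{C}_1, \ldots, \f{C}_k\}$, the hypotheses of Theorem \ref{thm:MainEmThm} are satisfied, so there exists an embedding $g_0: \f{B} \to \f{M}^*$ extending $f$ that omits $\Phi$; the tuple $g_0(\z{y})$ then satisfies the chosen finite sub-type, including clause (i). By $\aleph_0$-saturation of $\f{M}^*$, the type $p$ is realized by some tuple $\z{b}^*$. Setting $g(\z{y}) := \z{b}^*$ yields an embedding $g: \f{B} \to \f{M}^*$ extending $f$ by clause (i).

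To see that $g$ is strong, suppose toward a contradiction that $g(\f{B}) \nleq \f{M}^*$. Then there exists $\f{C}^* \finsubset \f{M}^*$ with $g(\f{B}) \subseteq \f{C}^*$ and $g(\f{B}) \nleq \f{C}^*$. Since $f(\f{A}) \subseteq g(\f{B}) \subseteq \f{C}^*$ and $f(\f{A}) \leq \f{M}^*$ by hypothesis, we get $f(\f{A}) \leq \f{C}^*$. Consequently, the isomorphism type of $\f{C}^*$ over $f(\f{A}) \cup g(\f{B})$ is one of the triples enumerated in clause (ii), directly contradicting that $\z{b}^*$ realizes $p$. Hence $g(\f{B}) \leq \f{M}^*$. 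The ``in particular'' assertion follows by applying the result with $\f{A} = \emptyset$, which is strong in every $\f{M}^* \in \z{\Kfin}$ by Remark \ref{fact:ExtSmthFraClass}.

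The main obstacle is framing clause (ii) correctly. One is tempted to enumerate only minimal pairs $(\f{B}, \f{C})$, but then the hypothesis $\f{A} \leq \f{C}$ of Theorem \ref{thm:MainEmThm} may fail and finite satisfiability becomes problematic. Enlarging the enumeration to all $\f{C} \supseteq \f{B}$ with $\f{A} \leq \f{C}$ and $\f{B} \nleq \f{C}$ resolves this while still being enough for the contradiction step, because any witness $\f{C}^*$ to the failure of $g(\f{B}) \leq \f{M}^*$ automatically inherits $f(\f{A}) \leq \f{C}^*$ from the strongness of $f(\f{A})$ in $\f{M}^*$.
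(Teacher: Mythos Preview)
Your proof is correct and is precisely the argument the paper has in mind: the corollary is stated immediately after Theorem~\ref{thm:MainEmThm} with no proof, and the intended derivation is exactly the one you give --- use Theorem~\ref{thm:MainEmThm} to get finite satisfiability of the omitting type over $f(\f{A})$, then realize it by $\aleph_0$-saturation. Your closing paragraph about why clause~(ii) must include the condition $\f{A}\leq\f{C}$ (rather than just enumerating minimal pairs over $\f{B}$) is a genuinely useful observation that the paper does not spell out.
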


\subsection{Putting it all together}\label{subsec:QuantElim}

In this section we give a brief description of how to genaralize the results of \cite{Las1} mentioned at the beginning of this section. \\

Suppose that $\f{A\subseteq{B}}$ are from $\Kfin$. Let $\f{C}$ be the union of a maximal minimal chain of minimal pairs over $\f{A}$ in $\f{B}$. Then clearly $\f{C\leq{B}}$.  Since the sentence $\forall{\z{x}}[\Delta_\f{C}(\z{x})\rightarrow\exists{\z{y}}\Delta_{\f{(C,B)}}{(\z{x},\z{y})}]$ is an axiom $\Sfin$, the extension formula $\Psi_{\f{A,B}}$ is $\Sfin$ equivalent to the chain-minimal extension formula $\Psi_{\f{A,C}}$, i.e. every extension formula is $\Sfin$ equivalent to a chain minimal extension formula. 

\begin{thm}\label{thm:QuantElim}
	Every $L$-formula is $\Sfin$-equivalent to a boolean combination of chain-minimal extension formulas.
\end{thm}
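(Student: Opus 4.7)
The plan is to prove the theorem by the standard back-and-forth criterion for quantifier elimination. Fix $\aleph_{0}$-saturated $\f{M}^{*}, \f{N}^{*}\models\Sfin$ and tuples $\z{a}\in M^{*}$, $\z{b}\in N^{*}$ satisfying the same chain-minimal extension formulas (equivalently, by the reduction noted just before the theorem, the same extension formulas). I would show $\tp^{\f{M}^{*}}(\z{a})=\tp^{\f{N}^{*}}(\z{b})$ by constructing an elementary map extending $\z{a}\mapsto\z{b}$ via back-and-forth.

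As a preliminary alignment stage, I would construct a partial isomorphism $\sigma:\icl_{\f{M}^{*}}(\z{a})\to\icl_{\f{N}^{*}}(\z{b})$ extending $\z{a}\mapsto\z{b}$. Each intrinsic closure is the union of an $\omega$-chain of finite substructures obtained by successively attaching minimal pairs, and each finite layer is precisely recorded by a chain-minimal extension formula over the previous layer. Matching these formulas on both sides, together with $\aleph_{0}$-saturation and the omitting provided by Lemma \ref{lem:Omit1} (to ensure that no \emph{extra} minimal pairs appear on either side at each layer), yields $\sigma$ by the standard layer-by-layer construction.

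For the main back-and-forth step, fix $c\in M^{*}$. Choose a finite $\f{A}_{0}$ with $\z{a}\subseteq A_{0}\leq\f{M}^{*}$ and then, via a finite minimal chain inside $\icl_{\f{M}^{*}}(\z{a}c)$, a finite $\f{B}$ with $\f{A}_{0}\leq\f{B}\leq\f{M}^{*}$ and $c\in B$; by $\aleph_{0}$-saturation of $\f{N}^{*}$ it suffices to realize finitely many extension formulas positively and omit finitely many negatively. The positive datum is realized because the extension formula $\Psi_{\sigma(\f{A}_{0}),\f{B}}$ holds at $\sigma(\f{A}_{0})$ in $\f{N}^{*}$ (by the alignment and the axioms of $\Sfin$); to omit the finite set $\Phi\finsubset\Kfin$ of ``bad'' minimal pairs over $\f{B}$ corresponding to the extension formulas not satisfied by $\z{a}c$, I would invoke Theorem \ref{thm:MainEmThm}. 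Since $\sigma(\f{A}_{0})\leq\f{N}^{*}$ and $\f{A}_{0}\leq\f{B}$, the theorem produces a strong embedding of $\f{B}$ into $\f{N}^{*}$ extending $\sigma\vert_{\f{A}_{0}}$ and omitting $\Phi$; the image of $c$ is the required $d$.

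The main obstacle I expect is the bookkeeping in the alignment stage: one must verify that at each layer the positive matching of minimal pairs can be performed so that simultaneously no unwanted minimal pair sneaks in on either side. This is handled by invoking Lemma \ref{lem:Omit1} at each layer, noting that its ``omits $\Phi$'' clause gives precisely the positive-and-negative control needed, and crucially does not depend on the linear-independence assumption on $\z{\alpha}$ that appears in \cite{Las1}. With this bookkeeping in place, the rest of the argument proceeds essentially as in Laskowski's proof for the linearly-independent case.
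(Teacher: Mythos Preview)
Your overall strategy---back-and-forth between $\aleph_0$-saturated models, using Theorem~\ref{thm:MainEmThm} to realize positive and omit negative chain-minimal extension formulas---is exactly the approach in \cite{Las1}, which is what the paper invokes. But your main back-and-forth step begins with ``Choose a finite $\f{A}_0$ with $\z{a}\subseteq A_0\leq\f{M}^*$,'' and this is where the argument breaks. In an $\aleph_0$-saturated $\f{M}^*\models\Sfin$, no such finite $A_0$ exists once $\delta(\z{a})>0$: by Theorem~\ref{thm:OmitPrelim2} there are infinitely many non-isomorphic essential minimal pairs over any finite set of positive rank, and saturation forces copies of all of them to appear over $\z{a}$, so $\icl_{\f{M}^*}(\z{a})$ is infinite and no finite superset of $\z{a}$ is strong. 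The same objection applies to your choice of $\f{B}$ with $c\in B\leq\f{M}^*$. (Relatedly, Theorem~\ref{thm:MainEmThm} neither requires the embedding of $\f{A}$ to be strong nor does it produce a strong embedding of $\f{B}$; it only yields an embedding omitting $\Phi$.)

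The argument in \cite{Las1} never selects a finite strong set containing $\z{a}$. The back-and-forth step handles $c$ directly: given finitely many positive and negative chain-minimal extension formulas satisfied by $\z{a}c$, one packages the positive ones into a single finite extension of $\z{a}$ (which is still recorded by a chain-minimal extension formula over $\z{a}$, hence transferable to $\z{b}$ by hypothesis) and then applies Theorem~\ref{thm:MainEmThm} with $\f{A}$ taken to be that finite piece---not a strong substructure of the model---to produce the required $d$ while omitting the negatives. Your separate ``alignment stage'' is therefore unnecessary, and as stated it also misdescribes the intrinsic closure: while countable (it equals $\acl$ by Theorem~\ref{lem:ClosEqui}), it is not exhausted by a single $\omega$-chain of minimal pairs attached one at a time, and in any case cannot serve as the finite strong $A_0$ your main step demands.
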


\begin{proof}
	The proof is identical to the proof of Theorem 5.6 of \cite{Las1}. The proof in \cite{Las1} depends on results in Section 3 and Proposition 4.4 of \cite{Las1}. As we have noted previously, Theorem \ref{thm:MainEmThm} generalizes Proposition 4.4 of \cite{Las1}. The results in Section 3 of \cite{Las1}  are easily seen to hold in this context. 
\end{proof}

Of the following results, $(1)$ and $(2)$ of Theorem \ref{Cor:GenModelsSfin} was first proved in full generality in \cite{IkKiTs} by Ikeda, Kikyo and Tsuboi. However their proof does not yield the quantifier elimination result of Theorem \ref{thm:QuantElim}. See Corollary 5.7 and Proposition 6.1 of \cite{Las1} for an alternate proof of Theorem \ref{Cor:GenModelsSfin} using the techniques found in this paper. 

\begin{theorem}\label{Cor:SfinComplete}\label{Cor:GenModelsSfin}\label{lem:ClosEqui}
	\begin{enumerate}
		%\item The theory $\Sfin$ is complete.
		\item $\Sfin$ is the theory of the $(\Kfin,\leq)$-generic $\f{M}_{\z\alpha}$. 
		\item Fix $\f{M}\models{\Sfin}$ and $\f{X\subseteq{M}}$. The following are equivalent:\begin{enumerate}
			\item $X$ is algebraically closed
			\item For any minimal pair $\f{(B,C)}$ with $\f{C \subseteq M}$, if $B\subseteq{X}$, then $C\subseteq{X}$.
			\item For any finite ${B\subseteq{M}}$, ${B}\cap{X}\leq{{B}}$ 
		\end{enumerate}
	\end{enumerate}
\end{theorem}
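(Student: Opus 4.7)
For part (1), $\f{M}_{\z\alpha}\models\Sfin$ follows directly from Definition \ref{defn:GenericStr}: clause (1) of Definition \ref{defn:AEAxioms} holds because $\f{M}_{\z\alpha}\in\z{\Kfin}$, and clause (2) is the generic extension property. For completeness, I would invoke Theorem \ref{thm:QuantElim}: every sentence is $\Sfin$-equivalent to a boolean combination of chain-minimal extension sentences over $\emptyset$, each of the form $\exists\z{y}\,\Delta_{\f{B}}(\z{y})$ for some $\f{B}\in\Kfin$. Since $\emptyset\leq\f{B}$ is automatic, each such sentence is a direct consequence of clause (2) of Definition \ref{defn:AEAxioms} applied to the unique embedding of $\emptyset$, so every model of $\Sfin$ satisfies the same sentences. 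Hence $\Sfin$ is complete and equals $\Th(\f{M}_{\z\alpha})$.

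For part (2), I would first dispatch the equivalence (b) $\Leftrightarrow$ (c) formally. Observe that (b) literally says ``$X$ is closed in $\f{M}$'' (Definition \ref{defn:ClosedSet}) while, by Remark \ref{rmk:Closed=StrongForFinite}, (c) says ``$B\cap X$ is closed in $B$ for every finite $B\subseteq M$''. For (c) $\Rightarrow$ (b): a minimal pair $(\f{B,C})$ with $B\subseteq X$ and $C\not\subseteq X$ forces $B\subseteq C\cap X\subsetneq C$; chaining $B\leq C\cap X$ (from minimality of $(\f{B,C})$) with $C\cap X\leq C$ (from (c) applied to $B=C$) yields $B\leq C$ by transitivity, contradicting the minimal pair. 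For (b) $\Rightarrow$ (c): any failure of $B\cap X\leq B$ for finite $B$ produces, via Remark \ref{rmk:Closed=StrongForFinite}, a minimal pair $(\f{A,A'})$ inside $B$ with $A\subseteq X$ and $A'\not\subseteq X$, directly contradicting (b).

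For (a) $\Rightarrow$ (b), I would argue contrapositively: given a minimal pair $(\f{B,C})$ with $B\subseteq X$ and some $a\in C\setminus X$, I would show $a\in\acl(B)\subseteq\acl(X)$ by the standard rank argument. If $a$ had infinitely many $B$-conjugates in $\f{M}$, a $\Delta$-system extraction on the finitely many points of $C-B$ yields, for arbitrary $n$, pairwise $B$-disjoint copies $\f{C}_1,\ldots,\f{C}_n$ of $\f{C}$ in $\f{M}$ containing distinct conjugates; the substructure they generate has $\delta$ bounded above by the free-join rank $\delta(B)+n\delta(C/B)$, which becomes negative for large $n$ since $\delta(C/B)<0$, contradicting $\f{M}\in\z{\Kfin}$. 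Hence $a\in\acl(X)\setminus X$, violating (a).

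The main obstacle is (b) $\Rightarrow$ (a). I would pass to an $\aleph_0$-saturated $\f{M}^*\succeq\f{M}$, noting that the finitary condition $A\leq A\cup\{a\}$ (for finite $A\subseteq X$ and $a\in M^*\setminus X$) transfers from corresponding witnesses in $\f{M}$ guaranteed by (c). Given $a\in M^*\setminus X$, my goal is to realize $\tp(a/X)$ infinitely often in $\f{M}^*$. By Theorem \ref{thm:QuantElim}, this reduces to finding, for each chain-minimal extension formula $\Psi_{A\cup\{a\},D}$ over a finite $A\subseteq X$ that $a$ satisfies, infinitely many realizations simultaneously compatible with the full type over $X$; Theorem \ref{thm:MainEmThm} applied at each strong $A$ produces infinitely many pairwise-$A$-disjoint copies of $A\cup\{a\}$ omitting ``bad'' chain extensions, and a back-and-forth along an exhaustion of $X$ by finite sets, using QE at each stage to preserve quantifier-free types, assembles these into full $\tp(a/X)$-conjugates. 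The delicate point is maintaining consistency of the partial isomorphisms at limit stages, which is where the closedness of $X$ in $\f{M}^*$ (i.e., (b) transferred) is essential.
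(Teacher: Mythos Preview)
The paper does not give its own proof of this theorem; it simply cites \cite{IkKiTs} and refers to Corollary~5.7 and Proposition~6.1 of \cite{Las1} for an argument ``using the techniques found in this paper''. So there is no in-paper proof to compare against line by line, only the Laskowski template that the surrounding machinery (Theorem~\ref{thm:QuantElim}, Theorem~\ref{thm:MainEmThm}) is meant to feed into. Your treatment of part~(1), of the equivalence (b)$\Leftrightarrow$(c), and of (a)$\Rightarrow$(b) is correct and essentially the standard route; these match what one would extract from \cite{Las1}.

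The soft spot is your sketch of (b)$\Rightarrow$(a). Two issues. First, the passage to $\f{M}^*$ and the transfer claim are misframed: for $a\in M^*\setminus M$ the assertion ``$A\leq A\cup\{a\}$ for finite $A\subseteq X$'' can simply fail (take $a'\in X$ with $(A,A\cup\{a'\})$ a minimal pair and realize its quantifier-free type by a new point of $M^*$). Fortunately you do not need this: to show $X$ is algebraically closed in $\f{M}$ you only need $a\notin\acl(X)$ for $a\in M\setminus X$, and for such $a$ condition~(c) gives $A\leq A\cup\{a\}$ directly, with no transfer. Second, the ``back-and-forth along an exhaustion of $X$'' aimed at realizing $\tp(a/X)$ infinitely often is both vague and stronger than required. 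Since $\acl(X)=\bigcup_{A\finsubset X}\acl(A)$, it suffices to show that for each finite $A\subseteq X$ every formula $\varphi(x,\bar a)\in\tp(a/A)$ has infinitely many solutions. After QE one reduces to a single conjunctive clause; one then absorbs the positive extension-formula witnesses into a finite $B\supseteq A\cup\{a\}$, sets $A_0=B\cap X\leq B$ via (c), and invokes Theorem~\ref{thm:MainEmThm} with a carefully chosen finite $\Phi$ that rules out the finitely many amalgams of $B$ with the negated chain-minimal extensions. Checking that each such amalgam really satisfies $A_0\leq D$ and $B\not\leq D$ (so that it is admissible in $\Phi$) is exactly the content of Laskowski's Proposition~6.1 and is where the work lies; your sketch does not address it.
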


The following lemma, will be useful in both Section \ref{sec:AtomECMod}. It is an immediate consequence of the quantifier elimination:

\begin{lemma}\label{lem:CompleteTypesOverSets}
	Let $\f{M}\models\Sfin$ and ${A}$ be a finite closed set of $\f{M}$. Suppose that $\pi$ is a consistent partial type over $A$. Then \begin{enumerate}
		\item If $\f{M}$ is $\aleph_0$-saturated and any realization $\z{b}$ of $\pi$ in $\f{M}$ has the property that  $\z{b}{A}$ is closed in $\f{M}$, then $\pi$ has a unique completion to a complete type $p$ over $A$.
		\item If any realization $\z{b}$ of the quantifier free type of $\pi$ (over ${A}$) has the property $\delta(\z{b}/{A})=0$, then $\pi$ has a unique completion $p$ over $A$ and further $p$ is isolated by the formula $\Delta_{{A},A\z{b}}(\z{a},\z{x})$. 
	\end{enumerate}  
\end{lemma}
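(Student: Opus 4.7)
The plan for both parts is to combine the quantifier elimination result (Theorem~\ref{thm:QuantElim}) with the following core observation: if $\z{b}\in\f{M}$ is such that $A\z{b}$ is closed in $\f{M}$, then $\tp(\z{b}/A)$ is determined entirely by the atomic diagram $\Delta_{A,A\z{b}}(\z{a},\z{x})$. Indeed, by Theorem~\ref{thm:QuantElim} the type $\tp(\z{b}/A)$ is fixed by which chain-minimal extension formulas $\Psi_{\f{C},\f{D}}(\z{a},\z{x})$ it contains. The tuple $\z{b}$ satisfies such a formula iff $A\z{b}\cong\f{C}$ over $A$ and some copy of $\f{D}$ extending this $\f{C}$ sits inside $\f{M}$. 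If $\f{D}\supsetneq\f{C}$, then writing $\f{C}=\f{D}_0\subsetneq\f{D}_1\subsetneq\cdots\subsetneq\f{D}_n=\f{D}$ as a chain of minimal pairs, the image of $\f{D}_1$ in $\f{M}$ would form a proper minimal pair over $A\z{b}$, contradicting the closedness of $A\z{b}$. Hence the only chain-minimal extension formulas satisfied by $\z{b}$ are those with $\f{D}=\f{C}\cong A\z{b}$ over $A$, pinning down $\tp(\z{b}/A)$.

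For part $(1)$, $\aleph_0$-saturation ensures every completion of $\pi$ is realized in $\f{M}$ by some $\z{b}$, and the hypothesis gives that $A\z{b}$ is closed. The core observation then makes $\tp(\z{b}/A)$ depend only on the isomorphism type of $A\z{b}$ over $A$, and together with the fact that $\pi$ controls the atomic data of its realizations one concludes that $\pi$ has a unique completion. For part $(2)$, I would first establish that $A\z{b}$ is closed in $\f{M}$ without invoking saturation: since $A$ is finite and closed in $\f{M}$, we have $A\leq\f{M}$ by Remark~\ref{rmk:Closed=StrongForFinite}. Then $\delta(A\z{b})=\delta(A)+\delta(\z{b}/A)=\delta(A)\leq\delta(B)$ for every finite $B$ with $A\z{b}\subseteq B\finsubset\f{M}$, so $A\z{b}\leq\f{M}$, i.e., $A\z{b}$ is closed. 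The core observation now yields that $\Delta_{A,A\z{b}}(\z{a},\z{x})$ determines $\tp(\z{b}/A)$ within $\f{M}$. To upgrade this to genuine isolation in the Stone space $S(A)$, one passes to an $\aleph_0$-saturated elementary extension $\f{M}^*$ of $\f{M}$: closedness of $A$ is a schema of statements $\neg\exists\z{y}\,\Delta_{A,\f{B}}(\z{a},\z{y})$ ranging over proper minimal pairs $(A,\f{B})$ and so persists in $\f{M}^*$, allowing the core observation to run there and compare any two realizations of $\Delta_{A,A\z{b}}$.

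The main obstacle is the core observation itself, specifically checking that when a chain-minimal $\f{D}\supsetneq\f{C}$ embeds over $\f{C}\cong A\z{b}$ into $\f{M}$, the first step $(\f{C},\f{D}_1)$ of the witnessing chain really does produce an honest minimal pair over $A\z{b}$ inside $\f{M}$ that violates closedness. This is a careful unpacking of the definitions of minimal pair and chain-minimal extension, but it is where the whole argument lives; once it is in hand, both clauses follow quickly from quantifier elimination and an elementary rank calculation.
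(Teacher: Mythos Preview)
Your approach mirrors the paper's: both rest on quantifier elimination (Theorem~\ref{thm:QuantElim}) plus the observation that when $A\z{b}$ is closed, no nontrivial chain-minimal extension formula can hold of $\z{b}$, so $\tp(\z{b}/A)$ is fixed by $\Delta_{A,A\z{b}}$. For part~(2), your rank computation $\delta(A\z{b})=\delta(A)\leq\delta(B)$ is the paper's contradiction $\delta(\z{b}AC/A)<0$ read in the other direction; your detour through a saturated elementary extension is unnecessary (the paper's argument already works in any model where $A$ is closed, which you yourself note is first-order) but harmless.

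For part~(1), the phrase ``$\pi$ controls the atomic data of its realizations'' is left unjustified, and this is a genuine gap---one the paper's own proof shares. The paper fixes a single realization $\z{b}$ and treats only extension formulas $\phi_D$ with $D\supseteq A\z{b}$, never explaining why $\pi$ should decide formulas whose quantifier-free part differs from $\Delta_{A,A\z{b}}$. Neither argument rules out two realizations of $\pi$ with distinct quantifier-free types both being closed, and indeed this can happen: let $\pi$ consist of the negations of all chain-minimal extension formulas with nontrivial chain; then every realization of $\pi$ in a saturated model is closed by definition, yet by Corollary~\ref{lem:aleph0sat} there are strongly embedded (hence closed) tuples of every quantifier-free type, giving multiple completions. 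So part~(1) as literally stated needs $\pi$ to pin down the quantifier-free type. Since the paper's only downstream use (Lemma~\ref{lem:PrinFla}) goes through part~(2), where this is built into the hypothesis, nothing later is affected; your proposal tracks the paper's argument faithfully, gap included.
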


\begin{proof}
	$(1)$: Note that by Theorem \ref{thm:QuantElim} it suffices to show that all chain minimal formulas over ${A}$ are determined by the given conditions. Let $\z{b}\models \pi$. Fix $\z{b}{A}\subseteq{D}\in\Kfin$ and let $\phi_{{D}}(\z{x}) = \Delta_{\z{a},\z{a}\z{b}}(\z{a},\z{x})\wedge \exists{\z{y}\Delta_{\z{a},\z{a}\z{b},{D-\z{a}\z{b}}}(\z{a},\z{x},\z{y})}$ be the corresponding extension formula. Suppose that $\z{b}A\leq D$. Now as $\z{b}A\leq M$ and $\f{M}\models\Sfin$, we obtain that $\f{M}\models \phi_{D}(\z{b})$. Thus it follows that $p\proves \phi_{D}$. Now suppose that $\z{b}{A}\nleq{{D}}$. If $\pi^* = \pi\cup{\neg\phi_{D}(\z{x})}$ is consistent, then there is some realization of $\pi^*$ \textit{in} $\f{M}$ by $\aleph_0$-saturation. Clearly no realization of $\pi^*$ can be strong in $\f{M}$, and hence $\pi\proves \neg\phi_{D}(\z{x})$. Thus $\pi$ determines all \textit{extension formulas} including the chain minimal formulas over $A$ and thus is complete. So simply take $p=\pi$ to obtain the required complete type.\\
	
	\noindent $(2)$: Consider a partial type given as above. We may as well assume that $\Delta_{A,A\z{b}}(\z{a},\z{x})\in \pi$. Arguing as in part $(1)$, we see that if $\z{b}A\leq D$, then $\phi_{D}(\z{x})\in \pi$. So assume that $\z{b}A\nleq D$ and that $\neg\phi_{D}(\z{x})$ is consistent with $\pi$. As $\f{M}$ is a model, there is some $\z{b'}$ realizing $\phi_{D}(\z{x})$. But then, there is some $C\subseteq M$ such that $(\z{b}A,\z{b}AC)$ is a minimal pair. Now $\delta(\z{b}AC/A)= \delta(\z{b}AC/\z{b}A)+\delta(\z{b}A/A)<0$. But this contradicts $A\leq M$. Thus the required result follows.
\end{proof}

We take this opportunity to give a brief sketch of the fact that $\Sfin$ has \textit{Dimensional Order Property} (DOP, see \cite{Bd} for a definition). This result will is independent from the rest of the results in the note.   

\begin{remark}\label{rmk:DOP}
	It is well known that these theories are stable (see \cite{BdSh}, \cite{VY}). In \cite{BdShelah}, Baldwin and Shelah gave a proof that $\Sfin$ has DOP \textit{assuming} that $L$  has a binary relation. In Corollary $7.10$ of \cite{Las1}, Laskowski gave a proof of DOP by explicitly constructing a type that witnesses the DOP. He did not assume that $L$ contained a binary symbol, however he did assume  $\z\alpha$ satisfied certain properties. His proof contains two key steps: Proposition 7.8 and Corollary 7.10 of \cite{Las1}. We observe that we can prove a slightly modified form of Proposition 7.8 of \cite{Las1} by replacing $\f{A\leq B}$ but $\f{A\neq B}$ in its statement with $\f{A\leq B}$ but $\delta(\f{A})\neq \delta(\f{B})$ using Lemma \ref{lem:Omit1}. The proof of Corollary $7.10$ will remain unchanged from \cite{Las1}, establishing DOP for $\Sfin$.
\end{remark}

\section{Atomic Models of $\Sfin$ }\label{sec:AtomECMod}

In this section we study the atomic models of the theories of Baldwin-Shi hypergraphs. Our main results begin with Theorem \ref{thm:ClassAtModMain}, in which we characterize the atomic models as the existentially closed models of $\Sfin$ with \textit{finite closures} (see Definition \ref{defn:FinClo}) or equivalently those with finite closures where the closed finite substructures are those with rank $0$. This immediately yields coherence of $\z\alpha$ as a necessary condition for the existence of atomic models for $\Sfin$. We then proceed to combine the results in Section \ref{subsec:Coherence} and chain arguments to obtain Theorem \ref{thm:ExistenceOfAtomic} which establishes coherence of $\z\alpha$ is also sufficient for the existence of atomic models. We also explore the effect that rationality of $\z\alpha$, arguably the most natural form of coherence, has on atomic models of $\Sfin$. Our exploration leads to Theorem \ref{thm:RationalAlphaAndCoherence} which allows us to categorize rational $\z\alpha$ as precisely the coherent $\z\alpha$ with theories of Baldwin-Shi hypergraphs whose models isomorphically embed into an atomic model of the same cardinality. We begin with the following definitions.

\begin{defn}\label{defn:UniversalSent}
	We use $\Sfin^\forall$ to denote the set of universal sentences of $\Sfin$. Note that an $L$-structure $\f{M}$ models $\Sfin$ if and only if $\f{M}\in \z{K_L}$. 
\end{defn} 

\begin{defn}\label{defn:FinClo}
	Given $\f{M}\models\Sfin^{\forall}$, we say that $\f{M}$ has finite closures if for all $\f{A\finsubset M}$, there is some finite $\f{B}\leq \f{M}$ with $\f{A\leq M}$. 
\end{defn}

\begin{defn}\label{defn:dFunc}
	Let $\f{M}\models \Sfin^\forall$. By $d_{\f{M}}$ we denote the function $d_{\f{M}}:\{\f{A}:\f{A}\finsubset\f{M}\}:\rightarrow \mathbb{R}$ such $d_{\f{M}}(\f{A})=\inf \{\delta(\f{B}):\f{A\subseteq B}, \f{B} \text{ finite and } \f{B\leq M} \}$.
\end{defn}     

Our starting point is the following theorem due to Laskowski (Theorem 6.5 of \cite{Las1}). Its proof only uses the quantifier elimination result of Theorem \ref{thm:QuantElim} and thus holds in our generalized context.

\begin{thm}\label{thm:ClassECMod1}
	Let $\f{M}\models{\Sfin}$. Now $d_{\f{M}}(\f{A})=0$ for all finite $\f{A\subseteq{M}}$ if and only if $\f{M}$ is an e.c. model.
\end{thm}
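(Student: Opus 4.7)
The plan is to attack the two directions separately, in each case using Theorem \ref{thm:QuantElim} to reduce everything to chain-minimal extension formulas. A preliminary observation is that any two finite closed supersets of a given finite $\f{a}\subseteq\f{M}$ are $\leq$-comparable (Remark \ref{fact:ExtSmthFraClass}), so $d_\f{M}(\f{a})=\delta(\icl_\f{M}(\f{a}))$ when the closure is finite and $+\infty$ otherwise; equivalently, $d_\f{M}(\f{a})=0$ iff $\icl_\f{M}(\f{a})$ is finite with rank $0$. For the $(\Leftarrow)$ direction, set $\f{C}:=\icl_\f{M}(\f{a})$, which by hypothesis is finite with $\delta(\f{C})=0$. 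Given a chain-minimal extension formula $\Psi_{\f{A},\f{B}}(\f{a})$ witnessed by $\f{b}'$ in some $\f{N}\supseteq\f{M}$ with $\f{N}\in\z\Kfin$, the finite substructure $\f{C}\f{b}'\subseteq\f{N}$ lies in $\Kfin$; because $\delta(\f{C})=0$ and $\delta$ is non-negative on every substructure of $\f{C}\f{b}'$, one has $\f{C}\leq\f{C}\f{b}'$, and clause (2) of Definition \ref{defn:AEAxioms} produces the required copy of $\f{b}'$ in $\f{M}$.

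For the $(\Rightarrow)$ direction, fix finite $\f{a}\subseteq\f{M}$ and first consider the sub-case where $\f{C}:=\icl_\f{M}(\f{a})$ is finite. If $\delta(\f{C})>0$, Theorem \ref{thm:OmitPrelim2} supplies an essential minimal pair $(\f{C},\f{D})$ with $\delta(\f{D}/\f{C})<0$; an infinite analogue of Fact \ref{lem:FullAmalg1} places $\f{M}\oplus_\f{C}\f{D}$ in $\z\Kfin$ and realizes $\Psi_{\f{C},\f{D}}(\f{C})$ there. Existential closure of $\f{M}$ transfers this realization back into $\f{M}$, producing $\f{D}'\subseteq\f{M}$ with $\f{C}\subseteq\f{D}'\cong\f{D}$ over $\f{C}$ and $\delta(\f{D}')<\delta(\f{C})$, contradicting $\f{C}\leq\f{M}$. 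Hence $\delta(\f{C})=0$, as desired.

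The main obstacle is the remaining sub-case of the $(\Rightarrow)$ direction: $\icl_\f{M}(\f{a})$ is infinite, so no finite closed $\f{B}\leq\f{M}$ containing $\f{a}$ exists to anchor the amalgamation argument used above. My plan here is to invoke Lemma \ref{lem:Omit1} inside an outer extension of $\f{M}$ to manufacture a finite strong extension of $\f{a}$ that omits the various minimal-pair obstructions populating $\icl_\f{M}(\f{a})$, and then transfer the corresponding existential formula back to $\f{M}$ by existential closure, reducing the situation to the finite-closure sub-case already handled. This reduction is the technical heart of the theorem, and is precisely why, as the author remarks, the proof goes through using the quantifier-elimination result of Theorem \ref{thm:QuantElim} in tandem with the existence theorems from Section \ref{sec:ExistThm}.
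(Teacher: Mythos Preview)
The paper does not actually prove this theorem: it cites Theorem 6.5 of \cite{Las1} and asserts that Laskowski's argument, which relies only on the quantifier elimination of Theorem \ref{thm:QuantElim}, goes through unchanged in the present setting. There is thus no in-paper argument to compare against directly.

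Regarding your proposal: first, your direction labels are swapped --- assuming $d_\f{M}\equiv 0$ is the $(\Rightarrow)$ direction of the biconditional as written, not $(\Leftarrow)$. With that correction, your argument that $d_\f{M}\equiv 0$ implies existential closure is fine. For the converse, the finite-closure sub-case is essentially right, though note that $\f{M}\oplus_\f{C}\f{D}$ need not itself satisfy the extension axioms of $\Sfin$; you must first extend it to a genuine model of $\Sfin$ (via Lemma \ref{lem:CompletionsOfUniTh1}, or simply because $\Sfin$ is $\forall\exists$) before invoking existential closure.

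The genuine gap is the infinite-closure sub-case. What you write there is a plan, not an argument: you do not specify which existential formula you intend to realize in an outer extension, nor why its truth in $\f{M}$ would force $\icl_\f{M}(\f{a})$ to become finite. Lemma \ref{lem:Omit1} takes as input some $\f{A}\leq\f{B}$ in $\Kfin$ together with a \emph{finite} family $\Phi$ of bad extensions of $\f{B}$; here you have no finite strong $\f{B}\supseteq\f{a}$ inside $\f{M}$ to feed in, and the minimal-pair obstructions accumulating in $\icl_\f{M}(\f{a})$ form an infinite family. It is not at all clear how a single existential sentence could exclude them all, and you give no indication of the mechanism. As written, this sub-case is an expression of intent rather than a proof; it is exactly the place where one needs to look at what Laskowski actually does.
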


\subsection{Atomic Models}\label{subsec:AtomMod}

Our goal in this section is to prove Theorem \ref{thm:ClassAtModMain}. We begin with the following: 

\begin{remark}
	Given a countable model $\f{M}\models{\Sfin}$, $\f{M}$ has finite closures if and only if $\f{M}$ is the union of a strong chain $\langle \f{A}_i : i\in\omega \rangle$ of elements of $\Kfin$. 
\end{remark}

\begin{lemma}\label{lem:PrinFla}
	Let $\f{M}\models{\Sfin}$ and ${A\finsubset{M}}$ with $\delta({A})=0$. Let $\z{a}$ be a fixed enumeration of ${A}$. Then ${A\leq \f{M}}$ and $\Delta_{{A}}(\z{x})$ isolates the $\tp(\z{a})$ in $\f{M}$.
\end{lemma}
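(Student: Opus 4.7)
The plan is to split the lemma into its two assertions and dispatch each using material already in place. The first assertion, that $\f{A} \leq \f{M}$, should be immediate: since $\f{M}\models\Sfin$ we have $\f{M}\in\z{\Kfin}$, so every finite $\f{A\subseteq B\finsubset M}$ satisfies $\delta(\f{B})\geq 0 = \delta(\f{A})$, and hence $\delta(\f{A})\leq\delta(\f{B})$ for all such $\f{B}$. By Definition \ref{defn:defStrong} and the extension of $\leq$ to $\z{K_L}$, this is exactly $\f{A}\leq\f{M}$.

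For the second assertion I would apply part (2) of Lemma \ref{lem:CompleteTypesOverSets} with the closed set taken to be $\emptyset$ (which is trivially closed in $\f{M}$) and with the partial type $\pi(\z{x})=\{\Delta_{\f{A}}(\z{x})\}$ over $\emptyset$. The quantifier-free type of $\pi$ over $\emptyset$ is determined by $\Delta_{\f{A}}(\z{x})$ itself, so any realisation $\z{b}$ of this quantifier-free type generates a substructure isomorphic to $\f{A}$, giving $\delta(\z{b}/\emptyset)=\delta(\z{b})=\delta(\f{A})=0$. Thus the hypothesis of Lemma \ref{lem:CompleteTypesOverSets}(2) is met, and that lemma immediately yields a unique completion of $\pi$ to a complete type over $\emptyset$, isolated by a formula that, after unwinding the $\Delta_{A,A\z{b}}$-notation in the case $A=\emptyset$, is just $\Delta_{\f{A}}(\z{x})$.

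Finally, I would observe that $\z{a}$ itself realises $\Delta_{\f{A}}(\z{x})$ in $\f{M}$, so the unique complete type containing $\Delta_{\f{A}}(\z{x})$ is $\tp(\z{a})$; in other words $\Delta_{\f{A}}(\z{x})$ isolates $\tp(\z{a})$, as required.

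There is no real obstacle here, since both halves reduce mechanically to results already proved. The only subtlety worth double-checking is that Lemma \ref{lem:CompleteTypesOverSets}(2) is indeed applicable with the parameter set taken to be empty, and that $\delta(\z{b}/\emptyset)=\delta(\z{b})$ follows directly from Definition \ref{defn:relRank} together with $\delta(\emptyset)=0$; both points are routine.
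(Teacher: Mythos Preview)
Your proposal is correct and matches the paper's own proof essentially verbatim: the paper also derives the result by noting $\emptyset\leq\f{M}$ and $\delta(A/\emptyset)=0$ and invoking Lemma~\ref{lem:CompleteTypesOverSets}(2). Your additional explicit verification that $A\leq\f{M}$ (via $\delta(\f{B})\geq 0=\delta(\f{A})$ for all finite $\f{A\subseteq B\subseteq M}$) and your unpacking of the $\Delta_{\emptyset,\z{b}}$ notation are welcome clarifications, but the underlying argument is identical.
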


\begin{proof}
	This follows from an application of Lemma \ref{lem:CompleteTypesOverSets}, by noting that $\emptyset\leq\f{M}$ and $\delta({A}/\emptyset)=0$.
\end{proof}

\begin{lemma}\label{lem:ClassAtMod1}
	Let $\f{M}\models{\Sfin}$ be atomic. Let $A\finsubset M$. Now ${A\leq\f{M}}$ if and only if $\delta({A})=0$. Further $\f{M}$ has finite closures.
\end{lemma}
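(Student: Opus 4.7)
The $(\Leftarrow)$ direction is immediate: if $\delta(A)=0$, Lemma~\ref{lem:PrinFla} gives $A\leq\f{M}$. For $(\Rightarrow)$ I would argue by contradiction. Suppose $A\leq\f{M}$ with $\delta(A)>0$, fix an enumeration $\z{a}$ of $A$, and let $\phi(\z{x})$ isolate $\tp^{\f{M}}(\z{a})$. The quantifier elimination in Theorem~\ref{thm:QuantElim}, combined with the observation that $A\leq\f{M}$ forbids any chain-minimal extension $\f{B}\supsetneq\f{A}$ from being realized over $\z{a}$, allows me to reduce $\phi$ to the form
\[
\phi(\z{x}) \;=\; \Delta_{\f{A}}(\z{x}) \;\wedge\; \bigwedge_{j\leq k} \neg\Psi_{\f{A},\f{C}_j}(\z{x})
\]
for finitely many chain-minimal extensions $\f{C}_j$ of $\f{A}$.

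The key move is to produce a second structure realizing $\phi$ but carrying a different complete type. Using Theorem~\ref{thm:OmitPrelim2}, which supplies infinitely many pairwise non-isomorphic essential minimal pairs over $\f{A}$, pick $(\f{A},\f{D})$ with $\f{D}\not\cong_{\f{A}}\f{C}_j$ for every $j$. Strongly embed $\f{D}$ into some $\f{N}\models\Sfin$ via Corollary~\ref{lem:aleph0sat}, and let $\z{a}'$ denote the image of $\z{a}$. Since $\f{D}\leq\f{N}$ is closed in $\f{N}$ (Remark~\ref{rmk:Closed=StrongForFinite}) and $(\f{A},\f{D})$ is a minimal pair, one gets $\icl_{\f{N}}(\z{a}')=D$; applying the essential-minimal-pair property to the image of the first link of any minimal chain realizing $\f{C}_j$ over $\z{a}'$ in $\f{N}$ forces that image to equal $\f{D}$, which together with the chain-minimal structure of $\f{C}_j$ yields $\f{C}_j\cong_{\f{A}}\f{D}$, contradicting the choice. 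Hence $\f{N}\models\phi(\z{a}')$, yet $\Psi_{\f{A},\f{D}}(\z{a}')$ holds in $\f{N}$ while $\Psi_{\f{A},\f{D}}(\z{a})$ fails in $\f{M}$, contradicting isolation.

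For the finite-closures claim I would run a parallel argument with $A\finsubset M$ arbitrary. Let $\phi(\z{x})=\Delta_{\f{A}}(\z{x})\wedge\bigwedge_i \Psi_{\f{A},\f{B}_i}(\z{x})\wedge\bigwedge_j \neg\Psi_{\f{A},\f{C}_j}(\z{x})$ isolate $\tp^{\f{M}}(\z{a})$, and fix a finite $\f{B}^*\subseteq\icl_{\f{M}}(A)$ with $A\subseteq B^*$ containing a realization of each $\f{B}_i$ over $\z{a}$; then $\f{B}^*\in\Kfin$ and no $\f{C}_j$ embeds in $\f{B}^*$ over $\z{a}$. Strongly embed $\f{B}^*$ into some $\f{N}\models\Sfin$ and let $\z{a}'$ be the image of $\z{a}$; the same closure computation gives $\icl_{\f{N}}(\z{a}')=B^*$. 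If $\icl_{\f{M}}(A)\supsetneq B^*$, pick any $d\in\icl_{\f{M}}(A)-B^*$ and note that $d$ lies in some chain-minimal extension $\f{B}_*$ of $\f{A}$ inside $\icl_{\f{M}}(A)$; then $\Psi_{\f{A},\f{B}_*}(\z{a})$ holds in $\f{M}$ while $\Psi_{\f{A},\f{B}_*}(\z{a}')$ fails in $\f{N}$ (since $\f{B}_*\not\subseteq\f{B}^*=\icl_{\f{N}}(\z{a}')$), once again contradicting isolation. Hence $\icl_{\f{M}}(A)=B^*$ is finite, and $\f{M}$ has finite closures.

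The main obstacle in both parts is the closure computation $\icl_{\f{N}}(\z{a}')=D$ (respectively $=B^*$): it rests on Remark~\ref{rmk:Closed=StrongForFinite} together with the essential-minimal-pair property, and is what lets a single finite structure pin down precisely which chain-minimal extension formulas are realized at $\z{a}'$ in $\f{N}$, thereby allowing a discrepancy with $\tp^{\f{M}}(\z{a})$ to be located.
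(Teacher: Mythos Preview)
Your argument for the equivalence $A\leq\f{M}\iff\delta(A)=0$ is correct and in fact makes precise what the paper argues somewhat informally (``no boolean combination of chain-minimal formulas can rule out all of the infinitely many minimal pairs''). You isolate a single disjunct $\Delta_{\f{A}}\wedge\bigwedge_j\neg\Psi_{\f{A},\f{C}_j}$, produce an essential minimal pair $(\f{A},\f{D})$ avoiding the finitely many $\f{C}_j$'s, and exhibit a realization of that disjunct over which $\Psi_{\f{A},\f{D}}$ holds. The closure computation $\icl_{\f{N}}(\z{a}')=D$ and the subsequent chain argument are sound here precisely because $(\f{A},\f{D})$ is a \emph{single} minimal pair sitting strongly in $\f{N}$.

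The finite-closures argument, however, has a genuine gap in the last step. After embedding $\f{B}^*$ strongly into $\f{N}$ you correctly get $\icl_{\f{N}}(\z{a}')\subseteq B^*$ (equality is neither established nor needed), and you correctly verify $\f{N}\models\phi(\z{a}')$. The problem is the inference from ``$B_*\not\subseteq B^*$'' to ``$\f{N}\models\neg\Psi_{\f{A},\f{B}_*}(\z{a}')$''. The premise is about \emph{subsets of $M$}; the formula $\Psi_{\f{A},\f{B}_*}(\z{a}')$ only asks for \emph{some} embedding of $\f{B}_*$ over $\z{a}'$, and any such embedding must land in $\icl_{\f{N}}(\z{a}')\subseteq B^*$. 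So the formula fails in $\f{N}$ only if $\f{B}_*$ fails to embed into $\f{B}^*$ over $\f{A}$ \emph{as structures}. Nothing in your choice of $d$ or $\f{B}_*$ rules out another isomorphic copy of $\f{B}_*$ already lying inside $\f{B}^*$. One could try to repair this by insisting $|B_*|>|B^*|$, but then you must argue that an infinite $\icl_{\f{M}}(A)$ contains arbitrarily large chain-minimal extensions of $\f{A}$, which is not immediate and is essentially the content you are trying to establish.

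The paper takes a different route for this part: assuming $\icl_{\f{M}}(A)$ is infinite, it extracts an infinite chain of minimal pairs $\f{A}=\f{A}_0\subsetneq\f{A}_1\subsetneq\cdots$, passes via downward L\"owenheim--Skolem to a countable atomic (hence prime) $\f{M}'\preccurlyeq\f{M}$ containing the chain, elementarily embeds $\f{M}'$ into the generic $\f{M}^*$, and then uses that $\f{M}^*$ has finite closures to locate a finite strong $\f{C}\supseteq\f{A}$ in $\f{M}^*$ and derive a contradiction with the chain.
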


\begin{proof}
	Let $A,\f{M}$ be as stated above. Clearly if $\delta({A})=0$, then ${A\leq\f{M}}$. Now suppose that ${A}\leq{\f{M}}$. If $\delta({A})>0$, then by Theorem \ref{lem:InfMinPa} there are infinitely many non-isomorphic ${C}$ with $({A,C})$ a minimal pair. It follows that no single chain minimal formula, or indeed a boolean combination of chain minimal formulas can rule out the existence of all of these minimal pairs over $A$ as the sentences of $\Sfin$ dealing with extensions only insist upon the existence of \textit{strong} extensions. Since any formula is equivalent to a boolean combination of chain minimal formulas this contradicts the fact that the model is atomic and hence $tp(A/\emptyset)$ is isolated. Thus $\delta(A)=0$. 
	
	We claim that $\f{M}$ has finite closures. Assume to the contrary that $\f{M}$ does not have finite closures. Let $\f{A}\finsubset \f{M}$ be such that there is no finite  $\f{C\leq M}$ such that $\f{A\subseteq M}$. It now follows that there is a $\subseteq$ increasing sequence  $\{\f{A}_i:i\in\omega,\f{A}_i\subseteq \f{M} \text{ such that } \f{A}_0=\f{A} \text{ and each } (\f{A}_i,\f{A}_{i+1}) \text{ is a minimal pair}\}$. Using the downward Lowenhiem Skolem Theorem, we may construct a countable $\f{M'\preccurlyeq M}$ such that $\bigcup_{i<\omega}A_i\subseteq M'$. Note that $M'$ is a countable, atomic and hence prime model of $\Sfin$. We may as well assume that $\f{M'}\preccurlyeq\f{M^*}$ for notational convenience. Recall that $\f{M^*}$ has finite closure and let $\f{A\subseteq C\leq M^*}$ where $|C|$ is finite. Let $i$ be the least integer such that $\f{A}_i\nsubseteq \f{C}$. Clearly $i\geq 1$ and $C\neq A_{i-1}$ (for if $A_{i-1}=C$, then $A_{i}$ is a minimal pair over $C$, which contradicts $C\leq M^*$). Now $C\leq CA_i$ as $C\leq M^*$ and ${A}_i\subseteq CA_{i}$. By using Fact \ref{fact:ExtSmthFraClass} we obtain that $C\cap{A}_i\leq A_i$. Further $A_{i-1}\subseteq C\cap A_{i}\subsetneq A_{i}$ as $A_{i}\not\subseteq C$. But then $A_{i-1}\leq C\cap {A}_{i}$ as $(A_i,A_{i+1})$ is a minimal pair. By the transitivity of $\leq$ we then obtain $A_{i-1}\leq A_{i}$, a contradiction that shows $\f{M'}$ has finite closures.
\end{proof}

\begin{lemma}\label{lem:ClassAtMod2}
	Let $\f{M}\models{\Sfin}$. Assume that $d_{\f{M}}(\f{A})=0$ for all finite $\f{A\subseteq{M}}$ and that $\f{M}$ has finite closures. Then $\f{M}$ is atomic.
\end{lemma}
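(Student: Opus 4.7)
My plan is to show that every finite tuple $\bar a$ in $\f M$ has an isolated type, and the isolating formula will be an extension formula over the empty structure coming from the intrinsic closure of $\bar a$ in $\f M$.

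Fix a finite tuple $\bar a$ from $\f M$ and let $\f A$ be the substructure it induces. The key object is $\f A^{\ast} := \icl_{\f M}(\f A)$. I would first argue that $\f A^{\ast}$ is finite: since $\f M$ has finite closures, there is some finite $\f B$ with $\f{A\subseteq B\leq M}$, and any such $\f B$ is closed in $\f M$ (by Remark \ref{rmk:Closed=StrongForFinite} applied to finite structures, where closed and strong coincide), so $\f A^{\ast}\subseteq \f B$ and hence $\f A^{\ast}$ is finite. Because finite closed substructures of $\f M$ are strong in $\f M$, we get $\f A^{\ast}\leq \f M$.

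The main point is then that $\delta(\f A^{\ast}) = 0$. Indeed, for every finite $\f B$ with $\f A\subseteq \f B\leq \f M$, $\f B$ is closed in $\f M$ and contains $\f A$, so by minimality of the intrinsic closure $\f A^{\ast}\subseteq \f B$. Combined with $\f A^{\ast}\leq \f M$ this gives $\f A^{\ast}\leq \f B$ and hence $\delta(\f A^{\ast})\leq \delta(\f B)$. Taking the infimum over such $\f B$ yields $\delta(\f A^{\ast})\leq d_{\f M}(\f A) = 0$, and since $\f A^{\ast}\in \Kfin$ forces $\delta(\f A^{\ast})\geq 0$, we conclude $\delta(\f A^{\ast})=0$.

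Now fix an enumeration $\bar a\bar b$ of $\f A^{\ast}$ extending the given enumeration $\bar a$ of $\f A$. By Lemma \ref{lem:PrinFla}, the atomic diagram $\Delta_{\f A^{\ast}}(\bar x,\bar y)$ isolates $\tp(\bar a\bar b)$ in $\f M$. Set $\Psi(\bar x) := \exists \bar y\, \Delta_{\f A^{\ast}}(\bar x,\bar y)$. Clearly $\f M\models \Psi(\bar a)$. To see $\Psi$ isolates $\tp(\bar a)$, let $\bar a'$ realize $\Psi$ in some model $\f M'$ of $\Sfin$; pick $\bar b'$ with $\f M'\models \Delta_{\f A^{\ast}}(\bar a',\bar b')$. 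Since $\Delta_{\f A^{\ast}}(\bar x,\bar y)$ generates a single complete type (isolated in $\f M$), $\bar a'\bar b'$ has the same complete type as $\bar a\bar b$ in $\Sfin$, so in particular $\tp(\bar a') = \tp(\bar a)$. Thus $\Psi(\bar x)$ isolates $\tp(\bar a)$, and $\f M$ is atomic.

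There is no real obstacle here — once one sees that the hypotheses force $\icl_{\f M}(\bar a)$ to be a finite closed set of rank zero, the result is immediate from Lemma \ref{lem:PrinFla} and the equivalence between closed and strong for finite structures; the only subtlety worth spelling out is the argument that $d_{\f M}(\f A)$ is actually attained at $\f A^{\ast}$.
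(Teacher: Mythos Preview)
Your proof is correct and follows essentially the same route as the paper: both identify $\f{A}^{*}=\icl_{\f M}(\f A)$, argue it is finite with $\delta(\f A^{*})=0$, and exhibit the extension formula $\exists\bar y\,\Delta_{\f A^{*}}(\bar x,\bar y)$ as the isolating formula. The only difference is packaging: you invoke Lemma~\ref{lem:PrinFla} and the general fact that existential projections of isolated types are isolated, while the paper argues directly that this formula decides all chain-minimal extension formulas (any chain of minimal pairs over a realization must live inside the rank-zero witness, since rank zero implies closed). Your version is slightly more modular; the paper's is slightly more self-contained---but the underlying idea is identical.
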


\begin{proof}
	Let $\f{A\subseteq{M}}$. We begin by fixing an enumeration $\z{a}$ of $\f{A}$. Let $\icl_{\f{M}}{\f{(A)}}=\f{C}$. As $\f{M}$ has finite closures, it follows that $\f{C}$ is finite. It is clear that $d_{\f{M}}(\f{A})=d_{\f{M}}(\f{C})=\delta(\f{C})=0$. Note that if $\f{A=C}$ then we have already established the result and that if $\f{A\neq C}$, then there is no $\f{A\subseteq B\subsetneq C}$ such that $\delta(\f{B})=0$. We claim that the formula $\Psi_{{A,C}}(\z{x})=\Delta_{{A}}(\z{x})\wedge\exists{\z{y}}\Delta_{A,C}(\z{x}, \z{y})$ isolates $\tp(\z{a})$. Now it suffices to show that $\Psi_{{A,C}}(\z{x})$ decides the chain minimal extension formulas.  
	
	Let $\f{M}'\models\Sfin$ and assume that $\f{A'\subseteq M'}$. Let $\z{a'}$ be a fixed enumeration of $\f{A'}$ and assume that $\f{M'}\models\Psi_{A,C}(\z{a'})$. Let $\f{A'\subseteq C'}\subseteq\f{M}'$ and $\z{c'}$ be an enumeration of $C'-A'$ such that $\f{M'}\models \Delta_{{A}}(\z{a'})\wedge\Delta_{A,C}(\z{a'}, \z{c'})$. Note that ${C'\leq M'}$ as $\delta(\f{C'})=0$. Now given a chain of minimal pairs $\f{A'}=\f{B}_0\subseteq\ldots\subseteq\f{B}_n\subseteq \f{M'}$, we have that  $\f{B}_n\subseteq\f{C}'$ as $\f{C}'$ is closed in $\f{M'}$. Thus $\Psi_{{A,C}}(\z{x})$ decides all chain minimal extension formulas thus isolates the type of $\f{A}$.  
\end{proof}

We now obtain the following theorem: 

\begin{thm}\label{thm:ClassAtModMain}
	Let $\f{M}\models{\Sfin}$. The following are equivalent \begin{enumerate}
		\item $\f{M}$ is atomic
		\item $d_{\f{M}}(\f{A})=0$ for all finite $\f{A\subseteq{M}}$ and $\f{M}$ has finite closures.
		\item $\f{M}$ is existentially closed and has finite closures.
		\item For any $\f{A\subseteq{M}}$ finite, there is $\f{B\supseteq{A}}$ such that $\f{B\subseteq{M}}$, $\f{B}$ is finite and $\delta(\f{B})=0$ 
	\end{enumerate} 
\end{thm}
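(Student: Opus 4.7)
The plan is to show the cycle $(1)\Rightarrow (4)\Rightarrow (2)\Rightarrow (1)$ and then deduce $(2)\Leftrightarrow (3)$ from Theorem~\ref{thm:ClassECMod1}. The substantive content has essentially been bundled into Lemmas~\ref{lem:ClassAtMod1} and \ref{lem:ClassAtMod2} already, so the remaining argument is a book-keeping exercise that pieces those lemmas together with Theorem~\ref{thm:ClassECMod1}.

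For $(1)\Rightarrow (4)$, fix a finite $\f{A}\subseteq \f{M}$. By Lemma~\ref{lem:ClassAtMod1}, $\f{M}$ has finite closures, so we may choose a finite $\f{B}\supseteq \f{A}$ with $\f{B}\leq \f{M}$. The same lemma says that any finite strong substructure of an atomic model has rank $0$, so $\delta(\f{B})=0$. This $\f{B}$ witnesses clause $(4)$.

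For $(4)\Rightarrow (2)$, let $\f{A}\finsubset \f{M}$ and choose the finite $\f{B}\supseteq \f{A}$ from $(4)$ with $\delta(\f{B})=0$. Since every finite $\f{B}\subseteq\f{B'}\finsubset \f{M}$ lies in $\Kfin$ we have $\delta(\f{B'})\geq 0 = \delta(\f{B})$, hence $\f{B}\leq \f{M}$; this already shows $\f{M}$ has finite closures. Moreover, $d_{\f{M}}(\f{A})\leq \delta(\f{B})=0$, and $d_{\f{M}}(\f{A})\geq 0$ because every finite strong substructure of $\f{M}$ is in $\Kfin$. Thus $d_\f{M}(\f{A})=0$, which is $(2)$.

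The implication $(2)\Rightarrow (1)$ is exactly Lemma~\ref{lem:ClassAtMod2}. Finally, $(2)\Leftrightarrow (3)$ follows by combining Theorem~\ref{thm:ClassECMod1}, which identifies the condition $d_{\f{M}}(\f{A})=0$ for all finite $\f{A}\subseteq \f{M}$ with $\f{M}$ being existentially closed, and the common hypothesis that $\f{M}$ has finite closures. I do not expect any genuine obstacle here; the only mild point worth flagging is in $(4)\Rightarrow(2)$, where one should remember that the finite-closure property must be read off the witness $\f{B}$ directly rather than trying to invoke any prior closure bound.
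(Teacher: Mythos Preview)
Your proposal is correct and essentially mirrors the paper's own proof: both rely on Lemmas~\ref{lem:ClassAtMod1} and \ref{lem:ClassAtMod2} for the link between atomicity and clause~$(2)$, on Theorem~\ref{thm:ClassECMod1} for $(2)\Leftrightarrow(3)$, and on the observation that any finite $\f{B}\subseteq\f{M}$ with $\delta(\f{B})=0$ is automatically strong for the equivalence with~$(4)$. The only cosmetic difference is that you organize the implications as a cycle $(1)\Rightarrow(4)\Rightarrow(2)\Rightarrow(1)$, whereas the paper proves the pairwise equivalences $(1)\Leftrightarrow(2)$, $(2)\Leftrightarrow(3)$, $(2)\Leftrightarrow(4)$ directly.
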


\begin{proof}
	The equivalence of $(1)$ and $(2)$ is immediate from lemma \ref{lem:ClassAtMod1} and lemma \ref{lem:ClassAtMod2}. The equivalence of $(2)$ and $(3)$ is immediate from Theorem \ref{thm:ClassECMod1}. We now show the equivalence of $(2)$ and $(4)$:
	
	Assume $(2)$. Then take $\f{\icl{(A)}=B}$. Since $\f{M}$ has finite closures, it follows that $\f{B}$ is finite. Since $d_{\f{M}}(\f{A})=0$ it follows that $d_{\f{M}}(\f{A})=\delta(\f{B})=0$ and thus $(4)$ follows.  Now assume $(4)$ holds. Since any $\f{B}$ with $\delta(\f{B})=0$ is strong in $\f{M}$. Now pick a $\f{B'}$ such that $\f{A\subseteq{B'}\subseteq{M}}$ and $\f{B'}$ is finite, $\subseteq$ minimal and $\delta(\f{B'})=0$.  
\end{proof}

\subsection{Existence of atomic models}\label{subsec:PrimeMod}

We begin this section by developing tools to prove Theorem \ref{thm:ExistenceOfAtomic} which establishes that coherence is necessary and sufficient for the existence of atomic models. The proof of sufficiency will involve several steps. The idea is to use the $\forall\exists$-axiomatization of $\Sfin$ to construct atomic models as the union of a chain under $\subseteq$. However, as dictated by Theorem \ref{thm:ClassAtModMain}, atomic models of $\Sfin$ must have finite closures. This introduces the need to carefully keep track of how closures change as you go up along the chain. 

We then proceed to prove Theorem \ref{thm:RationalAlphaAndCoherence} which establishes that for coherent $\z\alpha$, the rationality of $\z\alpha$ is equivalent to every model of $\Sfin$ being isomorphically embeddable in an atomic model of $\Sfin$. A key step in the proof is Lemma \ref{lem:CoherenceWithIrrational}, which constructs a model that does not embed into \textit{any} atomic model by exploiting the fact that there is no decreasing sequence of real numbers of order type $\omega_1$.

\begin{defn}\label{defn:PreservationOfClosures}
	Let $\f{M,N}\models\Sfin^\forall$ with $\f{M\subseteq N}$. We say that $\f{N}$ \textit{preserves closures for} $\f{M}$ if ${X\subseteq M}$ is closed in $M$, then ${X}$ is closed in $N$. 
\end{defn} 

\begin{lemma}\label{lem:ModelsOfUniTh}
	Let $\f{M}\models\Sfin^\forall$ and $\f{A,B}\in{\Kfin}$. Assume that $\f{B\cap{M}=A}$ and let $\f{N=M\oplus_{A}B}$.  \begin{enumerate}
		\item  If $\f{A\leq B}$ or $\f{A\leq M}$, then $\f{N}\models\Sfin^\forall$.
		\item  If $\f{A\leq{B}}$, then $\f{N}$ preserves closures for $\f{M}$ 
		\item If $\f{A\leq{M}}$, then $\f{B}\leq{\f{N}}$
		\item If $\f{A\leq B}$ or $\f{A\leq M}$ and $\f{M}$ has finite closures, then so does $\f{N}$.
	\end{enumerate}  
\end{lemma}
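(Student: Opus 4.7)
The plan is to analyze all four parts through the free-join decomposition of finite substructures. For any finite $\f{C\finsubset N}$, set $\f{C_M}=\f{C\cap M}$, $\f{C_B}=\f{C\cap B}$, $\f{C_A}=\f{C\cap A}$; since $\f{N=M\oplus_A B}$ has no cross-relations, $\f{C}=\f{C_M}\oplus_{\f{C_A}}\f{C_B}$ and $\delta(\f{C})=\delta(\f{C_M})+\delta(\f{C_B/C_A})$. Each clause then reduces to tracking which pieces of the decomposition sit strongly in which ambient structure.

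For (1), given any $\f{C\finsubset N}$, I would enlarge to $\f{D=A\cup C}$ and decompose $\f{D=(A\cup C_M)\oplus_\f{A}(A\cup C_B)}$. If $\f{A\leq M}$ then $\f{A\leq A\cup C_M}$; if $\f{A\leq B}$ then $\f{A\leq A\cup C_B}$. Either way Fact \ref{lem:FullAmalg1} (with roles of the two sides possibly swapped) places $\f{D}$ in $\Kfin$, so $\f{C}\in\Kfin$ by closure under substructure, giving $\f{N}\models\Sfin^\forall$. For (3), assuming $\f{A\leq M}$, any $\f{B\subseteq X\finsubset N}$ satisfies $\f{X\cap A=A}$ and $\f{X\cap B=B}$ by sandwich arguments, so $\f{X=X_M\oplus_\f{A}B}$. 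The free-join rank formula gives $\delta(\f{X})=\delta(\f{X_M})+\delta(\f{B/A})\geq\delta(\f{A})+\delta(\f{B/A})=\delta(\f{B})$, using $\f{A\leq X_M}$ which follows from $\f{A\leq M}$ and $\f{A\subseteq X_M\finsubset M}$. For (4), finite closures in $\f{M}$ supply a finite $\f{D_M\leq M}$ containing $\f{A\cup C_M}$; set $\f{D=D_M\oplus_\f{A}B}$, which is finite and contains $\f{C}$. For any $\f{D\subseteq X\finsubset N}$, the same decomposition yields $\f{X=X_M\oplus_\f{A}B}$, so $\delta(\f{X})-\delta(\f{D})=\delta(\f{X_M})-\delta(\f{D_M})\geq 0$ because $\f{D_M\leq M}$.

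The main obstacle is (2). Assume $\f{A\leq B}$, $\f{X}$ is closed in $\f{M}$, and $(\f{P,Q})$ is a minimal pair with $\f{P\finsubset X}$ and $\f{Q\subseteq N}$. I would argue by contradiction that $\f{Q_B\subseteq A}$, equivalently $\f{Q=Q_M}$. Suppose $\f{Q\neq Q_M}$. Then $\f{P\subseteq Q_M\subsetneq Q}$, and the minimal pair definition forces $\f{P\leq Q_M}$, i.e. $\delta(\f{Q_M/P})\geq 0$. Since $\f{A\leq B}$, Remark \ref{fact:ExtSmthFraClass} gives $\f{Q_A=Q_B\cap A\leq Q_B}$, hence $\delta(\f{Q_B/Q_A})\geq 0$. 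Combining via the free-join formula $\delta(\f{Q/P})=\delta(\f{Q_M/P})+\delta(\f{Q_B/Q_A})\geq 0$, contradicting minimality. So $\f{Q=Q_M\subseteq M}$, and closedness of $\f{X}$ in $\f{M}$ forces $\f{Q\subseteq X}$.

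Once (2) is in hand, everything else is essentially bookkeeping with the decomposition. The subtle point throughout is that one must carefully distinguish which of the two hypotheses $\f{A\leq B}$ or $\f{A\leq M}$ each clause needs, since (2) fails in general under only $\f{A\leq M}$ and similarly (3) needs $\f{A\leq M}$ specifically to control $\f{X_M}$.
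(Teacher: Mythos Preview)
Your proof is correct and follows essentially the same free-join decomposition strategy as the paper for all four clauses. Your handling of (2) is organized slightly differently---and more transparently---than the paper's: where the paper computes $e(B',\cdot)$ directly, you instead exploit the minimal-pair hypothesis to obtain $\f{P}\leq\f{Q_M}$ and then combine $\delta(\f{Q_M/P})\geq 0$ with $\delta(\f{Q_B/Q_A})\geq 0$ via the free-join rank identity; but the underlying idea is the same.
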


\begin{proof}	
	$(1)$: Assume that $\f{A\leq B}$ or $\f{A\leq M}$. We show that $\f{N}\models\Sfin^\forall$. Note that if not, there is some $\f{A\subseteq C\finsubset{M}}$ such that for some $\f{B'\subseteq{B}}$, $\f{A'\subseteq A}$ and $\f{C'\subseteq C}$, $\f{B'\oplus_{A'}C'}\notin{\Kfin}$. But if this were the case then $\f{B\oplus_{A}C}\notin{\Kfin}$. However we have that $\f{A\leq{C}}$ or $\f{A\leq{B}}$ by our assumption and hence $\f{B\oplus_{A}C}\in{\Kfin}$ by Fact \ref{lem:FullAmalg1}. A contradiction that establishes the our claim. \\
	
	\noindent $(2)$: Assume that $\f{A\leq{B}}$. Let $X\subseteq M$ be closed in $\f{M}$. By way of contradiction assume that $X$ is not closed in $\f{N}$. Thus there is some $D\finsubset X$, $E\finsubset N$, $(D,E)$ is a minimal pair but $E\not\subseteq X$. Let ${A'}=E\cap{A}$, ${B'}=E\cap{(B-A)}$ and ${D'}=E\cap{(D-A)}$. Now note that $0>\delta(E/D)=\delta({B'}/{D'A'})=\delta({B'})-e({B'},{D'A'})\geq\delta({B'})-e({B'},{D'A})={\delta({B'})-e({B'},{A})}\geq{0}$ using $(1)$ of Fact \ref{lem:MonoRelRankOvBase}. Thus it follows that $\f{N}$ preserves closures for $\f{M}$.\\ 
	
	For the proof of $(3), (4)$, first note that if $B\subseteq F\finsubset N$, then we may write $F=B\oplus_{A}F'$ with $F'\subseteq M$. Further if $F\subseteq G\subseteq N$ with $G=B\oplus_{A}G'$, then $\delta(G/F)=\delta(G'/F')$. Also to show that $F\finsubset N$ is strong in $N$, it suffices to show that $\delta(G/F)\geq 0$ for all finite $F\subseteq G\finsubset N$. \\
	
	\noindent $(3)$: Assume that $\f{A\leq{M}}$. Given $B\subseteq G\finsubset N$. Take $F=B\oplus_{A}A=B$ and $G=B\oplus_A G'$ where $G'=G\cap{M}$. Now it follows that $\delta(G/F)=\delta(G'/A)$. Since $\f{A\leq M}$, it follows that $\delta(G'/A)\geq 0$. Thus $\f{B\leq N}$. \\ 
	
	\noindent $(4)$:  Assume that $\f{M}$ has finite closures. We wish to show that $\f{N}$ has finite closures. Let $X\finsubset N$. Since intrinsic closures are monotonic with respect to $\subseteq$, we may as well assume that $B\subseteq X$. Let $F=\icl_{{M}}(X\cap M)$. Note that $F'$ is finite because $\f{M}$ has finite closures. Take $F=B\oplus_A F'$ and note that $X\subseteq F$. Fix $F\subseteq G \finsubset N$ with $G=B\oplus_A G'$ where $G'=G\cap N$. Now $\delta({G/F}) = \delta({G'/F'})$ from which the result follows as $\delta(G'/F')\geq 0$ as $F'\leq M$. 
\end{proof}

\begin{lemma}\label{lem:PresOfClos}
	Let $\langle\f{M}_{\beta}\rangle_{\beta<\kappa}$ be a $\subseteq$-chain of models of $\Sfin^\forall$ with $\f{M}_\gamma=\bigcup_{\beta<\gamma}\f{M}_{\beta}$ for limit $\gamma$. Assume that $\f{M}_{\beta+1}$ preserves closures for $\f{M}_\beta$ for each $\beta<\kappa$. Then $\f{M}=\bigcup_{\beta<\kappa}\f{M}_\beta$ preserves closures for each $\f{M}_\beta$, $\beta<\kappa$. Further if $\f{M}_\beta$ has finite closures for each $\beta<\kappa$, then so does $\f{M}$  
\end{lemma}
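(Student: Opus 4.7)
The plan is to prove both assertions by transfinite induction along the chain, exploiting in each case that a potential ``witness'' (either the second member of a minimal pair, or the finite set whose closure we must capture) is finite and therefore already appears at some bounded stage.

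For the closure-preservation claim, fix $\beta_0<\kappa$ and a set $X\subseteq M_{\beta_0}$ closed in $\f{M}_{\beta_0}$. I would show by transfinite induction on $\gamma$ with $\beta_0\leq\gamma<\kappa$ that $X$ is closed in $\f{M}_\gamma$. The base case $\gamma=\beta_0$ is given; the successor step is exactly the preservation hypothesis applied to $\f{M}_\gamma\subseteq \f{M}_{\gamma+1}$ together with the inductive hypothesis that $X$ is closed in $\f{M}_\gamma$; and at limits $\gamma$ one observes that any minimal pair $(\f{D},\f{E})$ with $D\finsubset X$ and $\f{E\subseteq M_\gamma}$ has $E$ finite, hence $\f{E\subseteq M_\delta}$ for some $\delta<\gamma$, so $E\subseteq X$ by the inductive hypothesis at $\delta$. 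The same finite-witness trick then passes the closure of $X$ from the $\f{M}_\gamma$'s to $\f{M}$: any minimal pair $(\f{D},\f{E})$ with $D\finsubset X$ and $\f{E\subseteq M}$ has $\f{E\subseteq M_\gamma}$ for some $\gamma<\kappa$, so $E\subseteq X$ by what was just proved.

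For the finite-closures claim, assume each $\f{M}_\beta$ has finite closures and fix $\f{A\finsubset M}$. Pick $\beta<\kappa$ with $\f{A\subseteq M_\beta}$, and use finite closures of $\f{M}_\beta$ to obtain a finite $\f{B}$ with $\f{A\subseteq B\leq M_\beta}$; in particular $B$ is closed in $\f{M}_\beta$. By the already established part of the lemma, $B$ is closed in $\f{M}$. To promote closed to strong, I would invoke the equivalence of ``closed'' and ``strong'' for finite subsets (Remark \ref{rmk:Closed=StrongForFinite}): given any $\f{B\subseteq C\finsubset M}$, if $\f{B\nleq C}$ then there is a minimal pair $(\f{D},\f{E})$ with $\f{D\subseteq B}$ and $\f{E\subseteq C\subseteq M}$ but $E\not\subseteq B$, contradicting that $B$ is closed in $\f{M}$. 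Hence $\f{B\leq M}$.

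The only real bookkeeping is the closed-versus-strong distinction for finite subsets of an infinite ambient structure, but this is immediate from Remark \ref{rmk:Closed=StrongForFinite}. Once that is in place, both parts reduce to the single observation that every object of interest in the definitions involved is finite and therefore lives at some bounded level of the chain, so the transfinite inductions are entirely routine.
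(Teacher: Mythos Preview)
Your proposal is correct and follows essentially the same approach as the paper: both argue by transfinite induction along the chain, using that the second member of a minimal pair is finite and hence already lives in some $\f{M}_\gamma$. Your write-up is in fact more explicit than the paper's (which handles the second claim by ``a similar argument''), and your invocation of Remark~\ref{rmk:Closed=StrongForFinite} to pass from ``closed'' to ``strong'' for the finite $B$ is exactly the right bridge.
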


\begin{proof}
	Let $\f{M}$ be as above and let $\f{X}\subseteq\f{M}_\beta$ be closed. We claim that if $\f{X}$ is closed in $\f{M}$, then it is closed in $\f{N}$. By way of contradiction, suppose not. Then there is some minimal pair $(\f{A,B})$ with $\f{B\subseteq M}, \f{A\subseteq X}$ and $\f{B\subsetneq X}$ that witnesses this. Let $\gamma>\beta$ be the least ordinal such that $\f{B}\subseteq \f{M}_\gamma$. As closures are preserved for successor ordinals, it follows that $\gamma$ is not a successor ordinal. Thus $\gamma$ must be a limit ordinal. But $\f{M}_\gamma = \bigcup_{\beta<\gamma}\f{M}_{\beta}$ which implies $\f{B}\subseteq \f{M}_{\gamma'}$ for some $\gamma'<\gamma$. But then $\f{X}$ is not closed in $\f{M}_{\gamma'}$, which contradicts the minimality of $\gamma$. Thus the first claim is true. The second claim follows by a similar argument.   
\end{proof}

We now illustrate how to extend a model of the universal sentences of $\Sfin$ to a model of $\Sfin$, while preserving closures, a key step towards building atomic models. 

\begin{lemma}\label{lem:CompletionsOfUniTh1}
	Let $\f{M}\models\Sfin^\forall$ be infinite. There exists $\f{N}\models\Sfin$ such that $\f{M\subseteq{N}}$, $|M|=|N|$, $\f{N}$ preserves closures for $\f{M}$. Further if $\f{M}$ has finite closures, then $\f{N}$ has finite closures too.
\end{lemma}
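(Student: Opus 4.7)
The plan is to construct $\f{N}$ as the union of an $\omega$-chain $\f{M}=\f{N}_0\subseteq\f{N}_1\subseteq\cdots$ of models of $\Sfin^\forall$, where each $\f{N}_{n+1}$ is obtained from $\f{N}_n$ by satisfying every strong-extension axiom instance currently threatened at stage $n$. At the end, any instance of the $\forall\exists$-axiom scheme of $\Sfin$ that appears in $\f{N}$ involves only finitely many parameters, so it lies in some $\f{N}_n$ and is handled at stage $n+1$; thus $\f{N}\models\Sfin$.

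To build $\f{N}_{n+1}$ from $\f{N}_n$, enumerate in order type $\lambda_n=|N_n|$ all triples $(f,\f{A},\f{B})$ such that $\f{A}\leq\f{B}$ in $\Kfin$ (up to isomorphism over $\f{A}$) and $f\colon\f{A}\hookrightarrow\f{N}_n$ is an embedding; the number of such triples is $|N_n|\cdot\aleph_0=|N_n|$ since $L$ is finite relational and $\f{M}$ is infinite. Set $\f{N}_{n,0}=\f{N}_n$, and at successor stage $\beta+1$ take a fresh copy $\f{B}'$ of $\f{B}_\beta$ over $f_\beta(\f{A}_\beta)$ disjoint from $\f{N}_{n,\beta}$ and let $\f{N}_{n,\beta+1}=\f{N}_{n,\beta}\oplus_{f_\beta(\f{A}_\beta)}\f{B}'$; at limits take unions. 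Since $\f{A}_\beta\leq\f{B}_\beta$, clause $(1)$ of Lemma \ref{lem:ModelsOfUniTh} guarantees $\f{N}_{n,\beta+1}\models\Sfin^\forall$, clause $(2)$ gives preservation of closures for $\f{N}_{n,\beta}$, and clause $(4)$ propagates finite closures. Transfinite preservation at limit ordinals within stage $n$, and the fact that the resulting $\f{N}_{n+1}$ still preserves closures for (and finite closures of) $\f{N}_n$, follow from Lemma \ref{lem:PresOfClos}. Each successor step adds finitely many elements, and each limit step does not increase cardinality, so $|N_{n,\beta}|=|N_n|$ for all $\beta<\lambda_n$ and hence $|N_{n+1}|=|N_n|=|M|$.

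Set $\f{N}=\bigcup_{n<\omega}\f{N}_n$. Then $|N|=|M|$, and Lemma \ref{lem:PresOfClos} applied to the $\omega$-chain $\langle\f{N}_n\rangle$ yields that $\f{N}$ preserves closures for every $\f{N}_n$, in particular for $\f{M}=\f{N}_0$; the same lemma transfers finite closures if $\f{M}$ has them. Finally, to see $\f{N}\models\Sfin$, given any $\f{A}\leq\f{B}$ in $\Kfin$ and any embedding $f\colon\f{A}\to\f{N}$, finiteness of $\f{A}$ places $f(\f{A})$ in some $\f{N}_n$, so the triple $(f,\f{A},\f{B})$ appears in the enumeration used to build $\f{N}_{n+1}$ and is realized there.

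The main technical point is the bookkeeping at each stage: ensuring that the enumeration of tasks has the right cofinality so that cardinality is preserved, and that the free amalgamations within a single stage are compatible (via Lemma \ref{lem:PresOfClos} applied inside stage $n$) so that closures of $\f{N}_n$ in $\f{N}_{n,\beta}$ are maintained as $\beta$ grows. No new amalgamation lemma is needed beyond parts $(1)$, $(2)$, and $(4)$ of Lemma \ref{lem:ModelsOfUniTh} and Lemma \ref{lem:PresOfClos}; the construction is essentially a Fraïssé-style chain argument adapted to the universal-theory setting.
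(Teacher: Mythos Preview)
Your proof is correct and follows essentially the same approach as the paper: a routine chain argument that iteratively frees in the required strong extensions using Lemma~\ref{lem:ModelsOfUniTh} at successor steps and Lemma~\ref{lem:PresOfClos} at limits. The paper's version organizes the construction slightly differently (handling one finite $\f{A}\subseteq\f{M}$ at a time before iterating), but the content and the lemmas invoked are the same; your bookkeeping is simply more explicit.
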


\begin{proof}
	Let $\f{M}\models\Sfin^\forall$. Fix a finite $\f{A\subseteq M}$. A routine chain argument using Lemma \ref{lem:ModelsOfUniTh} allows us to create $\f{M'}$ with the following properties: \begin{enumerate}
		\item $\f{M'}$ preserves closures for $\f{M}$ and $|M'|=|M|$
		\item If $\f{B}\in\Kfin$ with $\f{A\leq B}$, there is some $g$ that embeds $\f{B}$ into $\f{N}$ over $\f{A}$.
		\item If $\f{B}_1,\f{B}_2\in\Kfin$ with $\f{A}\leq \f{B}_1,\f{B}_2$ and $\f{B}_1,\f{B}_2$ are not isomorphic over $\f{A}$, then there are embeddings $g_1,g_2$ of $\f{B}_1,\f{B}_2$ over $\f{A}$ such that $g_1(\f{B}_1), g_2(\f{B}_2)$ are freely joined over $\f{A}$.
	\end{enumerate} 
	
	Note that $\f{A}$, when considered as a substructure of $\f{M'}$, satisfies the extension formulas required by $\Sfin$. Further, by an application of Lemma \ref{lem:PresOfClos}, it follows that if $\f{M}$ has finite closures, then so does $\f{M'}$. Iterating this process and using a routine chain argument, we can construct $\f{N}$ as required. The fact that $\f{N}$ has finite closures if $\f{M}$ does follows from an application of Lemma \ref{lem:PresOfClos}. \end{proof}

We now introduce the class $K_0$. It contains \textit{all} the finite structures of $\Kfin$ that \textit{may sit} strongly inside an \textit{atomic model} of $\Sfin$.

\begin{defn}\label{defn:ZeroClass}
	We let  $K_{0}=\{\f{A}:\f{A}\in\Kfin \text{ and } \delta(\f{A})=0 \}$. Further we let $\z{K_0}=\{\f{X}:\f{X}\models\Sfin^\forall$ $ \text{and for any } \f{A\finsubset{Y}} \text{ there exists } \f{B\finsubset{X}} \text{ with } \f{A\subseteq B} \text{ and }\delta(\f{B})=0   \}$.
\end{defn}

\begin{remark}
	Let $\f{D}\in\Kfin$, and $\f{X}\models\Sfin^\forall$ with $\f{D}\subseteq\f{X}$. Note that if $\delta(\f{D})=0$, then $\f{D\leq X}$. Thus it follows that if $\f{X}\in{\z{K_0}}$, then $\f{X}$ has finite closures. 
\end{remark}

We are now in a position to show that coherence of $\z\alpha$ is a sufficient condition for the existence of atomic models. 

\begin{lemma}\label{lem:ArbAtomicModels}
	Let $\z\alpha$ be coherent. Suppose $\f{M}\in\z{K_0}$ with $|M|=\kappa$. Then we can construct $\f{N}\models\Sfin$ such that $\f{N}\supseteq{\f{M}}$, $\f{N}$ is atomic and $|M|=|N|$. Thus for any $\kappa$ there is an atomic model of $\Sfin$ of size $\kappa$.
\end{lemma}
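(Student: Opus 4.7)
The plan is to build $\f{N}$ as the union of a $\subseteq$-increasing continuous chain $\langle \f{M}_\beta : \beta < \kappa \rangle$ with $\f{M}_0 = \f{M}$, each $\f{M}_\beta \in \z{K_0}$ of cardinality $\kappa$, arranging by standard bookkeeping that every extension axiom of $\Sfin$ is eventually witnessed. The conclusion will then follow from Theorem \ref{thm:ClassAtModMain}$(4)$: since $\f{N} \in \z{K_0}$, once $\f{N}$ is verified to model $\Sfin$, it is atomic.

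The heart of the construction is the successor step. Suppose we have $\f{M}_\beta \in \z{K_0}$, together with $\f{A\leq B}$ in $\Kfin$ and an embedding $f:\f{A}\to \f{M}_\beta$ which we must extend. Since $\f{M}_\beta \in \z{K_0}$, fix a finite $\f{C}\finsubset \f{M}_\beta$ with $f(\f{A})\subseteq C$ and $\delta(\f{C})=0$; note that $\f{C}\leq \f{M}_\beta$ automatically. Form the free amalgam $\f{D}_0 = \f{C}\oplus_{f(\f{A})}\f{B}$, which lies in $\Kfin$ with $\f{C}\leq \f{D}_0$ by Fact \ref{lem:FullAmalg1}. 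Invoking coherence and Theorem \ref{thm:AnnhilConstruc}, enlarge $\f{D}_0$ to some $\f{D}\in\Kfin$ with $\delta(\f{D})=0$; since $\delta(\f{C})=\delta(\f{D})=0$ and $\f{D}\in\Kfin$, also $\f{C}\leq \f{D}$. Set $\f{M}_{\beta+1} = \f{M}_\beta \oplus_{\f{C}} \f{D}$. By Lemma \ref{lem:ModelsOfUniTh}$(1)$, $\f{M}_{\beta+1}\models\Sfin^\forall$, and the inclusion $\f{B}\hookrightarrow \f{D}_0\hookrightarrow \f{M}_{\beta+1}$ gives the required extension of $f$.

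To see that $\f{M}_{\beta+1}$ remains in $\z{K_0}$, take an arbitrary finite $\f{E}\finsubset \f{M}_{\beta+1}$ and set $\f{E}_M = \f{E}\cap \f{M}_\beta$, $\f{E}_D = \f{E}\cap \f{D}$. Using $\f{M}_\beta \in \z{K_0}$ together with the fact (from the proof of Proposition \ref{prop:justifyZeroSet}) that the join of two rank-zero substructures is rank zero, produce $\f{C}_M$ with $\f{E}_M \cup C\subseteq C_M \finsubset \f{M}_\beta$ and $\delta(\f{C}_M)=0$. Then $\f{C}_M \oplus_{\f{C}} \f{D}$ is a finite substructure of $\f{M}_{\beta+1}$ containing $\f{E}$, with rank $\delta(\f{C}_M) + \delta(\f{D}/\f{C}) = 0$. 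Limit stages are handled by unions, which clearly preserve $\z{K_0}$ since any finite subset lives in some previous stage. Standard bookkeeping over an index set of size $\kappa$ handles every $(\f{A\leq B}, f)$ task that ever appears, and since each successor step adds only finitely many new points, $|N|=\kappa$.

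The principal obstacle is the preservation of $\z{K_0}$ across successor stages: the extension $\f{B}$ itself typically has positive rank over $\f{A}$, so we cannot simply amalgamate $\f{B}$ directly with $\f{M}_\beta$ and stay in $\z{K_0}$. Coherence of $\z\alpha$ is exactly what lets us, via Theorem \ref{thm:AnnhilConstruc}, cap $\f{D}_0$ with a finite rank-zero structure $\f{D}$ before gluing it to $\f{M}_\beta$ over the rank-zero set $\f{C}$; the fact that rank-zero finite sets are automatically strong makes all the free amalgamations well-behaved. For the final clause of the lemma, for any infinite $\kappa$ one produces a seed $\f{M}\in\z{K_0}$ of size $\kappa$ by taking the free join over $\emptyset$ of $\kappa$-many copies of any fixed $\f{D}\in K_0$ (such a $\f{D}$ exists by Theorem \ref{thm:AnnhilConstruc} applied to any $\f{A}\in\Kfin$ with $\delta(\f{A})>0$).
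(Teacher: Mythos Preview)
Your proposal is correct and follows essentially the same strategy as the paper: amalgamate strong extensions over a rank-$0$ finite set, cap the result to rank $0$ using coherence via Theorem~\ref{thm:AnnhilConstruc}, verify the resulting structure stays in $\z{K_0}$, and iterate. The only differences are organizational: the paper uses the intrinsic closure $\icl_{\f{M}_\beta}(f(\f{A}))$ rather than an arbitrary rank-$0$ witness, and structures the iteration as a nested double chain rather than a single chain with bookkeeping; neither change affects the argument.
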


\begin{proof}
	Assume that $|M|=\kappa$. Enumerate the finite substructures of $\f{M}_0=\f{M}$ by $\{\f{B}_0,\ldots\}$. Let $\{\f{B}^{n}_0: n < \omega \}$ enumerate, up to isomorphism $\f{F}\in\Kfin$ such that $\f{B}_0\leq \f{F}$. Now consider $\f{C}_0=\icl_{\f{M}_0}(\f{B}_0)$ which is finite and has rank $0$ as $\f{M}\in\z{K_0}$. Let $\f{C}_1'=\f{C}_0\oplus_{\f{B}_0}\f{B}^{0}_0$. Since $\f{B}_0\leq \f{B}^{0}_{0}$ we have that $\f{C}_1'\in\Kfin$. As $\z\alpha$ is coherent, we can fix $\f{D}_{0}\in\Kfin$ such that $\f{C}_{1}'\subseteq{\f{D}_{0}}$ and $\delta(\f{D}_{0})=0$. Now consider $\f{M}_{1}=\f{M}_0\oplus_{\f{C}_0}\f{D}_{0}$. Note that as $\delta(\f{C}_0)=0$, $\f{C}_0\leq\f{D}_{0}$. By $(1)$ of Lemma \ref{lem:ModelsOfUniTh}, $\f{M}_1\models \Sfin^\forall$ and by $(2)$ of $\f{M}_1$ preserves closures for $\f{M}$.
	
	We claim that $\f{M_1}\in\z{K_0}$. From $(4)$ of Lemma \ref{lem:ModelsOfUniTh}  we obtain that $\f{M}_1$ has finite closures. Let ${H}={G}_1{F}_1$ be a finite substructure of $\f{M}_1$ with ${G}_1\subseteq\f{M}_0$ and ${F}_{1}\subseteq\f{D}_{1}$. Now let ${G}'=\icl_{\f{M}_0}({G}_1)$. Since $\f{M}\in\z{K_0}$, ${G}'$ is finite and $\delta(G')=\delta(\icl_{\f{M}}({G}_1))=0$. Thus it follows that $\icl_{\f{M}_1}({G}_1)={G}'$ as well. Now $\delta({G'}{D}_{1})\leq \delta({G}')+\delta({D}_{1})-e({G'}-{D}_{1},{D}_{1}-{G'})=-e({G'}-{D}_{1},{D}_{1}-{G'})\leq 0$ by using $(1)$ of Fact \ref{lem:MonoRelRankOvBase}. But as we have already established that $\f{M}_1\models\Sfin^\forall$, it follows that $\delta({G}{D}_{1})=0$. Thus any finite substructure of $\f{M}_1$ is contained in a finite substructure with rank $0$. Hence $\f{M}_1\in\z{K_0}$. 
	
	Now as noted above $\icl_{\f{M}_1}(\f{B}_0)=\f{C}_0$. Thus we may recursively form a chain $\langle \f{M}_i \rangle_{i<\omega}$ such that $\f{M}_{n+1}=\f{M}_n\oplus_{\f{C}_n}\f{D}_{n}$ so that $\delta(\f{D}_n)=0$, $\f{B}^{n}_0\subseteq\f{D}_n$, $\f{M}_{n+1}\in\z{K_0}$ and $\icl_{\f{M}_{n+1}}(\f{B}_0)=\f{C}_{n+1}=\f{C}_0$. Now consider $\f{M}^{1}=\bigcup_{i<\omega}\f{M}_n$. Now since $\f{M}_{n}\in\z{K_0}$ for each $n$, it follows immediately that $\f{M}^{1}\in{\z{K_0}}$. Note that $\f{M}^1$ satisfies all the extension formulas demanded by $\Sfin$ for $\f{B}_0$. It is clear that, by using the ideas behind the above construction of $\f{M}^{1}$ and taking unions at limit ordinals, we can build a chain $\f{M}^{\beta}\in\z{K_0}$, $\f{\beta<\kappa}$ such that each $\f{M}^{\beta}\in\z{K_0}$ and for all $\gamma<\beta$, $\f{M}^{\beta}$ contains all finite extensions of $\f{B}_{\gamma}$ needed to satisfy the extensions dictated by $\Sfin$. Now clearly $\f{M}^{\kappa}\in\z{K_0}$ and all finite substructures of $\f{M}$ have the extensions needed to satisfy the extensions dictated by $\Sfin$ \textit{in} $\f{M}^\kappa=\f{N}_0$. Now repeating this procedure we may form a $\subseteq$-chain $\langle \f{N}_\beta \rangle$ (taking unions at limit stages) where $\f{N}=\bigcup_{\beta<\kappa}\f{N}_\beta$
	satisfies $\f{N}\in\z{K_0}$ and $\f{N}\models\Sfin$. 
	
	Since there are $\f{M}\in \z{K_0}$ with $|M|=\kappa_0$ for all infinite cardinals $\kappa$ (for example, the free join over $\emptyset$ of all the elements of $K_0$ up to isomorphism, each repeated $\kappa$ many times in the free join) there are atomic models of size $\kappa$.      
\end{proof}

%\begin{remark}
%Note that in the construction of an atomic model as in Lemma \ref{lem:ArbAtomicModels}, it is important that we start with a structure $\f{M}\in \z{K_0}$.	
%\end{remark}

We now obtain the following:

\begin{thm}\label{thm:ExistenceOfAtomic}
	There exists atomic models of the theory $\Sfin$ if and only if $\z{\alpha}$ is coherent.
\end{thm}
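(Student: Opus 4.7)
The plan is to prove the two directions separately. The backward implication (coherence implies existence) is already established: if $\z\alpha$ is coherent, then $\z K_0$ is nonempty in every infinite cardinality (take a free join of sufficiently many representatives of $K_0$, which is nonempty by Theorem \ref{thm:AnnhilConstruc}), and Lemma \ref{lem:ArbAtomicModels} then produces atomic models of $\Sfin$ of every infinite size.

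For the forward implication, I would start from an atomic model $\f{M}\models\Sfin$ and exploit characterization (4) of Theorem \ref{thm:ClassAtModMain}: every finite subset of $\f{M}$ is contained in a finite substructure of rank $0$. The issue is that applying this to an arbitrary finite $\f{A}\subseteq\f{M}$ only yields a rational linear combination of the $\z\alpha_E$ equal to an integer, but coherence demands that every coefficient $m_E$ be strictly positive. To force this, I would first manufacture a finite $\f{A}$ that involves every relation symbol at least once.

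Concretely, for each $E\in L$ let $\f{A}_E\in\Kfin$ have exactly $ar(E)$ points carrying a single $E$-relation (and nothing else). Then $\delta(\f{A}_E)=ar(E)-\z\alpha_E>0$ since $ar(E)\geq 2$ and $\z\alpha_E\leq 1$, and every proper substructure of $\f{A}_E$ is relation-free. Form the free join $\f{A}=\bigoplus_{E\in L}\f{A}_E$ over $\emptyset$; by Fact \ref{lem:BasicDel3}(4) we have $\emptyset\leq\f{A}$ and $\f{A}\in\Kfin$. Using the extension axioms of $\Sfin$, embed $\f{A}$ strongly into $\f{M}$ and identify it with its image. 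Apply Theorem \ref{thm:ClassAtModMain}(4) to obtain a finite $\f{B}$ with $\f{A}\subseteq\f{B}\subseteq\f{M}$ and $\delta(\f{B})=0$.

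Setting $m_E:=N_E(\f{B})$ for each $E\in L$, the equation $\delta(\f{B})=|B|-\sum_{E\in L}m_E\z\alpha_E=0$ gives $\sum_{E\in L}m_E\z\alpha_E=|B|\in\mathbb{Q}$. Because $\f{A}\subseteq\f{B}$ and $\f{A}$ contains at least one $E$-relation for every $E\in L$, we have $m_E\geq 1$ for all $E$, witnessing coherence of $\z\alpha$. The main (small) obstacle is precisely this positivity clause, which is the reason for prefixing the argument with the free join $\bigoplus_E\f{A}_E$ rather than invoking Theorem \ref{thm:ClassAtModMain}(4) on an arbitrary finite subset; everything else is assembled directly from results already established.
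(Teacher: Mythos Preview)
Your proposal is correct and follows essentially the same route as the paper's proof: both directions are handled identically, with the backward direction deferred to Lemma \ref{lem:ArbAtomicModels} and the forward direction obtained by freely joining one instance of each relation symbol, embedding the result into an atomic model, and invoking Theorem \ref{thm:ClassAtModMain}(4) to find a rank-zero superstructure whose relation counts witness coherence. Your write-up is in fact slightly more careful than the paper's in justifying why $\f{A}$ can be taken inside $\f{M}$ and why each $m_E$ is strictly positive.
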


\begin{proof}
	We begin by showing that if $\Sfin$ has atomic models, then $\z\alpha$ is coherent. To see this for each $E\in L$, fix a finite $L$ structure $\f{A}_{E}$ such that at $E$ holds on at least one subset of $\f{A}_E$ and no other relation holds on $\f{A}_E$. Let $\f{A}=\oplus_{E\in L}\f{A}_E$ be the free join of the ${\f{A}_E}$ over $\emptyset$. Let $\f{M}\models\Sfin$ be atomic with $A\subseteq M$. Thus there is some $\f{B\supseteq{A}}$ with ${B\finsubset M}$ and $\delta(\f{B})=0$. It follows that $\delta(\f{B})=0=n-\sum_{E\in L}{m_E\z\alpha_E}$. Thus $\z\alpha$ is coherent.
	
	The converse is immediate by Lemma \ref{lem:ArbAtomicModels}.  
\end{proof}

\begin{remark}\label{rmk:NonExistenceOfPrimeModShSpencer}
	The Shelah-Spencer almost sure theories do not have atomic models.  
\end{remark}

In the case that $\z\alpha$ is rational, an even stronger result than Theorem \ref{thm:ExistenceOfAtomic} is possible. In this case the models of $\Sfin$ displays similar behavior to that of classical Fra\"{i}ss\'{e} limits (i.e. theories of generics built from Fra\"{i}ss\'{e} classes where $\leq$ corresponds to $\subseteq$).   

\begin{lemma}\label{lem:Atom=ECForRW}
	Assume that $\z\alpha$ is rational. Let $\f{M}\models{\Sfin}$. Now $\f{M}$ is atomic if and only if $\f{M}$ is an e.c. model. Hence every model of $\Sfin$ embeds isomorphically into an atomic model of $\Sfin$. 
\end{lemma}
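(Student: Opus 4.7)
My plan for the forward direction of the equivalence is to leverage Theorem \ref{thm:ClassAtModMain}, which reduces the question to showing that every e.c. model $\f{M} \models \Sfin$ has finite closures when $\z\alpha$ is rational. The converse (atomic $\Rightarrow$ e.c.) is immediate from the same theorem and requires no hypothesis on $\z\alpha$. So the work is to establish finite closures for e.c. models in the rational setting.

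To show $\icl_\f{M}(A)$ is finite for every $A \finsubset M$, I would analyze the natural iterative construction of $\icl_\f{M}(A)$ and argue that it terminates. Writing $\z\alpha_E = p_E/q_E$ in lowest terms and $c = \textrm{lcm}(q_E)$, every finite $\delta$-value lies in $\frac{1}{c}\mathbb{Z}$. Starting from $A_0 = A$, whenever $A_i$ fails to be closed in $\f{M}$, pick a minimal pair $(\f{B},\f{C})$ with $\f{B}\subseteq A_i$, $\f{C}\subseteq M$, $\f{C}\not\subseteq A_i$, and set $A_{i+1} = A_i\f{C}$. Letting $\f{B}' = A_i \cap \f{C}$, the inclusions $\f{B} \subseteq \f{B}' \subsetneq \f{C}$ combined with the minimal pair property give $\f{B}\leq \f{B}'$, hence $\delta(\f{C}/\f{B}') \leq \delta(\f{C}/\f{B}) < 0$. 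Together with Fact \ref{lem:MonoRelRankOvBase}(2) this yields $\delta(A_{i+1}/A_i) \leq \delta(\f{C}/\f{B}) < 0$. Rationality is exactly what upgrades this strict inequality to $\delta(A_{i+1}/A_i) \leq -1/c$. Telescoping, together with $\delta(A_n) \geq 0$ from $A_n \in \Kfin$, bounds $n \leq c \cdot \delta(A)$. Since each $A_{i+1} - A_i$ is finite, $\icl_\f{M}(A)$ is finite, and Theorem \ref{thm:ClassAtModMain} completes this direction.

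For the concluding ``hence'' clause, I would invoke the standard model-theoretic fact that any model of a $\forall\exists$-axiomatizable theory embeds into an existentially closed model of the theory, via the usual union-of-chain construction in which consistent existential formulas are successively realized. Since $\Sfin$ is $\forall\exists$-axiomatizable (Remark \ref{rmk:SfinIsAEAxiomatizable}), any $\f{M}\models \Sfin$ embeds in some e.c.\ $\f{N}\models\Sfin$, and the equivalence just established makes $\f{N}$ atomic.

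The only genuine obstacle is the rank-drop step in the closure argument: the inequality $\delta(A_{i+1}/A_i) < 0$ furnished by the minimal pair is, on its own, too soft to force termination; rationality is precisely what promotes it to the uniform bound $-1/c$ and hence to finiteness of $\icl_\f{M}(A)$. The rest is a combination of Theorem \ref{thm:ClassAtModMain} with standard facts about $\forall\exists$-axiomatizable theories.
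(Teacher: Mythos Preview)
Your proposal is correct and follows essentially the same approach as the paper: both arguments reduce to Theorem \ref{thm:ClassAtModMain} plus the standard fact that every model of a $\forall\exists$-theory embeds into an e.c.\ model, with the key ingredient being that rational $\z\alpha$ forces every model of $\Sfin$ to have finite closures. The paper simply asserts this finite-closures fact without proof (cf.\ Remark \ref{rmk:ExtensionsOfUniTh}), whereas you supply the explicit rank-drop argument; note in passing that your argument actually shows \emph{every} model of $\Sfin^\forall$ has finite closures, not just e.c.\ ones, which is exactly what the paper uses.
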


\begin{proof}
	Assume that $\z\alpha$ is rational and that $\f{M}\models\Sfin$ and note that if $\z\alpha_{E}$ is rational, then $\Sfin$ has finite closures. By Theorem \ref{thm:ClassAtModMain} we immediately obtain that $\f{M}$ is atomic if and only if $\f{M}$ is an e.c. model. Further, well known results about $\forall\exists$-theories tell us that every model of $\Sfin$ sits as a \textit{substructure} of an e.c. model of $\Sfin$ of the same cardinality. Since \textit{any} model of $\Sfin$ has finite closures, it follows that every model embeds \textit{isomorphically} into an atomic model.
\end{proof}

\begin{remark}\label{rmk:ExtensionsOfUniTh}
	Assume that $\z\alpha$ is rational. It is easily seen that any $\f{X}\models\Sfin^\forall$ has finite closures. Thus it follows from Lemma \ref{lem:CompletionsOfUniTh1} that any $\f{X}\models\Sfin^\forall$ embeds isomorphically into some $\f{N}\models\Sfin$ (taking the free join of $\aleph_0$ many non-isomorphic copies of $\f{X}$ over $\emptyset$ if $\f{X}$ is finite). Thus from Lemma \ref{lem:Atom=ECForRW}, it follows that given any $\f{X}\models\Sfin^\forall$, we see that there is an atomic $\f{N}'\models\Sfin$ such that $\f{X}$ embeds isomorphically into $\f{N}'$        
\end{remark}

We will now explore the behavior of atomic models when $\z\alpha$ is coherent but $\z\alpha$ is not  rational. We begin by showing that \textit{any} countable $\f{X}\models\Sfin^\forall$ with finite closures embeds isomorphically into the countable atomic model of $\Sfin$ mimicking the behavior of Remark \ref{rmk:ExtensionsOfUniTh}. Recall that if $\f{X}\in \z{K_0}$, then $\f{X}$ has finite closures.

\begin{lemma}\label{lem:SlayingTheHydra}
	Let $\z\alpha$ be coherent and let $\f{M}\models\Sfin^\forall$ be countable with finite closures. Then
	\begin{enumerate}
		\item There exists a countable $\f{M}^*\in\z{K_0}$ with $\f{M^*}\supseteq{\f{M}}$.
		\item There exists a countable atomic $\f{N}\models\Sfin$ such that $\f{M\subseteq N}$. 
	\end{enumerate}
\end{lemma}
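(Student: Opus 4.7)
The plan is to prove (1) by a chain construction that uses Theorem \ref{thm:AnnhilConstruc} to repeatedly kill off the positive rank of intrinsic closures, and then to derive (2) as an immediate application of Lemma \ref{lem:ArbAtomicModels} to the structure built in (1).

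For (1), I would construct a $\subseteq$-chain $\f{M} = \f{M}_0 \subseteq \f{M}_1 \subseteq \cdots$ of countable models of $\Sfin^\forall$, each with finite closures, together with a bookkeeping enumeration of pairs $(i,\f{B})$ with $\f{B} \finsubset \f{M}_i$ so that every finite substructure that ever appears in any $\f{M}_i$ is addressed at some later stage. At stage $i$, given the target $\f{B}_i$, set $\f{C}_i = \icl_{\f{M}_i}(\f{B}_i)$, which is finite by the inductive finite-closures hypothesis. The argument behind Remark \ref{rmk:Closed=StrongForFinite} adapts verbatim to finite substructures of structures in $\z{K_L}$ and yields $\f{C}_i \leq \f{M}_i$. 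Using coherence of $\z\alpha$, Theorem \ref{thm:AnnhilConstruc} produces $\f{D}_i \in \Kfin$ with $\f{C}_i \subseteq \f{D}_i$ and $\delta(\f{D}_i) = 0$ (the case $\delta(\f{C}_i) = 0$ is trivial: take $\f{D}_i = \f{C}_i$). Define $\f{M}_{i+1} = \f{M}_i \oplus_{\f{C}_i} \f{D}_i$; since $\f{C}_i \leq \f{M}_i$, parts (1) and (4) of Lemma \ref{lem:ModelsOfUniTh} respectively guarantee that $\f{M}_{i+1} \models \Sfin^\forall$ and that $\f{M}_{i+1}$ still has finite closures, so the induction continues. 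Let $\f{M}^* = \bigcup_{i<\omega} \f{M}_i$. Any finite $\f{B} \finsubset \f{M}^*$ lies in some $\f{M}_j$; by the bookkeeping there is an $i \geq j$ at which $\f{B}$ is handled, and then $\f{B} \subseteq \f{C}_i \subseteq \f{D}_i$ with $\delta(\f{D}_i) = 0$. Hence $\f{M}^*$ is a countable model of $\Sfin^\forall$ lying in $\z{K_0}$, which proves (1).

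For (2), the countable structure $\f{M}^* \in \z{K_0}$ produced in (1) is exactly the input required by Lemma \ref{lem:ArbAtomicModels}; applying that lemma with $\kappa = \aleph_0$ delivers a countable atomic $\f{N} \models \Sfin$ with $\f{M} \subseteq \f{M}^* \subseteq \f{N}$. The main obstacle is verifying that the successive amalgamations in (1) preserve both $\Sfin^\forall$ and finite closures; this is exactly what Lemma \ref{lem:ModelsOfUniTh} is designed for, and the trick that unlocks both clauses of that lemma is to amalgamate along the intrinsic closure $\f{C}_i$ rather than along $\f{B}_i$ itself, since this supplies the $\f{C}_i \leq \f{M}_i$ hypothesis for free.
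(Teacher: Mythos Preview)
Your argument for (1) is essentially the paper's: both build a chain $\f{M}=\f{M}_0\subseteq\f{M}_1\subseteq\cdots$ by amalgamating a rank-zero extension (supplied by Theorem \ref{thm:AnnhilConstruc}) over the intrinsic closure of a finite piece, invoking Lemma \ref{lem:ModelsOfUniTh} at each step. The paper organizes the bookkeeping slightly differently---it fixes a strong chain $\f{A}_0\leq\f{A}_1\leq\cdots$ exhausting $\f{M}$ and at stage $n$ closes off $A_n\cup(M_n-M_0)$, which keeps $|M_n-M_0|$ finite and avoids dynamic dovetailing---but the content is the same.

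For (2) you take a genuinely shorter route. The paper does an alternating chain: at odd stages it passes to a model of $\Sfin$ with finite closures via Lemma \ref{lem:CompletionsOfUniTh1}, and at even stages it pushes back into $\z{K_0}$ via part (1), taking the union at the end. You instead observe that the $\f{M}^*\in\z{K_0}$ produced in (1) is exactly the hypothesis of Lemma \ref{lem:ArbAtomicModels}, which already delivers an atomic $\f{N}\models\Sfin$ over $\f{M}^*$ in one stroke. Your route is cleaner and entirely correct; the paper's alternating chain is self-contained (it re-derives the effect of Lemma \ref{lem:ArbAtomicModels} from Lemma \ref{lem:CompletionsOfUniTh1} plus (1)) but redundant once that lemma is on the books.
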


\begin{proof}
	$(1)$: Since $\f{M}$ has finite closures, we may write $\f{M}=\bigcup_{i<\omega}\f{A}_i$ where $\f{A}_i\leq\f{A}_{i+1}$ for each $i<\omega$. We will now construct $\f{M^*}$ as the union of a countable $\subseteq$-chain $\f{M}_0\subseteq\f{M}_1\subseteq...$ with $\f{M}=\f{M}_0$ and $|M_{n}-M_0|$ finite for all $n<\omega$ as follows: Let $\f{M}_0=\f{M}$ and given $\f{M}_n$, let $\f{A}_n^*=\icl_{\f{M}_n}(A_n\cup{(M_n-M_0)})$. Using Theorem \ref{thm:AnnhilConstruc} choose $\f{B}_n\in\Kfin$ with $\f{A}_n^*\subseteq \f{B}_n^*$ and $\delta(\f{B}_n)=0$. Let $\f{M}_{n+1}=\f{M}_n\oplus_{\f{A}_n^*}\f{B}_n$. As $\f{A}_n^*\leq \f{M^*}$, it follows from Lemma \ref{lem:ModelsOfUniTh} that each $\f{M}_n\models\Sfin^\forall$. Clearly $|{M}_n-M_0|$ is finite as claimed. As each $\f{M}_n\models\Sfin^\forall$, $\f{M^*}\models\Sfin^\forall$ where $\f{M^*}=\bigcup_{i<\omega}\f{M}_n$. Note that given \textit{any} finite set of $\f{A}\subseteq \f{M^*}$, there is some $n<\omega$ such that $\f{A}\subseteq \f{M}_{n}$. By construction, it follows that there is some $k<\omega$ such that $\f{A}\subseteq\f{B}\subseteq\f{M}_{n+k}$ with $\f{B}$ finite and $\delta(\f{B})=0$. Thus it follows that $\f{M^*}\in\z{K_0}$.\\ 
	
	\noindent$(2)$: We now do an alternating chain argument: We let $\f{M}^*_0=\f{M}$. Thus $\f{M}_0^*$ has finite closures. We build $\f{M}^*_{2n+1}\models\Sfin$ with $\f{M}^*_{2n}\subseteq\f{M}^*_{2n+1}$ such that $\f{M}^*_{2n+1}$ has finite closures, preserves closures for $\f{M}^*_{2n}$ and is countable by use of Lemma \ref{lem:CompletionsOfUniTh1}. We let $\f{M}^*_{2n+2}$ be such that $\f{M}^*_{2n+1}\subseteq\f{M}^*_{2n+2}$ and $\f{M}^*_{2n+2}\in\z{K_0}$ which exists by use of $(1)$. We let $\f{N}=\bigcup_{n<\omega}\f{M}^*_{n}$. Let $\f{B}\finsubset\f{N}$. Now  as $\f{B}\subseteq \f{M}^*_{2n_0+1}$ for some $n_0$, a routine argument shows that $\f{N}\models{\Sfin}$. As $\f{B}\subseteq \f{M}^*_{2n_0+1} \subseteq \f{M}^*_{2n_0+2}$ it follows that  $\f{D} = icl_{\f{M}_{2n_0+2}}(\f{B})$ is finite and $\delta(\f{D})=0$. Thus it follows that $icl_{\f{M}_{2n_0+2}}(\f{B})=icl_{\f{N}}(\f{B})$ and hence  $\f{N}\in\z{K_0}$. Thus $\f{N}$ is (up to isomorphism), the unique countable atomic model of $\Sfin$ by Theorem \ref{thm:ClassAtModMain}.
\end{proof} 

We now proceed to show that this behavior may fail for arbitrary $\f{X}\models\Sfin^\forall$. 

\begin{defn}\label{defn:tent-like}
	Call a structure  $\f{N}\models\Sfin^\forall$ \textit{tent-like} over $\f{M}$ if \begin{enumerate}
		\item  $M$ is a set of points with no relations between them 
		\item  For all pairs $\{a,b\}$ of distinct elements from $M$, there is a unique minimal pair $
		(\{a,b\}, \f{F}_{a,b})$ in $\f{N}$. 
		\item  $N= \bigcup_{a,b\in{A}, a\neq b} F_{(a,b)}$  
		\begin{enumerate}
			\item For distinct $a,b,b'\in{M}$, $\f{F}_{a,b}, \f{F}_{a,b'}$ are freely joined over $a$
			\item For distinct $a,a',b,b'\in{M}$,  $\f{F}_{a,b}, \f{F}_{a,b'}$ are freely joined over $\emptyset$
		\end{enumerate}
		\item $\icl_{\f{N}}(\{a\})=\{a\}$ for each $a\in M$
	\end{enumerate}
	We will refer to $M$ as the \textit{base} of the tent $\f{N}$ over $\f{M}$.
\end{defn}

\begin{remark}\label{rmk:tentsHaveFiniteClosures}
	Note that given a finite subset $A_0 = \{a_{n_1},\ldots,a_{n_k}\}$ of $M$ we have that ${A}'=\bigcup \f{F}_{a,b}\subseteq{icl_{\f{N}}({A}_0)}$ where $(a,b)$ ranges through distinct pairs from $A_0$. We claim that this set is closed. Assume to the contrary that there is a minimal pair $({D},{DG})$ where ${D}\subseteq{{A}'}$ and $G$ is disjoint from $A'$. Note $\delta({G/D})\geq{\delta({G/A'})}$ using $(2)$ of Fact \ref{lem:MonoRelRankOvBase}. Since $\f{N}$ is tent-like over $\f{M}$, $\delta({G/A'})=\delta({G/A_0})$. From the tent-likeness of $\f{N}$ over $\f{M}$ and our choice of $A'$ and $G$, it follows that $\delta({G/A_0}) = \sum_{(a,b)\notin {A_0\times A_0}, a\neq b} \delta({G}\cap{F}_{a,b}/{A}_0)$. Thus 
	$\delta({G}\cap{F}_{a,b}/{A}_0)$ for $(a,b)\notin {A_0\times A_0}, a\neq b$ reduces to either $\delta({G}\cap{F}_{a,b})$ or $\delta({G}\cap{F}_{a,b}/c)$ where $c=a$ or $c=b$. But since each $a'\in{A}$ is its own closure in $\f{N}$ it follows that the $\delta({G}\cap{F}_{a,b}/c)\geq{0}$. Thus it follows that ${A}'$ is closed. Now by noting that each finite subset lies in finitely many of the ${F}_{a,b}$ it follows that $\f{N}$ has finite closures.    %$\delta({G/A'})=\delta({G})-e({G},{A}')=\delta({G})-e({G},{A}_0)= \sum \delta({G}\cap{F}_{a,b}/{A}_0) \geq{0}$
\end{remark}

\begin{remark}\label{rmk:TentsNotInKZero}
	Note that if $\f{N}\models\Sfin^\forall$ is tent like over $\f{M}$, then $\f{N}\notin\z{K_0}$ as $\delta(\icl(a))=1$ for each $a\in M$.
\end{remark}

\begin{lemma}\label{lem:constTent}
	Let $\z\alpha$ be coherent but not rational. Suppose $\f{N}\models\Sfin^\forall$ tent-like over $\f{M}$ where $M$ is countable.  Then there is an extension $\f{N}^*$ of $\f{N}$ over $\f{M}^*$ where $\f{M}\subseteq \f{M}^*$ and $M^*$ has universe $M\{a^*\}$, where $a^*$ is a single new point such that $\f{N}^*$ is tent-like over $\f{M}^*$. Thus there is some $\f{N'}$ where the corresponding base $\f{M'}$ has $|M'|=\aleph_1$.   
\end{lemma}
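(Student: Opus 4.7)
The plan is to form $\f{N}^*$ by adjoining a fresh base point $a^*$ and, for each $b \in M$, a new panel $F^*_{a^*, b}$ realizing an essential minimal pair over $\{a^*, b\}$, then freely amalgamating along the tent-like gluing pattern. Enumerate $M = \{b_n : n<\omega\}$. Since $\z\alpha$ is coherent but not rational, Theorem \ref{thm:OmitPrelim2}(1) supplies, for each $n$, an essential minimal pair $(\{a^*, b_n\}, F^*_{a^*, b_n}) \in \Kfin$ with rank drop $\gamma_n := -\delta(F^*_{a^*, b_n}/\{a^*, b_n\}) > 0$ of arbitrarily small size. I will pick the $\gamma_n$ to decay very rapidly (for instance $\gamma_n < 2^{-n-1}$) so that every finite sum of new $\gamma$'s, and in particular $\sum_n \gamma_n$, is strictly less than $1$.

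Define $\f{N}^*$ to have universe $N \cup \{a^*\} \cup \bigcup_n (F^*_{a^*, b_n} \setminus \{a^*, b_n\})$ with the prescribed intersections $F^*_{a^*, b_n} \cap N = \{b_n\}$ and $F^*_{a^*, b_n} \cap F^*_{a^*, b_m} = \{a^*\}$ for $n \neq m$, and no relations beyond those already in $N$ and in the individual panels. Clauses (1)--(3) of Definition \ref{defn:tent-like} over $M^* = M \cup \{a^*\}$ are immediate from the construction. The substance is to verify that $\f{N}^* \models \Sfin^\forall$, and that clause (4) (equivalently $\{a\}\leq \f{N}^*$ for every $a\in M^*$) as well as uniqueness of the minimal pair over each $\{a^*,b_n\}$ hold.

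The main computation, for any finite $G \subseteq N^*$, is an iterated application of Fact \ref{lem:BasicDel3}(5) along the free-join structure: decompose $\delta(G)$ as $|G \cap M^*|$ plus a sum of contributions, one per panel (old $F_{a,b}$ or new $F^*_{a^*, b_n}$) meeting $G$. Because every panel is an essential minimal pair over its two-point base, partial intersections contribute nonnegatively to $\delta(G)$, so only panels \emph{fully contained} in $G$ contribute negatively, each by its own $-\gamma$. Tent-likeness of $\f{N}$ already enforces $\sum_{\text{whole old panels on }B}\gamma_{(a,b)} \leq |B|-1$ for every finite $B \subseteq M$ (this is the $\{a\}\leq \f{N}$ inequality applied to $X=B\cup\bigcup F_{a,b}$), and the rapid-decay choice of the $\gamma_n$ bounds the contribution of whole new panels by $\sum_n \gamma_n < 1$. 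Combining these bounds along the free-join decomposition yields $\delta(G) \geq 1$ whenever $G\cap M^*\neq \emptyset$ and $\delta(G)\geq 0$ in general, proving $\f{N}^*\models\Sfin^\forall$ and $\{a\}\leq \f{N}^*$ for each $a\in M^*$. The same bookkeeping applied to the stronger inequality $\delta(G)\geq \delta(F^*_{a^*,b_n})$ for $G\supseteq F^*_{a^*,b_n}$, together with the analogue of the Remark \ref{rmk:tentsHaveFiniteClosures} argument (which uses only essential-minimal-pairness of all panels and the free-join structure), gives that $F^*_{a^*,b_n}$ is closed in $\f{N}^*$ and hence is the unique minimal pair over $\{a^*,b_n\}$.

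For the $\aleph_1$-sized tent $\f{N}'$, iterate the single-point construction transfinitely: build $\langle \f{N}_\beta : \beta \leq \omega_1 \rangle$ with $\f{N}_0 = \f{N}$, $\f{N}_{\beta+1}$ produced by the single-point extension applied to $\f{N}_\beta$ (valid because the base $M_\beta$ remains countable at every stage $\beta<\omega_1$), and $\f{N}_\gamma = \bigcup_{\beta<\gamma}\f{N}_\beta$ at limits $\gamma$. At a limit, every finite substructure of $\f{N}_\gamma$ lies in some $\f{N}_\beta$, so tent-likeness is inherited from the $\f{N}_\beta$. The main obstacle throughout is the delicate $\gamma$-bookkeeping at each successor stage: the newly introduced rank drops must be chosen small enough that all the sum inequalities demanded by tent-likeness of $\f{N}_{\beta+1}$ remain in force, given the (possibly large) sums already accumulated in $\f{N}_\beta$. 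This is precisely where the non-rationality of $\z\alpha$ is essential, since Theorem \ref{thm:OmitPrelim2}(1) permits essential minimal pairs of arbitrarily small positive drop. The resulting $\f{N}_{\omega_1}$ is tent-like over $M_{\omega_1}$ with $|M_{\omega_1}| = \aleph_1$, yielding the desired $\f{N}'$.
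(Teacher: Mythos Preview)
Your proposal is correct and follows essentially the same approach as the paper: both choose essential minimal pairs over $\{a^*,b_n\}$ with rank drops bounded by $2^{-n-1}$ via Theorem~\ref{thm:OmitPrelim2}(1), freely join the new panels along the tent-like pattern, verify $\f{N}^*\models\Sfin^\forall$ and clause~(4) by decomposing $\delta(G)$ over panel contributions (using that partial panel intersections contribute nonnegatively by essentiality, while whole-panel drops sum to less than~$1$), and then iterate $\omega_1$ times. Your phrasing of the old-panel bound as $\sum\gamma_{(a,b)}\leq |B|-1$ and your systematic use of Fact~\ref{fact:delMain}(5) are slightly more streamlined than the paper's case-by-case computation, but the content is the same.
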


\begin{proof}
	Enumerate $M=\{a_n:n\in\omega\}$. Fix $E\in L$ such that $\z\alpha_E$ is irrational. Now for each $n\in \omega$ we may choose an essential minimal pair $\f{F}_{(a_n,a^*)}$ over $\{a_n,a^*\}$ such that $-1/2^{n+1} < \delta(\f{F}_{(a_n,a^*)}/\{a_n,a^*\}) < 0$ using Theorem \ref{lem:InfMinPa}. Let ${D'\subseteq F_{a_n,a^*}}$. Now if $D'\cap{\{a_n,a^*\}}$ contains exactly one element, then $\delta(D'/D\cap\{a_n,a^*\})\geq 0$. So suppose that $D'\cap{\{a_n,a^*\}}={\{a_n,a^*\}}$. Since $\delta(\{a_n,a^*\}/\{a^*\})=\delta(\{a_n,a^*\}/\{a_n\})=1$ and $\delta({D'/\{c\}})= \delta({D'/\{a_n,a^*\}})+\delta(\{a_n,a^*\}/c) \geq -1/2^{n+1}+1\geq 0$ where $c=a_n$ or $c=a^*$ it follows that $\{a_n\},\{a^*\}\leq \f{F}_{a_n,a^*}$. Now consider the structure $\f{N}^*$ with universe $N\cup\{a^*\}\cup\bigcup_{a_n\in{A}}{F_{a^*,a_n}}$ with

	\begin{enumerate}
		\item For distinct $a,b,b'\in{Ma^*}$, $\f{F}_{a,b}, \f{F}_{a,b'}$ are freely joined over $a$
		\item For distinct $a,a',b,b'\in{Ma^*}$,  $\f{F}_{a,b}, \f{F}_{a,b'}$ are freely joined over $\emptyset$
	\end{enumerate}

	Clearly  $M\{a^*\}$ is a set of points with no relations between them. Note that we have shown that $\{a^*\},\{a_n\}\leq \f{F}_{a_n,a^*}$. Let $\f{G\finsubset{N^*}}$. Suppose that the $G\cap{M\{a^*\}}=\emptyset$. Then because of the conditions regarding free joins we see that $\delta(\f{G})=\sum \delta(\f{F}_{a,b}\cap\f{G})\geq{0}$. Now consider the case $G\cap{M\{a^*\}}\neq\emptyset$. Put $G'=G\cap{A\{a^*\}}$. Now $\delta(\f{G/G'})=\delta(\f{(N\cap{G})/G'})+\delta(\f{(N^*-N)\cap{G}/G'})=\delta(\f{(N\cap{G})/\f{N\cap{G'}}})+\delta(\f{(N^*-N)\cap{G}/G'})$ where $\delta(\f{(N\cap{G})/G'})=\delta(\f{(N\cap{G})/\f{N\cap{G'}}})$  follows by considering the fact that the underlying finite structures are freely joined. Now $\delta(\f{G})=\delta(\f{G'})+\delta(\f{(N\cap{G})/\f{N\cap{G'}}})+\delta(\f{(N^*-N)\cap{G}/G')}$. Suppose that $a^*\notin{G'}$. Then $\delta(\f{G'})+\delta(\f{(N\cap{G})/\f{N\cap{G'}}})=\delta(\f{(N\cap{G})})$ and $\delta(\f{(N^*-N)\cap{G}/G'})\geq{0}$ by using an argument similar to that in Remark \ref{rmk:tentsHaveFiniteClosures}. So assume that $a^*\in{G'}$. Now $\delta(\f{G})=\delta(\f{G'\cap{N}})+\delta(\f{(N\cap{G})/\f{N\cap{G'}}})+\delta(a^*)+\delta(\f{(N^*-N)\cap{G}/G')}$. It follows that $\delta(\f{G'\cap{N}})+ \delta(\f{(N\cap{G})/\f{N\cap{G'}}})\geq{0}$ by an argument similar to the above. But by construction of the new minimal pairs $\delta(a^*)+\delta(\f{(N^*-N)\cap{G}/G')}\geq{1-\sum 1/{2^{n+1}}}\geq{0}$. Thus $\f{N^*}\models\Sfin^\forall$.  
	
	Now each pair of points $\{a,b\}$ from $M\{a^*\}$ has a minimal pair over it; i.e. $(ab,\f{F}_{a,b})$ is a minimal pair. Now consider  $\f{F}_{a,b}$. Note that since $a\leq\f{F}_{a,b'}$  and $b\leq\f{F}_{b,b'}$ and using the various properties regarding how the $\f{F}_{c,d}$ are freely joined and arguing in a similar manner to Remark \ref{rmk:tentsHaveFiniteClosures} yields that $\f{F}_{a,b}$ is closed in $\f{N^*}$ which establishes that there is a unique  minimal pair over $ab$. Now it also follows that for any $a\in{M\{a^*\}}$ the closure of $a$ is itself. Thus $\f{N^*}$ is also tent-like.
	
	By iterating this  $\omega_1$ many times we obtain a tent-like structure where the corresponding $\f{N}'$ over $\f{M}'$ where $|M'|=\aleph_1$. 
\end{proof}

\begin{lemma}\label{lem:CoherenceWithIrrational}
	Let $\z\alpha$ be coherent but not rational. Then there is $\f{X}\models\Sfin^\forall$ of size $\aleph_1$ such that $\f{X}$ has finite closures but there is no atomic model $\f{N}$ of $\Sfin$ such that $\f{N\supseteq{X}}$. Thus there is $\f{M}\models\Sfin$ such that $\f{M}$ does not embed isomorphically into any atomic model of $\Sfin$. 
\end{lemma}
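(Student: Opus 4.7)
The plan is to use the tent construction of Lemma \ref{lem:constTent} iterated along $\omega_1$ to build the required $\f{X}$, and then derive a contradiction from any hypothetical embedding into an atomic model by extracting an impossible strictly decreasing sequence of positive reals of order type $\omega_1$.

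First I would build $\f{X}$ tent-like over $M'=\{a_\alpha:\alpha<\omega_1\}$ by transfinite iteration of Lemma \ref{lem:constTent}. At stage $\gamma+1$, given the tent over $\{a_\alpha:\alpha<\gamma\}$, add $a_\gamma$ and, for each $\alpha<\gamma$, an essential minimal pair $(\{a_\alpha,a_\gamma\},\f{F}_{a_\alpha,a_\gamma})$ with drop $r_{\alpha,\gamma}:=-\delta(\f{F}_{a_\alpha,a_\gamma}/\{a_\alpha,a_\gamma\})$ positive and bounded above by $1/2^{\max(\alpha,\gamma)+2}$; such pairs exist by Theorem \ref{lem:InfMinPa} since $\z\alpha$ is not rational. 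This bound makes the rank computation carried out in the proof of Lemma \ref{lem:constTent} go through uniformly at every finite substructure, ensuring $\f{X}\models\Sfin^\forall$ and that $\{a_\alpha\}\leq\f{X}$ for every $\alpha$. Take unions at limits. By Remark \ref{rmk:tentsHaveFiniteClosures}, $\f{X}$ has finite closures.

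Suppose for contradiction $\f{X}\subseteq\f{N}$ for an atomic $\f{N}\models\Sfin$. By Theorem \ref{thm:ClassAtModMain}, for each $\alpha<\omega_1$ the sets $C_\alpha:=\icl_\f{N}(\{a_\alpha\})$ and $K_{\alpha,\beta}:=\icl_\f{N}(\{a_\alpha,a_\beta\})$ are finite and of rank $0$, and $C_\alpha\cup C_\beta\cup\f{F}_{a_\alpha,a_\beta}\subseteq K_{\alpha,\beta}$. Applying Fact \ref{lem:BasicDel1}(1) together with $\delta(C_\alpha)=\delta(C_\beta)=0$ and $\delta(C_\alpha\cup C_\beta)\geq 0$ (membership in $\Kfin$), one obtains $\delta(C_\alpha\cap C_\beta)=0$ and no relations between $C_\alpha\setminus C_\beta$ and $C_\beta\setminus C_\alpha$. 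A parallel computation shows that if $\f{F}_{a_\alpha,a_\beta}\setminus\{a_\alpha,a_\beta\}$ were disjoint from $C_\alpha\cup C_\beta$, then $\delta(C_\alpha\cup C_\beta\cup\f{F}_{a_\alpha,a_\beta})\leq -r_{\alpha,\beta}<0$, violating $\f{N}\in\z\Kfin$. So $\f{F}_{a_\alpha,a_\beta}$ must meet $(C_\alpha\cup C_\beta)\setminus\{a_\alpha,a_\beta\}$ for every pair.

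The main obstacle, and the heart of the argument, is to upgrade this forced overlap into a strictly decreasing $\omega_1$-sequence of positive reals in $(0,1)$. I would refine to an uncountable $I\subseteq\omega_1$ on which the pointed structures $(C_\alpha,a_\alpha)$ are all of a single isomorphism type (possible since $K_0$ is countable up to isomorphism) and on which the family $\{C_\alpha:\alpha\in I\}$ forms a $\Delta$-system with finite root $R$. For each $\alpha\in I$, the requirement that every later pair $(a_\alpha,a_\beta)$, $\beta\in I$, force overlap inside $C_\alpha\cup C_\beta$ translates, after accounting for the freely-joined structure of the tent outside $\{a_\alpha,a_\beta\}$ and the zero-rank intersection structure on $R$, into the existence of a positive real $\rho_\alpha\in(0,1)$, expressible in terms of the $r_{\alpha,\beta}$ and elements of the countable subgroup of $\mathbb{R}$ generated by $\{1\}\cup\{\z\alpha_E:E\in L\}$. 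The careful stagewise choice of the $r_{\alpha,\gamma}<1/2^{\max(\alpha,\gamma)+2}$ will then force $(\rho_\alpha)_{\alpha\in I}$ to be strictly decreasing, giving the impossible sequence.

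For the final sentence, apply Lemma \ref{lem:CompletionsOfUniTh1} to $\f{X}$ to obtain $\f{M}\models\Sfin$ with $\f{X}\subseteq\f{M}$, $|M|=\aleph_1$, and $\f{M}$ inheriting finite closures from $\f{X}$. If $\f{M}$ embedded isomorphically into some atomic $\f{N}\models\Sfin$, then so would $\f{X}$, contradicting the previous paragraphs.
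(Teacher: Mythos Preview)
Your construction of the tent $\f{X}$ and the forced-overlap step are essentially correct and match the paper (modulo a notational slip: ``$1/2^{\max(\alpha,\gamma)+2}$'' makes no sense once $\alpha,\gamma$ are infinite ordinals; the paper instead, at each successor stage $\gamma$, \emph{enumerates} the countably many earlier points and uses drops bounded by $1/2^{n+1}$ along that enumeration, which is what you intend). Your computation that $\f{F}_{a_\alpha,a_\beta}\setminus\{a_\alpha,a_\beta\}$ must meet $C_\alpha\cup C_\beta$ is fine; the paper reaches the same conclusion by first observing that $\delta(C_\alpha\cup C_\beta)=0$, hence $C_\alpha\cup C_\beta$ is closed and already contains $\f{F}_{a_\alpha,a_\beta}$.

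The gap is in your contradiction step. The $\Delta$-system refinement is harmless, but the passage from ``forced overlap'' to ``a positive real $\rho_\alpha$'' is not an argument: you never define $\rho_\alpha$, and there is no mechanism offered by which the bounds on the $r_{\alpha,\beta}$ would make such a sequence strictly decreasing along an uncountable set. Nothing in the setup produces a canonical real attached to $\alpha$ that the later overlaps are forced to push down. As written, this is a plan rather than a proof, and I do not see how to complete it along the lines you sketch.

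The paper's actual argument is much more elementary and bypasses real numbers entirely. From the forced overlap, for each fixed $\beta$ the set $C_\beta=\icl_{\f N}(\{a_\beta\})$ is finite, while the sets $\f{F}_{a_\beta,a_\gamma}\setminus\{a_\beta,a_\gamma\}$ for distinct $\gamma$ are pairwise disjoint (by the tent's free-join conditions). Hence only finitely many $\gamma$ can satisfy $(\f{F}_{a_\beta,a_\gamma}\setminus\{a_\beta,a_\gamma\})\cap C_\beta\neq\emptyset$; let $g(\beta)$ bound them. A standard catch-your-tail argument produces a countable limit $\beta'<\omega_1$ closed under $g$. Now pick any $\gamma>\beta'$: for every $\beta<\beta'$ the overlap cannot land in $C_\beta$, so it must land in $C_\gamma$. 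But there are infinitely many such $\beta$, and the corresponding sets $\f{F}_{a_\beta,a_\gamma}\setminus\{a_\beta,a_\gamma\}$ are pairwise disjoint and nonempty, which is impossible inside the finite set $C_\gamma$. That is the whole contradiction; no $\Delta$-system and no decreasing sequence of reals is needed. Your final paragraph, deducing the existence of $\f{M}\models\Sfin$ via Lemma~\ref{lem:CompletionsOfUniTh1}, is correct and matches the paper.
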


\begin{proof}
	Let $\f{X}\models\Sfin^\forall$ be tent-like over $\f{Y}$ where $Y=\{a_i : i<\omega_1 \}$. We claim that there is no $\f{N\supseteq{X}}$ such that $\f{N}$ is an atomic model of $\Sfin$. 
	
	Assume to the contrary that there is such a $\f{N}$. Now for any $a_{\beta}$, $\icl_{\f{N}}(\{a_\beta\})$ would be finite and $\delta(\icl_{\f{N}}(\{a_\beta\}))=0$ by use of Theorem \ref{thm:ClassAtModMain}. Note that for $\beta, \gamma$ distinct, $\f{F}_{\beta,\gamma}\subseteq\icl_{\f{N}}(\{a_\beta, a_\gamma\} )$. Now either $(\f{F}_{\beta,\gamma}-\{a_\beta, a_\gamma\})\cap\icl_{\f{N}}(\{a_\beta\})  \neq \emptyset$ or $(\f{F}_{\beta,\gamma}-\{a_\beta a_\gamma \})\cap \icl_{\f{N}}(\{a_\beta\}) \neq \emptyset$. For if not \[
	\begin{array}{lcl}
	\delta(\icl_{\f{N}}(\{a_\beta\})\icl_{\f{N}}(\{a_\gamma\})) & = & \delta(\icl_{\f{N}}(\{a_\beta\} ))+\delta(\icl_{\f{N}}(\{a_\gamma\}))-\delta(\icl_{\f{N}}(\{a_\beta\}) \cap\icl_{\f{N}}(\{a_\gamma\} ))\\
	
	& & -e(\icl_{\f{N}}(\{a_\beta\})-\icl_{\f{N}}(\{a_\gamma\}), \icl_{\f{N}}(\{a_\gamma\} )-\icl_{\f{N}}(\{a_\beta\} ))
	\end{array}
	\] 
	
	by use of $(1)$ of Fact \ref{lem:BasicDel1}). This implies that $\delta(\icl_{\f{N}}(\{a_\beta\} )\icl_{\f{N}}(\{a_\gamma\}))=0$. But then $\icl_{\f{N}}(\{a_\beta\})\icl_{\f{N}}(\{a_\gamma\})$ is closed. Thus we obtain that, $(\f{F}_{\{a_\beta,a_\gamma\}}-\{a_\beta a_\gamma\})\subsetneq\icl_{\f{N}}(\{a_\beta a_\gamma\})\subseteq{\icl_{\f{N}}(\{a_\beta\})\icl_{\f{N}}(\{a_\gamma\})}$, a contradiction.  
	
	Now for each $\beta$, $\icl_{\f{N}}(\{a_\beta\})$ is finite. Thus there is some $\beta^*>\beta$ such that \[\icl_{\f{N}}(\{a_\beta\})\cap{(\f{F}_{\{a_\beta,a_\gamma\}}-\{a_\beta, a_\gamma\})}=\emptyset\] for all $\gamma>\beta^*$. But now by doing a standard catch your tail argument,  we can find $\beta'<\omega_1$ such that for all $\beta<\beta'$, if $\icl_{\f{N}}(\{a_\beta\})\cap{(\f{F}_{\{a_\beta,a_\gamma\}}-\{a_\beta, a_\gamma\})}\neq\emptyset$, then $\gamma<\beta'$. Choose $\gamma>\beta'$. For all $\beta<\beta'$, $\icl_{\f{N}}(\{a_\beta\})\cap{(\f{F}_{\{a_\beta,a_\gamma\}}-\{a_\beta, a_\gamma\})}=\emptyset$. Hence  $\icl_{\f{N}}(\{a_\gamma\})\cap{(\f{F}_{\{a_\beta,a_\gamma\}}-\{a_\beta, a_\gamma\})}\neq\emptyset$. But this is contradictory as $\icl_{\f{N}}(\{a_\gamma\})$ is finite and the $\f{F}_{\{a_\beta,a_\gamma\}}-\{a_\beta a_\gamma\}$ are distinct non-empty sets.
	
	We can do an easy chain argument argument to show that there is some $\f{X\subseteq M}$ and $\f{M}\models\Sfin$. Clearly no such $\f{M}$ embeds into an atomic model as otherwise, $\f{X}$ would to. This finishes the proof.
\end{proof}

We finally finish with Theorem \ref{thm:RationalAlphaAndCoherence}, which shows that when $\z\alpha$ is coherent, $\z\alpha_E$ being rational for all $E\in L$ can be characterized in terms of isomorphic embeddability into atomic models.   

\begin{thm}\label{thm:RationalAlphaAndCoherence}
	Let $\z\alpha$ be coherent. The following are equivalent \begin{enumerate}
		\item $\z\alpha$ is rational
		\item Every $\f{M}\models\Sfin$ embeds isomorphically into an atomic model of $\Sfin$
	\end{enumerate}	
\end{thm}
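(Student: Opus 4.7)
The proof will be a direct assembly of the two preceding lemmas, so the plan is essentially to verify that they combine into the stated biconditional and to note why no extra ingredients are needed.

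For the direction (1) $\Rightarrow$ (2), I would appeal to Lemma \ref{lem:Atom=ECForRW}. That lemma already asserts precisely: when $\z\alpha$ is rational, every model of $\Sfin$ embeds isomorphically into an atomic model of $\Sfin$. So there is nothing further to do; I would simply cite it. The underlying mechanism, which is worth recalling in the write-up, is that rationality forces $\Sfin$ to have finite closures (granularity is eventually constant at $1/c$ by Lemma \ref{lem:granularity}), so every $\forall\exists$-extension to an existentially closed model automatically produces an atomic model via Theorem \ref{thm:ClassAtModMain}.

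For the contrapositive direction (2) $\Rightarrow$ (1), I would argue that if $\z\alpha$ is coherent but \emph{not} rational, then (2) fails. This is exactly the content of Lemma \ref{lem:CoherenceWithIrrational}, which produces an $\f{M}\models\Sfin$ of size $\aleph_1$ that does not embed isomorphically into any atomic model. The construction there starts from a tent-like $\f{X}\models\Sfin^{\forall}$ over a base of size $\aleph_1$ (via iterated application of Lemma \ref{lem:constTent}), extends $\f{X}$ to a model $\f{M}$ of $\Sfin$ by a chain argument, and uses the pigeonhole/catch-your-tail argument exploiting that in an atomic model each $\icl_{\f{N}}(\{a_\beta\})$ is finite while the distinct sets $\f{F}_{\{a_\beta,a_\gamma\}}-\{a_\beta,a_\gamma\}$ are uncountably many. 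Hence (2) implies $\z\alpha$ is rational.

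Thus the proof just reads: $(1) \Rightarrow (2)$ is Lemma \ref{lem:Atom=ECForRW}, and $(2) \Rightarrow (1)$ is the contrapositive of Lemma \ref{lem:CoherenceWithIrrational}. The main obstacle, already resolved, was Lemma \ref{lem:CoherenceWithIrrational} itself; in particular, the fact that the essential minimal pairs used to build the tent can be made to have relative rank arbitrarily close to $0$ (so that the geometric series $\sum 1/2^{n+1}$ bound works) relies on Theorem \ref{thm:OmitPrelim2}, which genuinely uses irrationality of some $\z\alpha_E$. Without irrationality, the construction collapses because relative rank drops would be bounded below by $1/c$, and indeed this is consistent with the biconditional since under rationality (2) holds. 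No new lemma needs to be proven; I would simply write the two-line proof citing the lemmas.
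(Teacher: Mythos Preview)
Your proposal is correct and matches the paper's own proof exactly: the paper simply states that the result is immediate from Lemma \ref{lem:Atom=ECForRW} and Lemma \ref{lem:CoherenceWithIrrational}. Your additional commentary on the underlying mechanisms is accurate and would serve well as motivation, but the formal proof is indeed just the two citations you describe.
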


\begin{proof}
	The proof of this statement is immediate from Lemma \ref{lem:Atom=ECForRW} and Lemma \ref{lem:CoherenceWithIrrational}.
\end{proof}

\begin{remark}\label{rmk:GraphLifeWithWeightOne}
	Theorem \ref{thm:OmitPrelim2} fails for the case that for each $E$ in $L$, arity of $E$ is $2$ and  $\z\alpha(E)=1$. However it is possible to obtain the quantifier elimination result and the results about atomic models by proving Theorem \ref{thm:AnnhilConstruc} and Lemma \ref{lem:Omit1}. These results require ad hoc constructions that apply only to this specific case. We have omitted this case in favor of a more streamlined presentation of the results. 
\end{remark}

\appendix 

\section{Some Relevant Number Theoretic Facts}\label{App:NumTh}

The number theoretic results concerning Diophantine equations can be found in Chapter 5 of \cite{IZH} and the number theoretic results concerning continued fractions can be found in Chapter 7 therein.

\begin{remark}\label{rmk:InfSolDioEqn}
	We note that in the case all the $\z\alpha_E$ are rational the equation $n-{\sum_{E\in L}{\z\alpha_Em_E}}=-\frac{1}{c}$ has infinitely many positive integer solutions, i.e. solutions where $n$ and all of the $m_E$ are positive. To see this, note that we can relate the solutions of such an equation to the solutions of a linear Diophantine equation: Given a fixed $E'\in{L}$, let $\prod_{\hat{E'}}(q_E)$ be the product of the $q_{E}$ for $E\in{L-\{E'\}}$.  Now using this fact that $c=\frac{\prod_{E\in L}{q_E}}{\gcd_{L\in E}{q_E}}$ and multiplying through by $\prod{q_E}$ we obtain the equation:  $n\prod_{E\in L}{q_E} - \sum_{E\in L}(m_Ep_E\prod_{\hat{E}}(q_{E^*})) = -\gcd_{E\in L}(q_E)$.
	
	We can now show that $\gcd_{E\in L}(\prod{q_E}, p_E\prod_{\hat{E}}(q_{E'}))=\gcd_{E\in L}(q_E)$ by repeated use of $\gcd(ab,af)=a\gcd(b,f)$, $\gcd(a_1,\ldots,a_n)=\gcd(\gcd(a_1,a_2),\ldots{a_n})$ and the fact that for relatively prime $\gcd(b,af)=\gcd(b,a)$. Thus the above equation has infinitely many solutions. In particular it has infinitely many solutions $\langle n,m_E\rangle_{E\in L}$ where $n,m_E>0$ for all $E\in L$.
\end{remark}

\begin{remark}\label{rmk:ContinuedFractions}
	Let $0 < \beta < 1$ be irrational. Note that $\beta$ has a simple continued fraction form $[0:a_1,a_2,\ldots]=0+1\frac{1}{a_1+\frac{1}{a_2+\cdots}}$ where $a_i\in\omega$ is positive for $i\geq 1$. Let $p_k/q_k=[0:a_1,\ldots,a_k]$ be the simple continued fraction approximation restricted to $k$-terms. Now: 
	\begin{enumerate}
		\item $p_k, q_k$ are increasing sequences (and hence $p_k,q_k\rightarrow\infty$)
		\item $\langle p_{2k}/q_{2k} : k\in\omega \rangle$ is a strictly increasing sequence that converges to $\beta$
		\item For even $k$, $\frac{1}{q_{k}(q_{k}+q_{k+1})} < \beta -\frac{p_k}{q_k} < \frac{1}{q_k q_{k+1}} $
	\end{enumerate}
	
	Now it follows that $ -\frac{1}{q_{2k}} < p_{2k}-q_{2k}\beta < -\frac{1}{q_{2k}+q_{2k+1}}$. This easily yields that $\lim_{k}p_{2k}-q_{2k}\beta=0$.
\end{remark}

\bibliographystyle{plain}
\bibliography{AtomMod}
\Addresses
\end{document}